\newcommand{\rs}{\mathord{\upharpoonright}}
\newcommand{\e}{\varepsilon}
\newcommand{\NN}{\mathbb{N}}
\newcommand{\zet}{\mathbb{Z}}
\newcommand{\er}{\mathbb{R}}
\newcommand{\ce}{\mathbb{C}}
\newcommand{\bb}{\mathfrak{b}}
\newcommand{\Cstar}{\mathrm{C^*}}
\newcommand{\cstar}{$\mathrm{C}^*$}
\newtheorem{theorem}{Theorem}[section]
\newtheorem*{theorem*}{Theorem}
\newtheorem{proposition}[theorem]{Proposition}
\newtheorem*{proposition*}{Proposition}
\newtheorem{lemma}[theorem]{Lemma}
\newtheorem*{lemma*}{Lemma}
\newtheorem{corollary}[theorem]{Corollary}
\newtheorem*{corollary*}{Corollar}
\newtheorem*{fact*}{Fact}
\theoremstyle{definition}
\newtheorem{defin}[theorem]{Definition}
\newtheorem*{defin*}{Definition}
\newtheorem{claim}[theorem]{Claim}
\newtheorem*{claim*}{Claim}
\newtheorem*{conjecture*}{Conjecture}
\newtheorem{theoremi}{Theorem}
\theoremstyle{remark}
\newtheorem*{example*}{Example}
\newtheorem{remark}[theorem]{Remark}
\newtheorem*{remark*}{Remark}
\newtheorem*{note*}{Note}
\newtheorem*{question*}{Question}
\newcommand{\norm}[1]{\left\lVert #1 \right\rVert}
\DeclareMathOperator{\diag}{diag}
\DeclareMathOperator{\coker}{coker}
\DeclareMathOperator{\Span}{span}
\DeclareMathOperator{\supp}{supp}
\DeclareMathOperator{\id}{id}
\DeclareMathOperator{\rank}{rank}
\DeclareMathOperator{\Ad}{Ad}
\DeclareMathOperator{\Obj}{Obj}
\DeclareMathOperator{\Mor}{Mor}
\newcommand{\Fraisse}{Fra\"{i}ss\'e }
\title{Stably projectionless Fra\"iss\'e limits}
\author[B. Jacelon]{Bhishan Jacelon}
\address[B. Jacelon]{
Institute of Mathematics of the Czech Academy of Sciences\\ \v{Z}itn\'{a} 25\\115 67 Prague 1\\Czech Republic}
\email{bjacelon@gmail.com}
\author[A. Vignati]{Alessandro Vignati}
\address[A. Vignati]{
Universit\'e de Paris - Institut de Math\'ematiques de Jussieu - Paris Rive Gauche (IMJ-PRG)\\
B\^atiment Sophie Germain\\
8 Place Aur\'elie Nemours\\ Paris, 75013, France}
\email{ale.vignati@gmail.com}
\urladdr{http://www.automorph.net/avignati}
\subjclass[2010]{03C30, 46L05}
\keywords{Fra\"iss\'e limits, stably projectionless, $\mathrm{C}^*$-algebras}
\thanks{}
\begin{document}

\maketitle
\setcounter{secnumdepth}{4}
\setcounter{tocdepth}{2}

\date{\today}%
\begin{abstract}
We realise the algebra $\mathcal W$, the algebra $\mathcal Z_0$ and the algebras $\mathcal Z_0\otimes A$, where $A$ is a unital separable UHF algebra as \Fraisse limits of suitable classes of structures. In doing so, we show that such algebras are generic objects without the use of any classification result.
 \end{abstract}
 
\maketitle
\section{Introduction}

The notions of \Fraisse classes and \Fraisse limits were originally introduced by \Fraisse in \cite{Fraisse}, as a method to construct countable homogeneous structures. Since then, \Fraisse theory has become an influential area of mathematics at the crossroads of combinatorics and model theory. Broadly speaking, \Fraisse theory studies the correspondence between homogeneous structures and properties of the classes of their finitely generated substructures. In the discrete setting, given a countable structure, its age is the collection of its finitely generated substructures. Ages of homogeneous structures are precisely \Fraisse classes. Conversely, given a \Fraisse class $\mathcal K$, one constructs a countable homogeneous structure with the given class as its age. This
structure is the \Fraisse limit of the class. It is unique up to isomorphism and is often referred to as the \emph{generic} structure one gets from $\mathcal K$.

Many interesting objets in group theory, graph theory, and topology were identified as \Fraisse limits (see for example \cite[Chapter 7]{Hodges1993} and \cite{IrwSol.Pseudo}). Connections with Ramsey theory and topological dynamics, leading to the study of extreme amenability of the automorphism group of \Fraisse limits, were exploited in \cite{KPT}.

After an early approach in \cite{Schoretsanitis}, \Fraisse theory for continuous structures was developed systematically in \cite{BY:Fraissee}, where it was notably shown that the Urysohn space is a \Fraisse limit of the class of finite metric spaces. Another object of pivotal importance recognised as a \Fraisse limit is the Gurarij Banach space, see \cite{KubisSol.Gurarij}. The `continuous' correspondent of the main result of \cite{KPT} was proved in \cite[Theorem 3.10]{melleray2014extremely}.

\Fraisse theory was brought to the setting of \cstar-algebras in \cite{EFH:Fraisse}. When studying such objects, one often has to consider classes which are not closed under substructures, the reason for this being that the class of finitely generated substructures of a given \cstar-algebra is often quite large and intractable (conjecturally, all simple and separable C*-algebras are singly generated, see e.g., \cite{ThielWinter}). This phaenomenon translates to the \Fraisse class not having the Hereditary Property. Yet, we consider classes made of reasonably `small' and `tractable' algebras, which will form a `skeleton' in the age of the \Fraisse limit. The price one has to pay in this case is that one only obtains that the \Fraisse limit is homogeneous only for certain maps from the building blocks into the \Fraisse limit.

For us, a \Fraisse class is a category $\mathcal K$ with objects $\Obj_{\mathcal K}$ and morphisms $\Mor_{\mathcal K}$. The objects of $\mathcal K$ are finitely generated metric $\mathcal L$-structures, and the morphisms are $\mathcal L$-embeddings, where $\mathcal L$ is a language for metric structures in continuous model theory. We ask for $\mathcal K$ to satisfy certain combinatorial properties (see \S\ref{SS.Fraisse} for the specifics). The most important among these are the Joint Embedding Property (JEP) and the Near Amalgamation Property (NAP). The JEP asks that every two objects of $\mathcal K$ embed into a third one via maps in $\Mor_\mathcal K$, while our amalgamation property NAP asks that objects in $\mathcal K$ are local amalgamation bases, at least when diagrams are restricted to $\mathcal K$: $\mathcal K$ satisfies the NAP if whenever we are given objects $A$, $B$, and $C$, morphisms $\varphi_1\colon A\to B$ and $\varphi_2\colon A\to C$, a finite set $F\subset A$ and $\e>0$, then we can find an object $D$ and morphisms $\psi_1\colon B\to D$ and $\psi_2\colon C\to D$ such that 
\[
d(\psi_1\circ\varphi_1(a),\psi_2\circ\varphi_2(a))<\e \text{ for all } a\in F,
\]
$d$ being the metric on $D$. The NAP is usually the more technical property to prove, but the interesting one as it gives homogeneity properties to the generic inductive limit of a \Fraisse class, its \Fraisse limit. In the setting of \cstar-algebras, the JEP takes the role of `local existence', while the NAP takes that of `local uniqueness'.

Notably, the authors of \cite{EFH:Fraisse} showed that the Jiang--Su algebra $\mathcal Z$ and the UHF algebras of infinite type are \Fraisse limits of suitable \Fraisse classes. Masumoto, in \cite{Masumoto.FraisseZ,Masumoto.Fraisse2} and \cite{Masumoto.Real}, obtained the same results with a `by hand' approach, not relying on any classification theory. Ghasemi, in \cite{Ghasemi.Fraisse}, further analysed the connections between \Fraisse theory and strongly self-absorbing \cstar-algebras to give a self-contained and rather elementary proof for the well known fact that $\mathcal Z$ is strongly self-absorbing.

Both Jiang and Su's $\mathcal Z$ and UHF algebras of infinite type are examples of \cstar-algebras of fundamental importance in the classification programme. In particular, $\mathcal Z$ can be viewed as the (stably finite) infinite dimensional version of the complex numbers $\mathbb C$; $\mathcal Z$ plays a pivotal role in the classification of infinite-dimensional simple separable nuclear \cstar-algebras, where tensorial absorption of $\mathcal Z$ is proved to be equivalent to a finite-dimensionality condition (\cite{CETWW.Class} and \cite{CE.ClassNonunit}). 

In this paper, we focus on $\mathcal Z$'s nonunital twins, the algebras $\mathcal W$ and $\mathcal Z_0$. These algebras are \emph{the} simple, infinite-dimensional, amenable, stably projectionless algebras with unique (bounded) trace whose $K$-theory is as simple as possible. $\mathcal W$ has trivial $K$-theory, while the $K$-theory of $\mathcal Z_0$ equals that of $\mathbb C$. The algebra $\mathcal W$ was defined by the first author in \cite{Jacelon:W}, following the pioneering work of Razak \cite{Razak}, who identified a class of nonunital separable stably finite and stably projectionless simple nuclear \cstar-algebras with trivial $K$-theory, and classified them using their tracial information. $\mathcal W$ was defined to mimic the properties of $\mathcal Z$, namely in the attempt of defining a `strongly self-absorbing' nonunital \cstar-algebra. Strong self-absorption is a property asserting that $A\cong A\otimes A$ in quite a strong way (see \cite{Toms:2007uq}). That $\mathcal W\cong\mathcal W\otimes\mathcal W$ was proved only recently using the classification tools of \cite{EllGongLinNiu.KK} (see also \cite{Nawata:2020vt}). 

The algebra $\mathcal Z_0$ may be seen as yet another nonunital version of $\mathcal Z$. For classification purposes, $\mathcal Z_0$ plays for nonunital algebras the role $\mathcal Z$ does for unital ones (compare, for example, \cite[Theorem 1.2]{GongLin.Class2} with \cite[Corollary E]{TWW.QD} and the main result of \cite{GongLinNiu}). 

We identify these two important objects as \Fraisse limits.

\begin{theoremi}\label{thmi:main}
The algebras $\mathcal W$ and $\mathcal Z_0$ are \Fraisse limits of suitable classes. Moreover, if $A$ is a UHF algebra of infinite type, the algebra $\mathcal Z_0\otimes A$ is a \Fraisse limit.
\end{theoremi}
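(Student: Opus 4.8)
The plan is to realise each of $\mathcal W$, $\mathcal Z_0$, and $\mathcal Z_0\otimes A$ as the limit of an explicit \Fraisse category whose objects are finitely presented building blocks of Razak--Jacelon type, and then to identify the limit by its abstract characterisation (simple, separable, nuclear, stably projectionless, unique trace, prescribed $K$-theory, plus the appropriate absorption/classification-free uniqueness coming from homogeneity). For $\mathcal W$ the natural skeleton is the class of Razak blocks --- the dimension-drop-like building blocks $A(m,n)$ with vanishing $K$-theory used in \cite{Razak} and \cite{Jacelon:W} --- equipped as morphisms with (approximately) trace-preserving $\mathcal L$-embeddings, where $\mathcal L$ is a language adapted to $\Cstar$-algebras with a distinguished Cuntz-semigroup/trace structure (as in \cite{EFH:Fraisse}, but nonunital, so one works with the metric structure on the unit ball together with the positive cone and traces). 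For $\mathcal Z_0$ one enlarges the skeleton so that the building blocks carry the $K_0$-group $\zet$ (Razak-type blocks over circles, i.e.\ the $\mathcal Z_0$-building blocks of \cite{GongLin.Class2}/\cite{Jacelon:W}); for $\mathcal Z_0\otimes A$ one further tensors everything with matrix algebras whose sizes run through the supernatural number of $A$.

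First I would set up the language and verify the elementary bookkeeping: the objects are finitely generated, the hom-sets $\Mor_{\mathcal K}(A,B)$ form a complete metric space in the point-norm topology, composition is continuous, and the class is essentially countable and closed under the operations needed in \S\ref{SS.Fraisse} (the relevant Polishness/separability and the weak Cauchy condition on chains). Next, and this is the crux, I would prove the JEP and the NAP for each class. The JEP is comparatively soft: given two building blocks, one maps both trace-preservingly into a larger block of the same type, using that Razak-type blocks absorb one another up to the natural $K$-theoretic and tracial constraints --- this is essentially an existence statement for $\ast$-homomorphisms between building blocks realising a prescribed effect on traces (and on $K_0$ in the $\mathcal Z_0$ case), of the kind isolated by Razak and used throughout the classification of these algebras. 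The NAP is the hard part: given $\varphi_1\colon A\to B$, $\varphi_2\colon A\to C$, a finite $F\subset A$ and $\e>0$, I must amalgamate $B$ and $C$ over $A$ up to $\e$ on $F$ inside another building block. The strategy is the standard one for stably finite \Fraisse classes: approximate $\varphi_1,\varphi_2$ by maps that are ``diagonal'' in a suitable sense, reduce to matching their tracial (and $K_0$-)data, and then use an Elliott-style intertwining-by-one-step / existence-and-uniqueness lemma for maps between Razak blocks to build $D$ together with $\psi_1,\psi_2$ --- the uniqueness ingredient being that two trace-preserving (and $K_0$-compatible) embeddings of a building block into a building block are approximately unitarily equivalent, which for these blocks is provable directly (this is exactly the ``by hand'' content that lets one bypass \cite{EllGongLinNiu.KK}).

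With JEP and NAP in hand, the general \Fraisse machinery from \S\ref{SS.Fraisse} produces a limit structure $\M_{\mathcal W}$ (resp.\ $\M_{\mathcal Z_0}$, $\M_{\mathcal Z_0\otimes A}$) that is separable, that is the closure of an increasing chain of building blocks, and that is homogeneous for maps from the skeleton. It remains to identify this limit. I would (i) read off from the chain that the limit is nuclear, stably projectionless, has a unique trace (the maps are trace-preserving, so the trace simplex is a singleton in the limit), is simple (a genericity/absorption argument: every nonzero hereditary subalgebra swallows a building block, using NAP), and has the stated $K$-theory (continuity of $K$-theory along the chain: $0$ for $\mathcal W$, $(\zet,0)$ for $\mathcal Z_0$, and $K_*(A)$ with the right order unit for $\mathcal Z_0\otimes A$); and (ii) invoke the known abstract classification-type characterisations of $\mathcal W$ (Razak/Jacelon), of $\mathcal Z_0$, and of $\mathcal Z_0\otimes A$ by these invariants --- which we are permitted to use as external input since the novelty claimed is the \Fraisse-theoretic construction, not a new classification theorem. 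The expected main obstacle is the uniqueness half of the NAP: making precise the ``approximately unitarily equivalent'' statement for embeddings of nonunital stably projectionless building blocks, uniformly enough that it can be fed into the amalgamation square with explicit control on $F$ and $\e$; this is where essentially all of the $\Cstar$-algebraic work lies, and where the nonunital, stably projectionless setting (no $\mathbf{1}$, Cuntz comparison in place of Murray--von Neumann, traces unbounded on the stabilisation) makes the arguments of \cite{EFH:Fraisse} genuinely need redoing rather than merely citing.
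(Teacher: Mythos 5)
There is a genuine gap in the core of your NAP argument. You assert as the ``uniqueness ingredient'' that two trace-preserving (and $K_0$-compatible) embeddings of a building block into a building block are approximately unitarily equivalent and that this is ``provable directly.'' This is false for Razak blocks: trace-preservation forces the \emph{push-forward} of the measure associated to the domain trace to agree, but says nothing pointwise about the eigenvalue patterns $\pi_{\varphi,t}$, $\pi_{\psi,t}$, which can be far apart in the optimal matching distance for two diagonal maps that nevertheless pull back the same trace (e.g., one coordinate function surjective and another nearly constant, with the discrepancy compensated in the average). The paper's mechanism for the NAP is not a uniqueness theorem at the block level but rather a \emph{diameter-control} device: it introduces the diameter $\partial(\varphi)$ of a diagonal map (Definition \ref{defin:diam}), shows that small-diameter trace-preserving diagonal maps are necessarily pointwise close in diagonal distance (Lemma \ref{lemma:rewriting2}), and then forces the compositions $\psi_i\circ\varphi_i$ to have arbitrarily small diameter by post-composing with the small-diameter maps of Proposition \ref{prop:bhishansmaps}/\ref{prop:genmaps} (this is Theorem \ref{T.smalldistance} and Theorem \ref{T.gensmalldistance}). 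Only after achieving pointwise closeness does one produce a single conjugating unitary in $\tilde D$, via a Thomsen-type argument run through Robert's combinatorial reduction of a Razak block to $C([0,1])$ (Lemma \ref{lemma:reduction} and Corollary \ref{cor:generalthomsen}). In the generalised case this must additionally be threaded through the $K_0$ obstruction at $\infty_1,\infty_2$ (Lemma \ref{lemma:KtheoryGen2}, Remark \ref{Rem.GenBad}), which your sketch does not address.

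The identification step is also not what the paper does, and for a stated reason: you propose to read off the invariants of the \Fraisse limit and then ``invoke the known abstract classification-type characterisations'' of $\mathcal W$ and $\mathcal Z_0$. That would give a correct proof of the bare statement, but the paper's explicit contribution (emphasised in the abstract and introduction, and again in Remark after Definition \ref{defin:W}) is to identify these algebras as generic objects \emph{without} any classification input. The identification in the paper proceeds instead by Theorem \ref{thm:generic}: one exhibits the defining inductive sequences of $\mathcal W$ (Definition \ref{defin:W}) and of $\mathcal Z_0$ (Definition \ref{defin:Z0}, with the carefully chosen odd multipliers $p_i$ feeding Corollary \ref{cor:onemore}) as \emph{generic} sequences for $\mathcal K_\mathcal W$ and $\mathcal K_1$, so the already-constructed limits $\mathcal W$ and $\mathcal Z_0$ \emph{are} the \Fraisse limits, no classification needed. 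Finally, a small factual slip: the $\mathcal Z_0$ building blocks $B_{n,k}$ are not ``Razak blocks over circles''; they are still subalgebras of $C([0,1],M_{2nk})$, with $K_0\cong\zet$ coming from the doubled boundary fibre (two points at infinity), not from a circle in the spectrum.
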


Our approach does not use any classification result. In particular we show, for the first time, that $\mathcal W$ has the properties of a generic object without the use of any classification tool. 

Our \Fraisse classes consist of Razak blocks and their generalised versions, together with a specified faithful diffuse trace; the morphisms we are interested in are trace preserving $^*$-homomorphisms. (Generalised) Razak blocks are subalgebras of algebras of the form $C([0,1],M_n)$ defined by certain boundary conditions at $0$ and $1$ (see \S\ref{subsection:blocks}). For $\mathcal Z_0$ and algebras of the form $\mathcal Z_0\otimes A$ where $A$ is a unital UHF algebra of infinite type, we also account for $K$-theoretic constraints. The upshot of such analysis is twofold: first, we manage to classify (by traces, and $K$-theoretic information) embeddings of Razak blocks (resp, generalised Razak blocks) into $\mathcal W$ (resp, $\mathcal Z_0$) without the use of any classification theory. Second, we start a promising model theoretic analysis of two pivotal objects such as $\mathcal W$ and $\mathcal Z_0$. Moreover, this is the first time the algebra $\mathcal Z_0$ has been explicitly expressed as an inductive limit of subhomogeneous building blocks. To the best of our knowledge, a similar approach was already present in unpublished work of Santiago, but the best picture of $\mathcal Z_0$ so far available in written form was the one sketched out in \cite[\S7]{GongLin.Class2}, where $\mathcal Z_0$ is realised as a limit of subalgebras of $C([0,1],\mathcal Q\otimes\mathcal Q)$, $\mathcal Q$ being the universal UHF algebra.

The key part of the proof of Theorem~\ref{thmi:main} is proving NAP for our classes, that is, local uniqueness. For this, we study certain distances between trace preserving $^*$-homomorphisms of (generalised) Razak blocks, measures, and sets, and how these interplay (see \S\ref{S.distances}). Through the notion of diameter (\S\ref{SS.Diameter}), we measure the amplitude of $^*$-homomorphisms obtained from continuous maps $[0,1]\to[0,1]$, and we show that obtaining maps with small diameters suffices for our scope. In particular, the idea is that maps of small diameter that pull back the same trace are pointwise unitarily close (this is what \S\ref{S.distances} amounts to). We then, in \S\ref{S.proof}, use a combinatorial argument due to Robert  (\cite[\S5]{Robert:2010qy}) to generalise a result of Thomsen (\cite{Thomsen:1992qf}), thereby obtaining a continuous conjugating unitary in the unitisation of a (generalised) Razak block.

The paper is structured as follows: \S\ref{S.Prelim} contains preliminaries; there, we introduce our classes of objects and their maps. In \S\ref{S.diagonal} we introduce diagonal maps, and show their basic properties; by proving the existence of diagonal maps between (generalised) Razak blocks, we show that our classes have the JEP. In \S\ref{S.distances} we introduce several distances between $^*$-homomorphisms, measures, and continuous maps $[0,1]\to[0,1]$, and we relate them to each other. Finally, \S\ref{S.proof} uses the previous sections to prove the NAP for our classes of interest, and contains the proof of our main result. Lastly, in Appendix~\ref{App.Maps} we deal with the issue of what kind of maps one obtains homogeneity for, hereby answering a question of Masumoto from \cite{Masumoto.Fraisse2}.

\subsection*{Acknowledgements} The first ideas that led to this work were developed when both authors were at the Fields Institute in Toronto in 2017. This work continued with the first author's visit to the second one at the Institut de Math\'ematiques de Jussieu-Paris Rive Gauche (IMJ-PRG), in 2018, funded by the second author's PRESTIGE grant. The authors would like to thank both institutions. The first author is supported by the GA\v{C}R project 20-17488Y and RVO: 67985840. The second author is supported by an ANR grant (ANR-17-CE40-0026) and an Emergence en Recherche IdeX grant from the Universit\'e de Paris.

\section{Preliminaries}\label{S.Prelim}
\subsection{\Fraisse classes}\label{SS.Fraisse}
We work in the setting of continuous model theory for metric structures and fix a language for metric structures $\mathcal L$. In our applications, we will work in the language of \cstar-algebras $\mathcal L_{\Cstar}$ (see \cite{FHS.II}, or \cite{bourbaki}). An $\mathcal L$-class $\mathcal K$ consists of 
\begin{itemize}
\item $\Obj_{\mathcal K}$, the objects of $\mathcal K$, which are finitely generated $\mathcal L$-structures, and,
\item for every $A,B\in\Obj_{\mathcal K}$, a set $\Mor_{\mathcal K}(A,B)$ of $\mathcal L$-embeddings, the morphisms of $\mathcal K$. 
\end{itemize}

\begin{defin}
An $\mathcal L$-class $\mathcal K$ is said to have 
 
\begin{itemize}
 \item the \emph{joint embedding property} (JEP) if for all $A_1,A_2\in\Obj_{\mathcal K}$ there is $B\in\Obj_\mathcal K$ and $\alpha_i\in\Mor_{\mathcal K}(A_i,B)$; 
 \item the \emph{near amalgamation property} (NAP) if for all $A, B_1,B_2\in\Obj_{\mathcal K}$, for each finite  $F\subset A$, $\varepsilon>0$ and $\alpha_i\in\Mor_{\mathcal K}(A,B_i)$ there is $C\in\Obj_{\mathcal K}$ and morphisms $\beta_i\in\Mor_{\mathcal K}(B_i,C)$ with 
 \[
 d(\beta_1\circ\alpha_1(f),\beta_2\circ\alpha_2(f))<\varepsilon, \,\, f\in F.
 \]
 \end{itemize}
 Let $\mathcal K_n$ be the set formed by pairs $(A,\bar a)$ where $A\in\Obj_\mathcal K$ and $\bar a\in A^n$ generates $A$ as an $\mathcal L$-structure. For $(A_1,\bar a_1),(A_2,\bar a_2)\in\mathcal K_n$ define
 \[
 d^\mathcal K((A_1,\bar a_1),(A_2,\bar a_2))=\inf_{B\in\Obj_{\mathcal K}, \alpha_i\in\Mor(A_i,B)}d(\alpha_1(\bar a_1),\alpha_2(\bar a_2)).
 \]
 If $\mathcal K$ has JEP and NAP, $d^{\mathcal K}$ is a pseudo-metric. We say that $\mathcal K$ has
 \begin{itemize}
 \item the \emph{weak Polish property} (WPP) if each $\mathcal K_n$ is separable in the topology generated by $d^\mathcal K$;
 \item the \emph{Cauchy continuity property} (CCP) if for all $n,m\in\NN$ and $n$-ary $\mathcal L$-predicates $P$ and $m$-ary $\mathcal L$-functions $f$ we have that 
\[
 (A,\bar a,\bar b)\mapsto P^A(\bar a)\,\,\,\,\text{ and } (A,\bar a,\bar b)\mapsto (A,\bar a,\bar b,f^A(\bar a)) 
\]
send Cauchy sequences in $\mathcal K_{n+m}$ to Cauchy sequences in $\er$ and $\mathcal K_{n+m+1}$ respectively.
 \end{itemize}
\end{defin}

\begin{remark}\label{rem:CCP}
If $\mathcal L$ is the language of tracial \cstar-algebras then CCP is automatic: all functions and predicates in the language are $1$-Lipschitz.
\end{remark}

\begin{defin}
Let $\mathcal L$ be a separable language of metric structures and $\mathcal K$ be an $\mathcal L$-class. If $\mathcal K$ satisfies JEP, NAP, WPP and CCP, $\mathcal K$ is called a \emph{\Fraisse class}.
\end{defin}

If $\mathcal K$ is an $\mathcal L$-class, $A_i\in\Obj_{\mathcal K}$, $i\in\NN$, and $\varphi_i\in\Mor_{\mathcal K}(A_i,A_{i+1})$, the $\mathcal L$-structure 
\[
M=\lim (A_i,\varphi_i)
\] 
is called a $\mathcal K$-structure. We call $M$
\begin{itemize}
\item $\mathcal K$-\emph{universal} if every $A\in\Obj_{\mathcal K}$ can be $\mathcal K$-admissibly embedded in $M$ and
\item \emph{approximately }$\mathcal K$-\emph{homogeneous} if for every $A\in\Obj_{\mathcal K}$, $\varepsilon>0$, a finite $F\subset A$ and two $\mathcal K$-admissible embeddings $\alpha_1,\alpha_2\colon A\to M$ there is a $\mathcal K$-admissible isomorphism $\varphi\colon M\to M$ such that 
\[
d(\varphi\circ\alpha_2(f),\alpha_1(f))<\varepsilon, \,\, f\in F.
\]
\end{itemize}
\begin{defin}\label{D.Fraisse}
Let $\mathcal L$ be a separable language of metric structures and let $\mathcal K$ be a \Fraisse class. A $\mathcal K$-structure which is $\mathcal K$-universal and approximately $\mathcal K$-homogeneous is called a \emph{\Fraisse limit} of $\mathcal K$. 
\end{defin}

\begin{remark}\label{remark:admissible}
The definition of $\mathcal K$-admissible map above is quite technical. The need of this technical restriction on `allowed' morphisms from objects in $\mathcal K$ to $\mathcal K$-structures is due to the absence of the Hereditary Property. This absence is usually irrelevant in the discrete setting (for instance the classes considered by Irwin and Solecki in \cite{IrwSol.Pseudo} do not have the Hereditary Property), but it creates technical issues in the continuous setting (see, e.g., the introduction of \cite{Masumoto.Fraisse2}).

While not every embedding of $\mathcal K$-structures is $\mathcal K$-admissible according to the technical definition of Masumoto (\cite[Definition 3.1 (5)]{Masumoto.Fraisse2}), the class of $\mathcal K$-admissible maps is rich enough, and it has the following preservation properties:
\sloppy
\begin{itemize}
\item if $A,B\in\Obj_\mathcal K$ and $\varphi\colon A\to B$ is in $\Mor_\mathcal K$, then $\varphi$ is $\mathcal K$-admissible;
\item  if $A_i\in\Obj_\mathcal K$ and $\varphi_i\colon A_i\to A_{i+1}$ are elements of $\Mor_\mathcal K$ then $\varphi_{i,\infty}\colon A_i\to\lim(A_i,\varphi_i)$ given by $a\mapsto\lim_{j>i}\varphi_{ij}(a)$, where $\varphi_{ij}\colon A_i\to A_j$ equals $\varphi_j\circ\cdots\circ\varphi_{i}$, is $\mathcal K$-admissible;
\item if $A\in\Obj_\mathcal K$ and $B=\lim (B_i,\varphi_i)$ is a $\mathcal K$-structure, where $B_i\in\Obj_\mathcal K$ and $\varphi_i\in\Mor_\mathcal K$, then an $\mathcal L$-embedding $\psi\colon A\to B$ such that for all finite $F\subset A$ and $\varepsilon>0$ there are $n$ and $\varphi\colon A\to B_n$ with $\varphi\in\Mor_\mathcal K$ for which 
\[
\norm{\psi(a)-\varphi_{i,\infty}\circ\varphi(a)}<\varepsilon, \, \, a\in F,
\]
is $\mathcal K$-admissible;
\item if $A=\lim (A_i,\varphi_i)$ and $B=\lim(B_i,\psi_i)$ are $\mathcal K$-structures, and $\rho$ is an $\mathcal L$-embedding $\rho\colon A\to B$ such that for all $i$, for each finite $F\subset A_i$ and $\varepsilon>0$, there is $j$ and a $\tilde\rho\in\Mor_{\mathcal K}$ with $\tilde\rho\colon A_i\to B_j$ such that 
\[
\norm{\tilde\psi_j\circ\tilde\rho(a)-\rho\circ\varphi_{i,\infty}(a)}<\varepsilon, \,\,\, a\in F,
\]
then $\rho$ is $\mathcal K$-admissible.
\item Let $A,B$ be $\mathcal K$-structures and $\varphi\colon A\to B$ be an $\mathcal L$-embedding. Suppose that $A=\lim (A_i,\varphi_i)$. If $\varphi\restriction A_i\colon A_i\to B$ is $\mathcal K$-admissible for all sufficiently large $i$, then so is $\varphi$. 
\end{itemize}
\fussy

We will return to $\mathcal K$-admissible morphisms in Appendix \ref{App.Maps}, where we show that for our classes of interest all maps involved are admissible.
\end{remark}
\Fraisse limits exist, and they are unique:

 \begin{theorem}[{\cite[Theorem 3.15]{Masumoto.Fraisse2}}]\label{thm:Fraisse}
Let $\mathcal L$ be a separable language of metric structures and $\mathcal K$ be an $\mathcal L$-class. Then $\mathcal K$ satisfies JEP, NAP, WPP and CCP if and only if there exists a \Fraisse limit of $\mathcal K$. Such a limit is unique up to $\mathcal K$-admissible isomorphism.
\end{theorem}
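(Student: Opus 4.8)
The statement to prove is Theorem~\ref{thm:Fraisse}, the existence and uniqueness of \Fraisse limits. The plan is to establish this by the standard back-and-forth construction adapted to the continuous, non-hereditary setting.

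\medskip

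\noindent\textbf{Existence.} First I would assume JEP, NAP, WPP and CCP, and construct a \Fraisse limit directly. The key point is that WPP gives, for each $n$, a countable $d^\mathcal K$-dense set $D_n\subseteq\mathcal K_n$; taking the union over all $n$ yields a countable ``dense skeleton'' $\mathcal D$ of objects-with-generators inside $\mathcal K$. One then builds a chain $A_0\xrightarrow{\varphi_0}A_1\xrightarrow{\varphi_1}\cdots$ of objects and morphisms of $\mathcal K$ by a bookkeeping argument that interleaves two kinds of tasks. The universality tasks: using JEP, arrange that every $(A,\bar a)\in\mathcal D$ (hence, by density and CCP, every object of $\mathcal K$ up to arbitrarily small perturbation) admits a morphism into some $A_i$; combined with the CCP one upgrades ``approximate'' embeddings of limits of Cauchy sequences to honest \Fraisse-admissible embeddings of all of $\Obj_\mathcal K$ into $M:=\lim(A_i,\varphi_i)$. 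The homogeneity tasks: for each finite tuple of data $(A,\bar a)\in\mathcal D$, each rational $\e>0$, each pair of morphisms $\alpha_1,\alpha_2\colon A\to A_i$ from a countable dense set of such pairs, use NAP to find $A_j$ and $\psi_1,\psi_2\colon A_i\to A_j$ in $\Mor_\mathcal K$ with $d(\psi_1\alpha_1(a),\psi_2\alpha_2(a))<\e$ for $a$ in the relevant finite set, and fold $\psi_1$ into the chain. A standard enumeration ensures every task is eventually addressed. One then checks $M$ is approximately $\mathcal K$-homogeneous: given $\alpha_1,\alpha_2\colon A\to M$ admissible, approximate them by morphisms into some $A_i$, apply the homogeneity tasks cofinally to produce, by a back-and-forth alternation, mutually approximately inverse \emph{admissible} maps $M\to M$, and pass to the limit to get an admissible automorphism moving $\alpha_2$ to within $\e$ of $\alpha_1$ on $F$. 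The preservation properties of admissibility listed in Remark~\ref{remark:admissible} are exactly what is needed to see that the maps assembled as limits of morphisms of $\mathcal K$, and their back-and-forth limits, are admissible.

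\medskip

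\noindent\textbf{Uniqueness.} For the converse and for uniqueness, suppose $M$ and $N$ are both \Fraisse limits of $\mathcal K$. Write $M=\lim(A_i,\varphi_i)$ with $A_i\in\Obj_\mathcal K$, $\varphi_i\in\Mor_\mathcal K$. Using $\mathcal K$-universality of $N$, embed $A_0$ admissibly into $N$; then repeatedly use approximate $\mathcal K$-homogeneity of $N$ to extend, one building block at a time and up to controlled errors, to a coherent approximate embedding of the whole chain, and in the other direction use universality and homogeneity of $M$ to build an approximate embedding of a chain presenting $N$ back into $M$. Alternating these (the back-and-forth) with a summable sequence of error tolerances $\e_k$ produces a Cauchy sequence of partial maps whose limit is an $\mathcal L$-isomorphism $M\to N$; the admissibility preservation properties guarantee it is $\mathcal K$-admissible. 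Finally, the ``only if'' direction: if a \Fraisse limit $M$ exists, one extracts JEP and NAP for $\mathcal K$ from universality and approximate homogeneity of $M$ (any two objects embed into $M$, hence jointly into a single $A_i$ of a presenting chain, giving JEP; a standard argument with two embeddings of an amalgam base into $M$ and homogeneity yields NAP), while WPP follows because $M$ is a separable metric structure so each $\mathcal K_n$ embeds isometrically into a separable space via $d^\mathcal K$, and CCP is as in the hypotheses (automatic in our language by Remark~\ref{rem:CCP}).

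\medskip

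\noindent\textbf{Main obstacle.} The genuinely delicate point, and the reason this is not merely the classical \Fraisse argument, is the bookkeeping-with-errors in the non-hereditary continuous setting: one must ensure that the approximate maps produced by NAP at stage $k$, with tolerance $\e_k$, can be \emph{coherently} threaded through the inductive limit so that the accumulated errors $\sum_k\e_k$ stay summable and the limiting maps are honest isometric $\mathcal L$-embeddings; and one must verify at every stage that the maps so obtained fall within the class of $\mathcal K$-admissible morphisms, invoking precisely the closure properties collected in Remark~\ref{remark:admissible}. Since this theorem is quoted from \cite[Theorem 3.15]{Masumoto.Fraisse2}, I would in the write-up either reproduce this argument in the form just sketched or simply cite it, as the paper does.
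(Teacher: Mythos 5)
The paper does not prove this theorem: it is quoted verbatim from Masumoto \cite[Theorem 3.15]{Masumoto.Fraisse2}, and you correctly note this at the end. Your sketch of the back-and-forth existence and uniqueness argument (WPP to get a countable dense skeleton, JEP/NAP bookkeeping to build a generic chain, approximate homogeneity by coherent perturbation, uniqueness by alternating universality and homogeneity between two limits with summable errors) is the right outline of what the cited proof actually does, and you correctly identify the real technical point, namely that one must thread the NAP-errors coherently through the inductive limit and check at each stage that the assembled maps remain $\mathcal K$-admissible via the closure properties of Remark~\ref{remark:admissible}.

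One place where your sketch is too quick is the ``only if'' direction for CCP. You write that CCP is ``automatic in our language by Remark~\ref{rem:CCP},'' but that remark applies only to the specific language of tracial $\Cstar$-algebras used in this paper's applications; the abstract statement of the theorem concerns an arbitrary separable language $\mathcal L$, where CCP is a genuine hypothesis on the class $\mathcal K$, not a consequence of the language. In the converse direction one has to actually extract CCP from the existence of a \Fraisse limit $M$: roughly, one realises any Cauchy sequence in $\mathcal K_{n+m}$ as a sequence of tuples converging inside the separable structure $M$ (using universality and a diagonal argument), where continuity of predicates and functions in $M$ forces the required Cauchy behaviour of $P^A$ and $f^A$. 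As written, your sketch leaves this as a gap, but since the paper itself simply cites Masumoto for the whole theorem, citing is the intended resolution and your overall picture is consistent with the source.
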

The following is a simplification of \cite[Proposition 3.19]{Masumoto.Fraisse2}.
\begin{theorem}\label{thm:generic}
Let $\mathcal K$ be a \Fraisse class and $M=\lim(A_i,\varphi_i)$, where $A_i\in\Obj_\mathcal K$ and $\varphi_i\in\Mor_\mathcal K(A_i,A_{i+1})$. For $i<j$, let $\varphi_{i,j}=\varphi_{j-1}\circ\cdots\circ\varphi_i\colon A_i\to A_j$. Suppose that
\begin{itemize}
\item for every $C\in\mathcal K$ there is $i$ and $\varphi\in\Mor_{\mathcal K}(C,A_i)$, and
\item for all $i$, for each finite $F\subset A_i$, $\varepsilon>0$, $C\in\Obj_{\mathcal K}$ and $\psi\in\Mor_{\mathcal K}(A_i,C)$ there is $k$ and $\eta\in\Mor_{\mathcal K}(C,A_k)$ such that
\[
\norm{\eta\circ\psi(a)-\varphi_{i,k}(a)}<\varepsilon, \,\, a\in F.
\]
\end{itemize}
Then $M$ is the \Fraisse limit of $\mathcal K$.
\end{theorem}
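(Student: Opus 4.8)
The plan is to verify that $M$ satisfies the two defining conditions of a \Fraisse limit (Definition~\ref{D.Fraisse}): $\mathcal K$-universality and approximate $\mathcal K$-homogeneity. $\mathcal K$-universality costs nothing: given $C\in\Obj_{\mathcal K}$, the first hypothesis provides $i$ and $\varphi\in\Mor_{\mathcal K}(C,A_i)$, and then $\varphi_{i,\infty}\circ\varphi\colon C\to M$ is an $\mathcal L$-embedding of the exact shape required by the third item of Remark~\ref{remark:admissible} (the approximation is realised with zero error on every finite set), hence $\mathcal K$-admissible.

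For approximate $\mathcal K$-homogeneity I would isolate a single combinatorial ``move'' and then feed it into a standard back-and-forth. \emph{The move}: given $p\in\NN$, an object $B\in\Obj_{\mathcal K}$, a morphism $\mu\in\Mor_{\mathcal K}(A_p,B)$, a finite $G\subset A_p$, $\delta>0$ and $N\in\NN$, there are $p'>N$ and $\mu'\in\Mor_{\mathcal K}(B,A_{p'})$ with $\norm{\mu'\circ\mu(a)-\varphi_{p,p'}(a)}<\delta$ for all $a\in G$. To prove it, amalgamate (via the NAP) the two morphisms $\mu\colon A_p\to B$ and $\varphi_{p,N+1}\colon A_p\to A_{N+1}$ out of $A_p$, obtaining $C\in\Obj_{\mathcal K}$ with $\beta\colon B\to C$ and $\gamma\colon A_{N+1}\to C$ agreeing within $\delta/2$ on $G$; then invoke the second hypothesis on $A_{N+1}$, $C$ and $\gamma$ to pull $C$ back into the tower, getting $p'>N+1$ and $\eta\colon C\to A_{p'}$ with $\eta\circ\gamma$ within $\delta/2$ of $\varphi_{N+1,p'}$ on $\varphi_{p,N+1}(G)$; then $\mu'=\eta\circ\beta$ works, since morphisms are isometric. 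Now fix $A\in\Obj_{\mathcal K}$, a finite $F\subset A$, $\varepsilon>0$ and $\mathcal K$-admissible embeddings $\alpha_1,\alpha_2\colon A\to M$. Since $A$ is an object of $\mathcal K$, $\mathcal K$-admissibility lets us approximate $\alpha_2$ and $\alpha_1$ on $F$ by $\varphi_{i_0,\infty}\circ g$ and $\varphi_{j_*,\infty}\circ h$ for suitable morphisms $g\in\Mor_{\mathcal K}(A,A_{i_0})$, $h\in\Mor_{\mathcal K}(A,A_{j_*})$; amalgamating $g$ and $h$ over $A$ and absorbing as in the move produces $j_0>j_*$ and $\theta_0\in\Mor_{\mathcal K}(A_{i_0},A_{j_0})$ with $\varphi_{j_0,\infty}\circ\theta_0\circ g$ close to $\alpha_1$ on $F$. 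This is the seed. Applying the move alternately thereafter, one constructs strictly increasing indices $i_0<i_1<\cdots$ and $j_0<j_1<\cdots$ together with morphisms $\theta_k\in\Mor_{\mathcal K}(A_{i_k},A_{j_k})$ and $\sigma_k\in\Mor_{\mathcal K}(A_{j_k},A_{i_{k+1}})$ satisfying
\[
\sigma_k\circ\theta_k\approx\varphi_{i_k,i_{k+1}}\ \text{on}\ G_k\subset A_{i_k},\qquad \theta_{k+1}\circ\sigma_k\approx\varphi_{j_k,j_{k+1}}\ \text{on}\ H_k\subset A_{j_k},
\]
the finite sets $G_k,H_k$ chosen large enough (to exhaust dense subsets and to contain forward images of earlier test sets) and the errors summable to less than $\varepsilon$. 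These relations force the $\theta_k$'s, and separately the $\sigma_k$'s, to be approximately compatible with the connecting maps, so they induce in the limit $\mathcal L$-embeddings $\Phi,\Psi\colon M\to M$, which are $\mathcal K$-admissible by the fourth item of Remark~\ref{remark:admissible}; the same relations yield $\Psi\circ\Phi=\Phi\circ\Psi=\id_M$, so $\Phi$ is a $\mathcal K$-admissible automorphism, and carrying the seed estimate through the construction gives $d(\Phi\circ\alpha_2(f),\alpha_1(f))<\varepsilon$ for $f\in F$. By Definition~\ref{D.Fraisse}, $M$ is then the \Fraisse limit.

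The only genuinely new input is the move, which is exactly where the NAP and the second hypothesis are used; everything downstream is the classical \Fraisse zigzag, so the main obstacle is bookkeeping rather than ideas. Concretely, one must fix in advance a dense sequence in each $A_i$ and, at every stage, demand the approximate commutation on a finite set containing both the next term of the relevant dense sequence and the images under all maps built so far of every finite set used earlier, while keeping the running error below $\varepsilon$; only with this care do the piecewise assignments glue to isometries of $M$, and only then does the alternation of ``forth'' and ``back'' guarantee surjectivity of $\Phi$ (equivalently, that the $\sigma_k$'s assemble to a genuine two-sided inverse). One small point to check against the definition of $\mathcal K$-admissibility is the approximation of $\alpha_1,\alpha_2$ used for the seed; this is legitimate because $A$ is itself an object of $\mathcal K$ mapped $\mathcal K$-admissibly into the $\mathcal K$-structure $M$ (cf.\ the third item of Remark~\ref{remark:admissible}). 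Alternatively one could deduce the statement from Theorem~\ref{thm:Fraisse} by building a $\mathcal K$-admissible isomorphism of $M$ onto the \Fraisse limit of $\mathcal K$, but that isomorphism requires the same zigzag, so I would present the direct argument.
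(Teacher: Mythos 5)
The paper does not give its own proof of this statement: it is stated as a simplification of Masumoto's Proposition~3.19, with the proof delegated to that reference. So there is no in-paper argument to compare against, and you have supplied the missing proof.

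Your argument is correct in outline and is essentially the argument underlying Masumoto's result. The $\mathcal K$-universality step is immediate from the first hypothesis and the third preservation property of Remark~\ref{remark:admissible}. Your ``move'' lemma is the right abstraction: amalgamate $\mu\colon A_p\to B$ with $\varphi_{p,N+1}$ via NAP, then use the theorem's second hypothesis to reinsert the amalgam into the tower at some $A_{p'}$ with $p'\ge N+1$; the $\delta/2+\delta/2$ estimate, using that morphisms are isometric, is exactly right. Fed into the standard Elliott-style approximate intertwining, this produces a $\mathcal K$-admissible automorphism $\Phi$ with the required estimate on $F$, and the fourth preservation property of Remark~\ref{remark:admissible} (applied along the cofinal subsequence $\{i_k\}$, together with its mirror along $\{j_k\}$ for the inverse) certifies admissibility of $\Phi$ and $\Phi^{-1}$.

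One point deserves explicit flagging. In constructing the ``seed,'' you use that a $\mathcal K$-admissible embedding $\alpha\colon A\to M$ with $A\in\Obj_{\mathcal K}$ can be approximated on a finite set by maps of the form $\varphi_{n,\infty}\circ g$ with $g\in\Mor_{\mathcal K}(A,A_n)$. Remark~\ref{remark:admissible} in the paper only records \emph{sufficient} conditions for admissibility, so this converse is not something one can read off the paper's remark; it is, however, exactly Masumoto's Definition~3.1(5), as you note, so the appeal is legitimate once one consults the cited reference rather than the paper's summary. Apart from making that dependence visible, and spelling out the summability bookkeeping you gesture at (choice of exhausting finite sets $G_k, H_k$ and a geometrically decaying sequence of tolerances), the proof is complete and correct.
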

A sequence $(A_i,\varphi_i)$ witnessing Theorem~\ref{thm:generic} is called \emph{generic}.

\subsection{The building blocks, their traces, and their representations} \label{subsection:blocks}
We fix some notation. If $k>0$, we denote by $M_k$ the algebra of $k\times k$-valued complex matrices. Our norm will always denote the $2$-norm, which makes $M_k$ a \cstar-algebra. $0_k$ and $1_k$ denote the $0$ matrix and the identity in $M_k$, respectively. If $a\in M_k$ and $b\in M_{k'}$, we denote by $\diag(a,b)$ the matrix
$
\begin{pmatrix}a&0\\0&b\end{pmatrix} \in M_{k+k'}
$.
This definition extends inductively. We often shorten notation, and write $\diag(\underbrace{a}_n)$ for $\diag(\underbrace{a,\ldots,a}_n)$, the matrix which has $n$-copies of $a$ on the diagonal.

If $n,k\in\NN$, let
\begin{eqnarray*}
A_{n,k}=\{f\in C([0,1],M_{nk})\mid \exists a\in M_k(f(0)=&\diag(\underbrace{a}_\text{n}),\\ f(1)=&\diag(\underbrace{a}_\text{n-1},0_k))\}
\end{eqnarray*}
and
\begin{eqnarray*}
B_{n,k}=\{f\in C([0,1],M_{2nk})\mid \exists a,b\in M_k (f(0)=&\diag(\underbrace{a}_\text{n}, \underbrace{b}_\text{n}),\\ f(1)=&\diag(\underbrace{a}_\text{n-1},0_k, \underbrace{b}_\text{n-1},0_k))\}
\end{eqnarray*}
These algebras are known as \emph{Razak blocks} (the $A_{n,k}$'s), and \emph{generalised Razak blocks} (the $B_{n,k}$'s). 
If $f\in A_{n,k}$, we denote by $a_f$ the element of $M_k$ such that 
\[
f(0)=\diag(\underbrace{a_f}_n).
\]
 If $f\in B_{n,k}$, we denote by $a_f$ and $b_f$ the elements of $M_k$ such that 
 \[
 f(0)=\diag(\underbrace{a_f}_n, \underbrace{b_f}_n).
\]
 \begin{proposition}\label{prop:Ktheory}
Let $n,k\in\NN$. Then
\begin{enumerate}
\item\label{noprojc1} (generalised) Razak blocks are stably projectionless, but every proper quotient of them has a nonzero projection;
\item\label{noprojc2} nonzero $^*$-homomorphisms between (generalised) Razak blocks are injective;
\item\label{noprojc3} $K_*(A_{n,k})=0$, $K_0(B_{n,k})\cong\zet$ and $K_1(B_{n,k})=0$.
\end{enumerate}
\end{proposition}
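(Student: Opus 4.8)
The plan is to exploit the concrete description of (generalised) Razak blocks as subalgebras of $C([0,1],M_N)$ ($N=nk$ or $2nk$) cut out by boundary conditions, and to compute everything by hand using continuity and connectedness of $[0,1]$.

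For \eqref{noprojc1}, I would first show stable projectionlessness. Suppose $p\in M_m(A_{n,k})$ is a projection; then $t\mapsto p(t)$ is a norm-continuous path of projections in $M_m(M_{nk})$, so the rank $r(t)=\tr(p(t))$ (unnormalised) is a continuous integer-valued function on $[0,1]$, hence constant, say equal to $r$. The boundary conditions force $r(0)$ to be divisible (in the relevant sense) in a way incompatible with $r(1)$: writing $p(0)=\diag(\underbrace{a}_n,\dots)$ with $a\in M_m(M_k)$ a projection of some rank $s$, we get $r(0)=ns$ for $A_{n,k}$ and $r(1)=(n-1)s$, forcing $s=0$ and hence $p=0$; for $B_{n,k}$ the same argument with $r(0)=n(s+s')$ and $r(1)=(n-1)(s+s')$ gives $s=s'=0$. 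So the only projection is $0$, and the same works after tensoring with $M_m$, i.e. the algebras are stably projectionless. For the second half, a proper quotient of $A_{n,k}$ or $B_{n,k}$ is obtained by restricting to a closed proper subset $X\subsetneq[0,1]$ (with boundary conditions retained only at the endpoints that lie in $X$); if $X$ omits a neighbourhood of $1$, or more generally is such that one of the endpoints is isolated from the rest, then evaluation at an appropriate point of $X$ maps onto a full matrix algebra $M_j$ (for interior points) or onto a proper corner, and in all cases one produces a nonzero projection — the cleanest instance being that evaluation at any interior point $t\in(0,1)$ is already a surjection onto $M_N$, which has a nonzero projection, and any proper quotient factors through such an evaluation or through a boundary evaluation whose image again contains a nonzero projection. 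I would spell this out by identifying the primitive ideal space of $A_{n,k}$ (resp.\ $B_{n,k}$) with $[0,1]$ and noting that proper quotients correspond to proper closed subsets.

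For \eqref{noprojc2}: if $\varphi\colon A\to B$ is a nonzero $^*$-homomorphism between (generalised) Razak blocks, then $\ker\varphi$ is a proper ideal of the simple-ish... no — $A$ is not simple, so instead I argue that $\ker\varphi$ is an ideal whose quotient $A/\ker\varphi$ embeds in $B$; but $B$ is stably projectionless by \eqref{noprojc1}, whereas every proper quotient of $A$ contains a nonzero projection, again by \eqref{noprojc1}. Hence $\ker\varphi$ cannot be proper and nonzero; since $\varphi\ne 0$, $\ker\varphi=0$, so $\varphi$ is injective. This is the quickest route and the reason \eqref{noprojc1} is stated first.

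For \eqref{noprojc3}: I would use the defining extension. For $A_{n,k}$, there is a short exact sequence $0\to S M_{nk}\to A_{n,k}\xrightarrow{(\ev_0,\ev_1)} (\text{boundary algebra})\to 0$ where $SM_{nk}=C_0((0,1),M_{nk})$ is the suspension (kernel of the two endpoint evaluations) and the boundary algebra is the pullback of the two boundary conditions; more precisely $\ev_0$ lands in $\diag(M_k)\otimes 1_n\cong M_k$ and $\ev_1$ in $M_k$ as well, and the boundary algebra is $\{(a,a'):\text{compatible}\}$, which for $A_{n,k}$ (since both boundary values are governed by the \emph{same} $a\in M_k$) is just $M_k$. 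So the sequence is $0\to SM_{nk}\to A_{n,k}\to M_k\to 0$. Since $K_*(SM_{nk})=(K_1(M_{nk}),K_0(M_{nk}))=(0,\zet)$ shifted, i.e. $K_0(SM_{nk})=0$, $K_1(SM_{nk})=\zet$, and $K_0(M_k)=\zet$, $K_1(M_k)=0$, the six-term sequence reads $\zet\xrightarrow{\partial}0\to K_0(A_{n,k})\to \zet\xrightarrow{\text{ind}} 0$ wait — I need to be careful about which boundary map is which; the point is that the index/exponential map $K_0(M_k)\to K_1(SM_{nk})=\zet$ is, by the standard identification, multiplication by the difference of the two boundary multiplicities, namely $n-(n-1)=1$, hence an isomorphism, which kills both $K_0(A_{n,k})$ and $K_1(A_{n,k})$. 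For $B_{n,k}$ the boundary algebra is $M_k\oplus M_k$ (governed by the independent pair $(a,b)$), giving $0\to SM_{2nk}\to B_{n,k}\to M_k\oplus M_k\to 0$, and the relevant boundary map $\zet^2\to\zet$ is $(x,y)\mapsto (n-(n-1))x+(n-(n-1))y=x+y$, which is surjective with kernel $\cong\zet$; chasing the six-term sequence then gives $K_0(B_{n,k})\cong\zet$ and $K_1(B_{n,k})=0$. The arithmetic of the boundary maps — identifying the index map with multiplication by the multiplicity defect $n-(n-1)=1$ — is the one genuinely delicate point, and is exactly the Razak-block computation underlying the classification; I would either cite Razak \cite{Razak} / Jacelon \cite{Jacelon:W} for this or include the short Bott-periodicity bookkeeping. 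Everything else is routine.

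The main obstacle I anticipate is not conceptual but bookkeeping: getting the boundary maps in the six-term sequences correct (orientation, and the claim that they are multiplication by $\pm 1$ for $A_{n,k}$ and by $(x,y)\mapsto x+y$ for $B_{n,k}$), and, for \eqref{noprojc1}, correctly describing which closed subsets of $[0,1]$ arise as primitive ideal spaces of proper quotients so that the "nonzero projection in every proper quotient" claim is watertight. Both are standard for people who have worked with these blocks, so I would keep the exposition brief and lean on \cite{Razak, Jacelon:W}.
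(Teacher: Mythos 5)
Your argument for stable projectionlessness in \eqref{noprojc1} matches the paper's "counting multiplicity" argument exactly, and your \eqref{noprojc2} (the image is a quotient of $A$ sitting inside the stably projectionless $B$, so by \eqref{noprojc1} the kernel must be trivial) is precisely the intended deduction. For \eqref{noprojc3}, where the paper cites the one-dimensional NCCW formula $K_0(A)\cong\ker(K_0(\alpha_0)-K_0(\alpha_1))$, $K_1(A)\cong\coker(K_0(\alpha_0)-K_0(\alpha_1))$ from Gong--Lin--Niu and then reads off the matrices of the boundary maps, you run the six-term sequence of $0\to SM_N\to A\to E\to 0$ directly. These are the same computation (the Gong--Lin--Niu formula is exactly the six-term sequence for this extension, with the exponential map equal to $K_0(\alpha_0)-K_0(\alpha_1)$), so your route buys a slightly more self-contained derivation at the cost of the orientation bookkeeping you flagged; it is correct.

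There is a genuine gap in your argument that every proper quotient has a nonzero projection. You identify proper quotients with restrictions $A|_X$ to proper closed $X\subsetneq[0,1]$, which is the right framework, but then argue that "any proper quotient factors through such an evaluation... whose image contains a nonzero projection." This is backwards: exhibiting a surjection $A|_X\twoheadrightarrow M_N$ produces a projection in $M_N$, not in $A|_X$ (compare $C_0((0,1),M_N)\twoheadrightarrow M_N$). Moreover, your case split ("$X$ omits a neighbourhood of $1$" or "an endpoint is isolated") misses ordinary configurations such as $X=[0,1/2]\cup[3/4,1]$ or a Cantor set in $(0,1)$. What you actually need, and what the paper does, is to \emph{construct} an element of $A_{n,k}$ whose restriction to $X$ is a projection: since $X$ is a proper closed set, $[0,1]\setminus X$ contains an interval $(t-\varepsilon,t+\varepsilon)\subset(0,1)$; take $f$ equal to $1_{nk}$ on $[0,t-\varepsilon]$, equal to the rank-$(n-1)k$ projection $\diag(1_k,\dots,1_k,0_k)$ on $[t+\varepsilon,1]$, and interpolating in between. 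Then $f$ satisfies the boundary conditions of $A_{n,k}$, and $f|_X$ is a nonzero projection because $X$ avoids the interpolation region. The omitted interval is what decouples the rank constraint at $0$ from that at $1$ — exactly the obstruction that killed projections in the full algebra. You should replace the "factors through evaluation" reasoning with this explicit construction (and its obvious analogue for $B_{n,k}$).
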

\begin{proof}
\eqref{noprojc2} follows from \eqref{noprojc1}, which we prove for Razak blocks, leaving the generalised case as an (easy) exercise. (1) is truly a `counting multiplicity' argument: If $f$ is a projection in $A_{n,k}$, so is $a_f$. Since $[0,1]$ is connected, the rank of $f(0)$, which equals $n\cdot\rank(a_f)$, is equal to the rank of $f(1)$, which equals $(n-1)\cdot\rank(a_f)$. Hence $\rank(a_f)=0$, and so $\rank(f(t))=0$ for all $t$, which implies that $f=0$.

If $\mathcal I$ is an ideal in $A_{n,k}$, then there is a closed $C\subseteq[0,1]$ such that $\mathcal I=\{f\mid f\rs C=0\}$. If $\mathcal I$ is nontrivial, $[0,1]\setminus C$ is nonempty. Hence there is $t\in (0,1)$ and $\varepsilon>0$ such that $(t-\varepsilon,t+\varepsilon)\cap C=\emptyset$. Any function in $A_{n,k}$ which is the identity on $[0,t-\varepsilon]$ and is a projection of rank $(n-1)k$ on $[t+\varepsilon,1]$ gives a projection in the quotient.

\eqref{noprojc3}: The $K$-theory can be computed by realising $A_{n,k}$ and $B_{n,k}$ as one-dimensional NCCW complexes (also called \emph{point-line} or \emph{Elliott-Thomsen} algebras). That is, they are pullbacks of the form
\[
A(E,F,\alpha_0,\alpha_1) = \{(f,g) \in C([0,1],F)\oplus E \mid f(0) = \alpha_0(g), f(1) = \alpha_1(g)\}
\]
for finite-dimensional algebras $E$ and $F$ and $^*$-homomorphisms $\alpha_0,\alpha_1:E\to F$. For such an algebra $A$, if $K_0(\alpha_i)$ denote the induced homomorphisms $K_0(E)\to K_0(F)$, one has $K_1(A) \cong \coker(K_0(\alpha_0)-K_0(\alpha_1))$ and $K_0(A) \cong \ker(K_0(\alpha_0)-K_0(\alpha_1))$ (with the ordering inherited from $K_0(E)$). (See \cite[Proposition 35]{GongLinNiu.Class1}.) For example, \[B_{n,k} \cong A(M_k\oplus M_k, M_{2nk},\id\otimes1_n\oplus\id\otimes1_n, \id\otimes1_{n-1}\oplus\id\otimes1_{n-1}).\] The map $K_0(\alpha_0)-K_0(\alpha_1)\colon\zet^2\to\zet$ is represented by the matrix $(1\:1)$. It follows that $K_1(B_{n,k})=0$ and $K_0(B_{n,k}) \cong \{(l,-l) \mid l\in\zet\} \cong \zet$ (with trivial positive cone, whence $B_{n,k}$ is stably projectionless). A similar calculation yields the $K$-theory of $A_{n,k}$.
\end{proof}

\begin{remark} \label{remark:generator}
While \cite[Proposition 3.5]{GongLinNiu.Class1} is stated for unital point-line algebras, it also holds in the nonunital case. This can be seen by unitising, which also helps to identify generators of $K_0$. For example,
\[
\tilde B_{n,k} \cong A(M_k\oplus M_k \oplus \ce, M_{2nk}, \id\otimes1_n\oplus\id\otimes1_n, \id\otimes1_{n-1}\oplus\id\otimes1_{n-1}\oplus\id\otimes1_{2k}).
\]
Then
\[
K_0(\tilde B_{n,k}) \cong \ker(\begin{pmatrix}1&1&-2k\end{pmatrix}\colon\zet^3\to\zet) =\Span_{\zet}\{(1,-1,0),(2k,0,1)\}.
\]
By definition, $K_0(B_{n,k})$ is the kernel of the map $K_0(\tilde B_{n,k}) \to \zet$ induced by the quotient map $\tilde B_{n,k} \to \ce$. So $K_0(B_{n,k})$ is generated by
\[
(1,-1,0) = (k+1,k-1,1) - (k,k,1) = [p_{n,k}]-[1_{\tilde B_{n,k}}],
\]
where $p_{n,k}\in M_2(\tilde B_{n,k})$ is a projection with
\[
p_{n,k}(1)=\diag(\underbrace{\diag (1_{k+1},0_{k-1})}_{n-1}, \diag(1_k,0_k), \underbrace{\diag(1_{k-1},0_{k+1})}_{n-1},\diag(1_k,0_k)).
\]
In the sequel, we will identify $K_0(B_{n,k})$ with $\mathbb{Z}$ via the generator $[p_{n,k}]-[1_{\tilde B_{n,k}}]$. Let $\varphi\colon B_{n,k}\to B_{n',k'}$ be a $^*$-homomorphism. By abuse of notation, we extend $\varphi$ to a unital map $M_2(\tilde B_{n,k})\to M_2(\tilde B_{n',k'})$ in a natural way, and we say that $\varphi$ has $K$-theory equal to $\ell\in\zet$, and write $K_0(\varphi)=\ell$, if 
\[
[\varphi(p_{n,k})]-[1_{\tilde B_{n',k'}}]=\ell([p_{n',k'}]-[1_{\tilde B_{n',k'}}]).
\]
\end{remark}

\subsubsection*{Representations}
If $\pi$ and $\rho$ are representations of the same \cstar-algebra, we write 
\[
\pi\sim_u\rho
\]
 if they are unitarily equivalent.
The space of unitary equivalence classes of nonzero irreducible representations of a \cstar-algebra $A$ is called the \emph{spectrum} $\hat A$ of $A$. Equipped with the `hull-kernel' topology, $\hat A$ is always locally compact (see \cite[\S3.3]{Dixmier:1964rt}) but often not Hausdorff. (Generalised) Razak blocks are sub-homogeneous, that is, all elements $[\pi]\in\hat A$ are finite dimensional. In fact, every such $\pi$ is equivalent to a point representation, either at an interior point $t\in(0,1)$ or at one of the `points at infinity' $\{\infty_i\}$. 

Specifically, if $\pi$ is a nonzero irreducible representation of $A_{n,k}$ then either
\begin{itemize}
\item $\dim\pi=nk$ and $\pi$ is unitarily equivalent to $f\mapsto f(t)$ for some $t\in (0,1)$, or
\item $\dim\pi=k$ and $\pi$ is unitarily equivalent to $f\mapsto a_f$.
\end{itemize}
Similarly, if $\pi$ is a nonzero irreducible representation of $B_{n,k}$ then either
\begin{itemize}
\item $\dim\pi=2nk$ and $\pi$ is unitarily equivalent to $f\mapsto f(t)$ for some $t\in (0,1)$,
\item $\dim\pi=k$ and $\pi$ is unitarily equivalent to $f\mapsto a_f$, or
\item $\dim\pi=k$ and $\pi$ is unitarily equivalent to $f\mapsto b_f$.
\end{itemize}
The above statements remain true after matrix amplification. The effect of adding a unit to a (generalised) Razak block $A$ is to add an extra point at infinity; this corresponds to the irreducible representation $\tilde{A}\to\ce$ that annihilates $A$.  

We write $\pi_t$ for the point representations $f\mapsto f(t)$. If $A$ is a Razak block, we write $\pi_\infty$ for the representation $f\mapsto a_f$. If $A$ is a generalised Razak block, we write $\pi_{\infty_1}$ and $\pi_{\infty_2}$ for the representations $f\mapsto a_f$ and $f\mapsto b_f$.
As finite-dimensional representations are unitarily equivalent to sums of irreducible ones, we have the following:
\sloppy
\begin{proposition}\label{prop:repr}
Let $n,k,m\in \NN$. Then
\begin{itemize}
\item If $\pi$ is an $m$-dimensional representation of $A_{n,k}$ then there are uniquely determined $s_{1},\ldots,s_{j}\in [0,1)$, and $r_{0},r_1\in\NN$, with $r_{1}<n$, such that
\[
\pi\sim_u \diag(\pi_{s_1},\ldots,\pi_{s_j},\underbrace{\pi_\infty}_{r_{1}},0_{r_{0}}).
\]
\item If $\pi$ is an $m$-dimensional representation of $B_{n,k}$ then there are uniquely determined $s_{1},\ldots,s_{j}\in [0,1)$, and $r_{0},r_{1},r_{2}\in\NN$, with $\min\{r_{1},r_{2}\}<n$, such that
\[
\pi\sim_u \diag(\pi_{s_1},\ldots,\pi_{s_j},\underbrace{\pi_{\infty_1}}_{r_{1}},\underbrace{\pi_{\infty_2}}_{r_{2}},0_{r_{0}}).
\]
\end{itemize}
The same descriptions hold after matrix amplification. The unique unital extension of $\pi$ to the unitisation is described by replacing $0$ with the unital representation onto $\ce$. \qed 
\end{proposition}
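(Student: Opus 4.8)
The plan is to reduce everything to two facts already in hand: (i) the list of irreducible finite-dimensional representations of $A_{n,k}$ and $B_{n,k}$ recalled just above, and (ii) the standard structure theory of finite-dimensional representations of a \cstar-algebra. For (ii): any $\pi\colon A\to M_m$ splits as $\pi\sim_u\rho\oplus 0_{r_0}$ with $\rho$ nondegenerate, $r_0=m-\dim\overline{\pi(A)\ce^m}$ being intrinsic to $\pi$; moreover $\rho(A)$ is a finite-dimensional \cstar-algebra acting nondegenerately, so by the Wedderburn decomposition $\rho$ is a direct sum of nonzero irreducibles, uniquely up to permutation and unitary equivalence. The only \cstar-specific input is~(i).

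\emph{Existence.} For $A_{n,k}$, decompose $\pi\sim_u\rho\oplus 0_{r_0}$ and then $\rho$ into irreducibles; by (i) each summand is $\pi_t$ for some $t\in(0,1)$ or a copy of $\pi_\infty$, so $\rho\sim_u\diag(\pi_{t_1},\dots,\pi_{t_p},\underbrace{\pi_\infty}_q)$. Since $\pi_0(f)=f(0)=\diag(\underbrace{a_f}_n)$, i.e.\ $\pi_0\sim_u\diag(\underbrace{\pi_\infty}_n)$, regroup the $q$ copies of $\pi_\infty$ as $\lfloor q/n\rfloor$ copies of $\pi_0$ together with $r_1:=q-n\lfloor q/n\rfloor<n$ leftovers; this gives the asserted normal form, with $\{s_i\}$ the multiset $\{t_1,\dots,t_p\}$ augmented by $\lfloor q/n\rfloor$ copies of $0$. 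The case of $B_{n,k}$ is identical using $\pi_0\sim_u\diag(\underbrace{\pi_{\infty_1}}_n,\underbrace{\pi_{\infty_2}}_n)$: if $Q_1,Q_2$ denote the multiplicities of $\pi_{\infty_1},\pi_{\infty_2}$ in $\rho$, peel off $N_0:=\min\{\lfloor Q_1/n\rfloor,\lfloor Q_2/n\rfloor\}$ copies of $\pi_0$, leaving $r_1:=Q_1-nN_0$ and $r_2:=Q_2-nN_0$, both $\ge 0$ and with $\min\{r_1,r_2\}<n$.

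\emph{Uniqueness.} From a putative normal form one recovers its multiset of irreducibles: the $\pi_{s_i}$ with $s_i\neq0$ contribute themselves, while the $s_i$ equal to $0$ together with the $r_1$ trailing copies of $\pi_\infty$ contribute altogether $Q:=n\,\#\{i:s_i=0\}+r_1$ copies of $\pi_\infty$ (and, in the generalised case, $Q_1,Q_2$ copies of $\pi_{\infty_1},\pi_{\infty_2}$ with $Q_\ell=n\,\#\{i:s_i=0\}+r_\ell$). By the uniqueness clause of (ii), the numbers $r_0$, $Q$ (resp.\ $Q_1,Q_2$) and the multiset $\{s_i:s_i\neq0\}$ are invariants of $\pi$. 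Finally the constraint $0\le r_1<n$ (resp.\ $r_1,r_2\ge0$ together with $\min\{r_1,r_2\}<n$) forces $\#\{i:s_i=0\}=\lfloor Q/n\rfloor$ (resp.\ $=\min\{\lfloor Q_1/n\rfloor,\lfloor Q_2/n\rfloor\}$), which then determines $r_1$ (resp.\ $r_1,r_2$) and hence the entire multiset $\{s_1,\dots,s_j\}$.

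The amplified statements follow verbatim using the amplified form of (i) recorded in the text, amplification only rescaling the dimensions of the irreducibles. For the unitisation, a unital extension $\tilde\pi$ of $\pi$ to $\tilde A$ must agree with $\pi$ on the essential subspace and, on the $r_0$-dimensional degenerate subspace (where $A$ acts as $0$ but $1_{\tilde A}$ must act as the identity), must consist of $r_0$ copies of the representation $\tilde A\to\ce$ annihilating $A$ — exactly ``replacing $0$ by the unital representation onto $\ce$''. I do not anticipate a serious obstacle; the only point requiring care is the bookkeeping in the uniqueness argument, namely verifying that the normalisations $r_1<n$ and $\min\{r_1,r_2\}<n$ render the decomposition canonical.
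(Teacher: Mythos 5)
Your proof is correct and follows exactly the route the paper implicitly takes (the paper states the result with an immediate \qed, citing only the preceding sentence that finite-dimensional representations are sums of irreducibles). You simply make explicit the regrouping of copies of $\pi_\infty$ (resp.\ $\pi_{\infty_1},\pi_{\infty_2}$) into $\pi_0$'s via the Euclidean normalisation $r_1<n$ (resp.\ $\min\{r_1,r_2\}<n$), and correctly verify that this normalisation makes the form canonical.
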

\fussy

\sloppy
As in the case of maps between generalised Razak blocks, if $\pi\colon B_{n,k}\to M_m$ is a representation, this induces a group homomorphism \[\mathbb Z\cong K_0(B_{n,k})\to K_0(M_m)\cong\mathbb Z.\] As before (see e.g., Remark~\ref{remark:generator}) we write $K_0(\pi)=\ell$, and say that the $K$-theory of $\pi$ is $\ell$, if  the generator of $K_0(B_{n,k})$ gets sent to $\ell$ many times the canonical generator of $K_0(M_m)$.
\fussy 
\begin{lemma}\label{lemma:KtheoryGen}
Let $A$ be a generalised Razak block, and $\pi\colon A\to M_m$ be a representation, and suppose that $r_{1},r_{2}$ are the numbers given by Proposition~\ref{prop:repr}. Then $K_0(\pi)=r_{1}-r_{2}$.
\end{lemma}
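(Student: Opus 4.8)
The plan is to reduce the computation of $K_0(\pi)$ to the case of the point representations $\pi_{\infty_1}$, $\pi_{\infty_2}$, $\pi_t$ ($t\in(0,1)$) and the zero representation, and then invoke additivity of $K_0$ under direct sums. Indeed, by Proposition~\ref{prop:repr} we have $\pi\sim_u\diag(\pi_{s_1},\ldots,\pi_{s_j},\pi_{\infty_1}^{(r_1)},\pi_{\infty_2}^{(r_2)},0_{r_0})$, and unitarily equivalent representations induce the same map on $K_0$; moreover $K_0$ of a finite direct sum of representations is the sum of the $K_0$'s. So $K_0(\pi)=\sum_{i=1}^j K_0(\pi_{s_i})+r_1\,K_0(\pi_{\infty_1})+r_2\,K_0(\pi_{\infty_2})+r_0\,K_0(0)$. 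Since $K_0(0)=0$, it suffices to show that $K_0(\pi_{\infty_1})=1$, $K_0(\pi_{\infty_2})=-1$, and $K_0(\pi_t)=0$ for every $t\in(0,1)$.

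These three facts I would read off directly from the generator $[p_{n,k}]-[1_{\tilde B_{n,k}}]$ fixed in Remark~\ref{remark:generator}, using the explicit formula for $p_{n,k}(1)$. For $\pi_t$ with $t\in(0,1)$: the evaluation map $B_{n,k}\to M_{2nk}$, $f\mapsto f(t)$, is homotopic (through the evaluations $f\mapsto f(s)$, $s\in(0,1)$, which may be pushed to the endpoints in the unitisation) to a representation factoring through a finite-dimensional quotient in which the class of $p_{n,k}$ and of the unit have the same rank; more concretely, since $[0,1]$ is contractible, $[\pi_t(p_{n,k})]$ and $[\pi_t(1_{\tilde B_{n,k}})]$ have the same $K_0$-class in $M_{2nk}$ (both have rank equal to $\tfrac12\cdot 2nk$ once one computes $\rank p_{n,k}(t)$ via the boundary conditions), so $K_0(\pi_t)=0$. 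For $\pi_{\infty_1}\colon f\mapsto a_f$, unitised as the map sending the extra scalar coordinate to $1$, one computes $\pi_{\infty_1}(p_{n,k})$ and $\pi_{\infty_1}(1_{\tilde B_{n,k}})$ from the description of $p_{n,k}$ at the point $\infty_1$ — using that the first block of $f(0)=\diag(a_f^{(n)},b_f^{(n)})$ carries $a_f$ — and finds the difference of ranks equals $+1$ times the generator; symmetrically $K_0(\pi_{\infty_2})=-1$ because the $\infty_2$-block contributes with the opposite sign, exactly as $(1\ 1)$ versus the signs appearing in the identification $K_0(B_{n,k})\cong\{(l,-l)\mid l\in\zet\}$ from the proof of Proposition~\ref{prop:Ktheory}. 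Substituting, $K_0(\pi)=r_1-r_2$.

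The main obstacle is bookkeeping the unitisation carefully: $K_0(\pi)$ is defined via the \emph{unital} extension $\tilde\pi\colon M_2(\tilde B_{n,k})\to M_2(\tilde M_m)$ and the generator $[p_{n,k}]-[1_{\tilde B_{n,k}}]$, so one must track how $\tilde\pi$ acts on the scalar coordinate of $\tilde B_{n,k}$ (for $\pi_t$ it is sent into $\ce\subseteq M_m$ by Proposition~\ref{prop:repr}, for $\pi_{\infty_i}$ likewise) and confirm that $[\tilde\pi(1_{\tilde B_{n,k}})]$ is the canonical generator of $K_0(M_m)$ as required for the identification $K_0(M_m)\cong\zet$. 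Once the ranks of $\pi_{\infty_i}(p_{n,k})$ are pinned down from the formula for $p_{n,k}(1)$ — where the $\diag(1_{k+1},0_{k-1})$ blocks sit over $a$ and the $\diag(1_{k-1},0_{k+1})$ blocks over $b$ — the signs fall out and the computation is immediate. An alternative, cleaner route avoiding the homotopy argument for $\pi_t$ is to note that every $\pi_t$ factors through the surjection $B_{n,k}\to M_{2nk}$ whose induced map on $K_0$ is zero (its domain's $K_0$ is generated by a formal difference of equal-rank projections in the target), which can be seen from the pullback picture in the proof of Proposition~\ref{prop:Ktheory}; I would likely present this factorization argument rather than the homotopy, as it is shorter and uses only what is already in the excerpt.
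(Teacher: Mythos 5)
Your proposal takes essentially the same route as the paper: decompose $\pi$ into irreducible summands via Proposition~\ref{prop:repr}, then read off the rank of the generator $[p_{n,k}]-[1_{\tilde B_{n,k}}]$ at each; the paper's one-line proof (using that the identification $K_0(M_m)\cong\zet$ is by rank) carries exactly this content, and your explicit values $K_0(\pi_{\infty_1})=1$, $K_0(\pi_{\infty_2})=-1$, $K_0(\pi_t)=0$ are the intended computation. One small slip worth flagging: $\rank p_{n,k}(t)=2nk$, not $\tfrac12\cdot 2nk=nk$ (the contractibility/homotopy remark is also unnecessary --- the rank of a continuous projection-valued function on $[0,1]$ is just constant); since $\rank\pi_t(1_{\tilde B_{n,k}})=2nk$ as well, the conclusion $K_0(\pi_t)=0$ is unaffected.
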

\begin{proof}
This follows from the definition of the generator $[p_{n,k}]-[1_{\tilde B_{n,k}}]$ of $K_0(B_{n,k})$ (see Remark~\ref{remark:generator}), and the fact that in the identification of $K_0(M_m)$ with $\mathbb{Z}$, a difference of projection classes $[q_1]-[q_2]$ corresponds to $\rank(q_1)-\rank(q_2)$.
\end{proof}

The following stable uniqueness lemma will be used in the proof of Theorem~\ref{T.gensmalldistance}.

\begin{lemma}\label{lemma:KtheoryGen2}
Let $A$ be a generalised Razak block and $\rho_1,\rho_2\colon A\to M_q$ be two representations with $K_0(\rho_1)=K_0(\rho_2)=\ell$.
Then there exists $j\in\NN$ and points $x_1,\ldots,x_j, y_1,\ldots,y_j$ in $[0,1]$ such that 
 \[
 \diag(\rho_1,\pi_{x_1},\ldots,\pi_{x_j})\sim_u \diag(\rho_2,\pi_{y_1},\ldots,\pi_{y_j}).
 \] 
 \end{lemma}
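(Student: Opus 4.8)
The plan is to reduce everything to a combinatorial statement about the ``spectral data'' of representations as described in Proposition~\ref{prop:repr}. By that proposition, each $\rho_i$ decomposes (up to $\sim_u$) as $\diag(\pi_{s^i_1},\ldots,\pi_{s^i_{j_i}},\pi_{\infty_1}^{\oplus r^i_1},\pi_{\infty_2}^{\oplus r^i_2},0_{r^i_0})$ with $\min\{r^i_1,r^i_2\}<n$. By Lemma~\ref{lemma:KtheoryGen}, $K_0(\rho_i)=r^i_1-r^i_2$, so the hypothesis $K_0(\rho_1)=K_0(\rho_2)=\ell$ gives $r^1_1-r^1_2=r^2_1-r^2_2=\ell$.

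The first step is to remove the zero summands: after adding enough copies of $\pi_{t}$ for some interior point $t$ (these are genuine $2nk$-dimensional point representations, so they change dimensions in bulk) to both sides, I may assume $r^1_0=r^2_0=0$; more precisely I pad each $\rho_i$ so the two padded representations have the same dimension $q'$, which forces the total count of interior points (weighted by $2nk$) plus the $\pi_{\infty_1},\pi_{\infty_2}$ contributions to match. The second step handles the points at infinity. Since $r^1_1-r^1_2=r^2_1-r^2_2$, WLOG $r^1_1\le r^2_1$ and correspondingly $r^1_2\le r^2_2$ with $r^2_1-r^1_1=r^2_2-r^1_2=:s\ge 0$. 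Now the key observation is that $\diag(\pi_{\infty_1},\pi_{\infty_2})$ is unitarily equivalent, \emph{after adding some interior point representations}, to a direct sum of interior point representations: indeed $\pi_{\infty_1}$ is $f\mapsto a_f$ and $\pi_{\infty_2}$ is $f\mapsto b_f$, and $\diag(\underbrace{\pi_{\infty_1}}_{n},\underbrace{\pi_{\infty_2}}_{n})$ is precisely $\pi_0$ (the evaluation at $0$), while $\pi_0$ and $\pi_1$ differ only by adding/deleting $0$-blocks — so $\diag(\pi_{\infty_1}^{\oplus n},\pi_{\infty_2}^{\oplus n})\oplus 0_{2k}\sim_u \diag(\pi_{\infty_1}^{\oplus(n-1)},\pi_{\infty_2}^{\oplus(n-1)})\oplus \pi_1$-type data, and iterating/homotoping one converts blocks of $\pi_{\infty_1}\oplus\pi_{\infty_2}$ into interior point representations. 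This lets me trade the surplus $s$ copies of $\pi_{\infty_1}\oplus\pi_{\infty_2}$ on the $\rho_2$ side for interior points, reducing to the case $r^1_1=r^2_1$ and $r^1_2=r^2_2$.

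At that stage both (padded) representations are $\diag(\text{interior points})\oplus(\text{same }\pi_{\infty_1},\pi_{\infty_2}\text{ blocks})$, and the residual claim is purely about finite multisets of points in $[0,1]$: two finite direct sums of point evaluations $\diag(\pi_{x_1},\ldots)$ and $\diag(\pi_{y_1},\ldots)$ of the same total dimension become unitarily equivalent after adjoining a common further list of point evaluations. This is immediate: take the common refinement/union — $\diag(\pi_{x_1},\ldots,\pi_{x_p})$ with the extra points $y_1,\ldots,y_m$ appended equals $\diag(\pi_{y_1},\ldots,\pi_{y_m})$ with $x_1,\ldots,x_p$ appended, as multisets of point representations, hence they are literally unitarily equivalent (a permutation matrix). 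Unwinding the paddings from steps one and two (all of which were additions of point representations $\pi_t$, $\pi_x$, or of the manufactured interior points) then yields $j\in\NN$ and points $x_1,\ldots,x_j,y_1,\ldots,y_j$ with $\diag(\rho_1,\pi_{x_1},\ldots,\pi_{x_j})\sim_u\diag(\rho_2,\pi_{y_1},\ldots,\pi_{y_j})$.

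The step I expect to be the main obstacle is the second one: carefully justifying that a block $\diag(\pi_{\infty_1}^{\oplus s},\pi_{\infty_2}^{\oplus s})$ can be absorbed into a sum of interior point representations after adding finitely many such representations. One must be slightly careful that $\pi_{\infty_1}$ and $\pi_{\infty_2}$ are not themselves evaluations at an interior point, so the absorption genuinely uses the boundary structure of the generalised Razak block (the relation $f(0)=\diag(a_f^{\oplus n},b_f^{\oplus n})$ and $f(1)=\diag(a_f^{\oplus(n-1)},0_k,b_f^{\oplus(n-1)},0_k)$), together with path-connectedness of $[0,1]$ to homotope evaluation points; writing this cleanly, rather than invoking it heuristically, is where the real work lies.
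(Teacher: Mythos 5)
Your overall plan (canonicalize via Proposition~\ref{prop:repr} and Lemma~\ref{lemma:KtheoryGen}, then trade summands so the two canonical forms match) is the right idea, and it is indeed what the paper does, but the first step as written contains a genuine gap, and the direction of your ``trade'' in the second step is backwards.

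Step 1 does not work as stated. You claim that by adding copies of $\pi_t$ for an interior point $t$ to both $\rho_1$ and $\rho_2$ you may assume $r^1_0=r^2_0=0$. But $\diag(\rho_i,\pi_t,\ldots,\pi_t)$ still contains the zero block $0_{r^i_0}$ unchanged: adding a faithful point representation $\pi_t$ ($t\in(0,1)$) appends a block disjoint from the $0$-block and can never absorb it. (Note also that $\rho_1$ and $\rho_2$ already have the same dimension $q$, so ``padding to equal dimension'' gives no new constraint.) The numbers $r^1_0$ and $r^2_0$ need not be zero, and they need not even be equal; what dimension counting gives you (and what the paper uses) is only that $r^1_0-r^2_0$ is divisible by $2k$. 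The mechanism that actually removes the surplus zeros is the identity $\diag(\pi_0,0_{2k})\sim_u\diag(\pi_{\infty_1},\pi_{\infty_2},\pi_1)$: one adds $\pi_0$ (a point representation at the \emph{endpoint} $0$, crucially not an interior point) to the side with more zeros, consuming $0_{2k}$ and producing a $\pi_{\infty_1}\oplus\pi_{\infty_2}$ pair together with a $\pi_1$ on the other side. This is precisely the paper's Case 2.

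Your step 2 then goes in the opposite direction --- you want to convert surplus $\pi_{\infty_1}\oplus\pi_{\infty_2}$ blocks into interior point representations --- but the only relations available go the other way (they introduce a zero block). Indeed the specific equivalence you quote,
\[
\diag(\pi_{\infty_1}^{\oplus n},\pi_{\infty_2}^{\oplus n})\oplus 0_{2k}\sim_u \diag(\pi_{\infty_1}^{\oplus(n-1)},\pi_{\infty_2}^{\oplus(n-1)})\oplus\pi_1,
\]
is dimensionally wrong for $n>2$ ($2nk+2k$ versus $4nk-2k$); the correct relation is $\diag(\pi_{\infty_1}^{\oplus n},\pi_{\infty_2}^{\oplus n},0_{2k})\sim_u\diag(\pi_{\infty_1},\pi_{\infty_2},\pi_1)$, equivalently $\pi_0\oplus 0_{2k}\sim_u\pi_{\infty_1}\oplus\pi_{\infty_2}\oplus\pi_1$. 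In the paper, once the zero counts are equalized, the equality $r^1_1=r^2_1$ falls out automatically from dimension counting modulo $2nk$ together with $\min\{r^i_1,r^i_2\}<n$; there is no need to absorb $\pi_\infty$ blocks into interior points, and doing so would reintroduce zero summands. To repair your proof, replace step 1 by the paper's argument: assume WLOG $r^1_0\geq r^2_0$, deduce $2k\mid(r^1_0-r^2_0)$, add $(r^1_0-r^2_0)/2k$ copies of $\pi_0$ to $\rho_1$ and the same number of $\pi_1$ to $\rho_2$, and use the relation above to equalize the zero blocks; then $r^1_1=r^2_1$ follows by counting, and the remaining point summands match up by a permutation as in your final paragraph.
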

\begin{proof}
\sloppy
Without loss of generality, we can assume $\ell\geq0$ (if not, replace $\infty_1$ with $\infty_2$ in the argument below). By Proposition~\ref{prop:repr} and Lemma~\ref{lemma:KtheoryGen}, we can find natural numbers $m,m', r_{1,1},r_{2,1},r_{1,0}$ and $r_{2,0}$, and points $x_1,\ldots,x_m$ and $y_1,\ldots,y_{m'}$ in $[0,1)$ such that for all $f\in A$ we have
\[
\rho_1\sim_u\diag(\underbrace{\pi_{\infty_1}}_\ell, \underbrace{\diag(\pi_{\infty_1},\pi_{\infty_2})}_{r_{1,1}},0_{r_{1,0}},\pi_{x_1},\dots,\pi_{x_m})
\]
and 
\[
\rho_2\sim_u\diag(\underbrace{\pi_{\infty_1}}_\ell, \underbrace{\diag(\pi_{\infty_1},\pi_{\infty_2})}_{r_{2,1}},0_{r_{2,0}},\pi_{y_1},\dots,\pi_{y_{m'}}),
\]
 where $r_{1,1},r_{2,1}<n$. Without loss of generality we can assume that $r_{1,0}\geq r_{2,0}$.
\fussy

\noindent \underline{\textbf{Case 1:}} $r_{1,0}= r_{2,0}$.
As the two representations have the same dimension, and since $\pi_{\infty_1}$ and $\pi_{\infty_2}$ have dimension $k$ and $\pi_t$ has dimension $2nk$ for all $t\in [0,1]$, then 
\[
(2r_{1,1}k+r_{1,0}+\ell k)=(2r_{2,1}k+r_{2,0}+\ell k)\mod 2nk,
\]
 hence $2r_{1,1}k=2r_{2,1}k\mod2nk$. As $r_{1,1},r_{2,1}<n$, then $r_{1,1}=r_{2,1}$, and therefore $m=m'$. Set $j=m$, and let $z_i=y_i$ and $w_i=x_i$ for all $i\leq m$. Then 
 \[
 \diag(\rho_1,\pi_{z_1},\ldots,\pi_{z_j})\sim_u \diag(\rho_2,\pi_{w_1},\ldots,\pi_{w_j}).
 \] This is the thesis.

\noindent \underline{\textbf{Case 2:}} $r_{1,0}> r_{2,0}$.
Counting the size of representations as above, we have that $2k$ divides $r_{1,0}-r_{2,0}$. Let $i=\frac{r_{1,0}-r_{2,0}}{2k}$. Let 
\[
\rho_1'=\diag(\rho_1,\underbrace{\pi_{0}}_i)
\]
and 
\[
\rho_2'=\diag(\rho_1,\underbrace{\pi_{1}}_i).
\]
Since $\diag(\pi_0,0_{2k})\sim_u\diag(\pi_{\infty_1},\pi_{\infty_2},\pi_1)$, then
\[
\rho_1'\sim_u
\diag(\underbrace{\pi_{\infty_1}}_\ell, \underbrace{\diag(\pi_{\infty_1},\pi_{\infty_2})}_{r_{1,1}+i},0_{r_{2,0}},\pi_{x_1},\dots,\pi_{x_m},\underbrace{\pi_1}_i).
\]
Hence, by Case 1, $\rho_1'$ and $\rho'_2$ can be made unitarily equivalent by adding point representations. Since $\rho_1'$ (resp, $\rho'_2$) is obtained from $\rho_1$ (resp, $\rho_2$) by adding point representations, the thesis follows.
\end{proof}
\begin{remark}\label{remark:points}
The choice of how many points are needed depends only of $A$, $q$, and $\ell$. Since the range of the possible $K$-theories of maps $A\to M_q$ depends only on $q$ (since the values $r_{i,j}$ are bounded by $q$), there is a function $f\colon\NN\to\NN$ such that if $A$ is a generalised Razak block and $\rho_1,\rho_2\colon A\to M_q$ are representations with the same $K$-theory then there exist $x_1,\ldots,x_{f(q)}, y_1,\ldots,y_{f(q)}\in[0,1]$ such that 
 \[
 \diag(\rho_1,\pi_{x_1},\ldots,\pi_{x_{f(q)}})\sim_u \diag(\rho_2,\pi_{y_1},\ldots,\pi_{y_{f(q)}}).
 \] 
 \end{remark}
\subsubsection*{Traces}
A state $\tau$ on a \cstar-algebra $A$ such that $\tau(ab)=\tau(ba)$ for all $a,b\in A$ is a \emph{trace}. We denote the trace space of $A$ by $T(A)$. If $n\in\NN$, $\tau_n$ is the unique trace on $M_n$. If $A$ and $B$ are \cstar-algebras, $\sigma\in T(A)$ and $\tau\in T(B)$, we say that a $^*$-homomorphism sends $\sigma$ to $\tau$, and write 
\[
\varphi\colon (A,\sigma)\to (B,\tau),
\]
if for all $a\in A$ we have $\sigma(a)=\tau(\varphi(a))$. 

The trace space of (generalised) Razak blocks is not compact. Indeed, the traces $f\mapsto\tau_N(f(t))$ (where $N$ is either $nk$ or $2nk$ as appropriate) converge as $t\to1$ to a linear functional of norm $\frac{n-1}{n}<1$. However, $T(A)$ is contained in the $w^*$-closed convex hull of the extremal traces $\partial_eT(A)$, and these are in bijective correspondence via the GNS construction with the spectrum $\hat A$ of $A$. In fact, the `hull-kernel' topology on the space of irreducible representations coincides with the quotient topology supplied by the GNS map and the $w^*$-topology on $\partial_eT(A)$; so the correspondence is a homeomorphism. Therefore, every trace on a (generalised) Razak block corresponds to a unique Borel probability measure on $(0,1)\cup\{\infty_i\}$.

To be precise, fix a (generalised) Razak block $A$ and $\tau\in T(A)$. Define a measure $\mu_\tau$ by
\[
\mu_\tau(U)=\sup\{ \tau(f)\mid f\in (A)_+, \norm{f}\leq 1, \supp(f)\subseteq U\}
\]
for open sets $U \subseteq (0,1)\cup\{\infty_i\}$. Here, by $\supp(f)\subseteq U$ we mean that $\pi(f)=0$ for every $\pi\in \hat A \setminus U$, or, in other words, that $f(t)=0$ for $t\notin U$. In the case of $A_{n,k}$, this is the same as a Borel probability measure on $[0,1)$, or equally, a Borel probability measure $\mu$ on $[0,1]$ with $\mu(\{1\})=0$. In the case of $B_{n,k}$, $\mu_\tau$ is uniquely of the form
\[
\mu_\tau = \lambda_1\delta_{\infty_1} + \lambda_2\delta_{\infty_2} + \lambda_3\mu,
\]
where $\delta_t$ is the point mass at $t$, $\mu$ is a measure on $(0,1)$ and $\lambda_1+\lambda_2+\lambda_3=1$.

Conversely, if $A=A_{n,k}$ is a Razak block, to a Borel probability measure $\mu$ on $[0,1)$ we associate a trace $\tau_{\mu}\in T(A)$ by
\[
\tau_\mu(f)=\int_{[0,1)}\tau_{nk}(f(t))d\mu(t).
\]
If $A=B_{n,k}$ is a generalised Razak block, to a Borel probability measure $\mu$ on $(0,1)\cup\{\infty_i\}$ we associate the trace $\tau_\mu\in T(A)$ by
\[
\tau_\mu(f)=\int_{(0,1)}\tau_{2nk}(f(t))d\mu(t)+\tau_k(a_f)\mu(\infty_1)+\tau_k(b_f)\mu(\infty_2).
\] 
It is routine to check that $\tau_{\mu_\tau}=\tau$ and $\mu_{\tau_\mu}=\mu$.

A trace $\tau\in T(A)$ is called \emph{faithful} if for all $f\in A$ we have $\tau(ff^*)=0$ if and only if $f=0$. For (generalised) Razak blocks, $\tau$ is faithful if and only if $\mu_\tau(U)\neq 0$ whenever $U\subseteq(0,1)$ is a nonempty open set. If $A$ is a (generalised) Razak block, a trace $\tau\in T(A)$ is called \emph{diffuse} if it is associated to an atomless measure $\mu$ on $(0,1)$, that is, if $\tau=\tau_\mu$ and $\mu(\{x\})=0$ for all $x\in(0,1)\cup\{\infty_i\}$. We denote by $T_f$ and $T_{fd}$ the sets of all faithful, and faithful diffuse traces respectively.

\begin{remark}\label{remark:nontracial}
If $\varphi$ is a unital $^*$-homomorphism between unital \cstar-algebras $A$ and $B$, then for all $\tau\in T(B)$ there is $\sigma\in T(A)$ such that $\varphi\colon (A,\sigma)\to (B,\tau)$; that is, the pullback of a trace is always a trace. In the nonunital case, the pullback functional of a trace $\tau$ need not be a trace. For example, let $\varphi\colon A_{2,1}\to A_{2,2}$ be defined as
\[
\varphi(f)(t)=\begin{cases}\begin{pmatrix}
f(2t)&0\\
0&f(2t)
\end{pmatrix}& 0\leq t\leq 1/2\\
u(t)\begin{pmatrix}f(0)&0\\
0&0
\end{pmatrix}
u(t)^* & 1/2\leq t\leq 1
\end{cases}
\] 
where $u(1/2)$ is the permutation unitary that swaps the second and the third rows of matrices in $M_4$, $u(1)=1$ and $u(t)$ is any continuous path of unitaries connecting $u(1/2)$ to $1$. If $\tau\in T_f(A_{2,2})$ and $\sigma=\varphi^*(\tau)=\tau\circ\varphi$ is the pullback functional of $\tau$, then
\[
\norm{\sigma}=\mu_\tau([0,1/2])+\frac{1}{2}\mu_\tau([1/2,1])<1.
\]
If $A$ is a (generalised) Razak block and $\pi\colon A\to M_m$ is a representation, the pullback functional of the trace $\tau_m$ is a state (and therefore a trace) if and only if the number $r_{0}$ of Proposition~\ref{prop:repr} is $0$. We will use this in Proposition~\ref{prop:tpstarhoms}. 
\end{remark}

\subsection{The classes} 
We now introduce the classes we are going to work with.

Let
\[
\Obj_{R}=\{(A_{n,k},\tau)\mid n,k\in \NN, \tau\in T_{fd}(A_{n,k})\}
\]
and
\[
\Mor_{R}= \{\varphi\colon (A,\sigma)\to(B,\tau)\mid (A,\sigma),(B,\tau)\in\Obj_{R}\}.
\]
\begin{defin}
$\mathcal K_{\mathcal W}$ is the category with objects $\Obj_R$ and morphisms $\Mor_R$.
\end{defin}
Let $\mathcal P$ be the class of all prime numbers. A supernatural number of infinite type $\bar p$ is an expression of the form $\bar p=\prod_{p\in\mathcal P} p^{\ell_p}$, where $\ell_p\in\{0,\infty\}$. We say that an integer $k\in\zet$ divides $\bar p$ if every prime in the unique factorisation of $|k|$ corresponds to a prime whose $\ell_p$ is infinite. $0$ does not divide any supernatural number, while $-1$ and $1$ divide all of them.
Let
\[
\Obj_{GR}=\{(B_{n,k}, \tau)\mid \tau\in T_{fd}(B_{n,k})\}.
\]
Let 
\[
\Mor_{GR,0}= \{\varphi\colon (A,\sigma)\to(B,\tau)\mid (A,\sigma),(B,\tau)\in\Obj_{GR}\},
\]
and
\[
\Mor_{GR,1}= \{\varphi\colon (A,\sigma)\to(B,\tau)\mid (A,\sigma),(B,\tau)\in\Obj_{GR}\text{ and } |K_0(\varphi)|=1\}.
\]
For a supernatural number of infinite type $\bar p$, let
\[
\Mor_{GR,\bar p}= \{\varphi\colon (A,\sigma)\to(B,\tau)\mid (A,\sigma),(B,\tau)\in\Obj_{GR}\text{, }K_0(\varphi)\text{ divides }\bar p\}.
\]
\begin{defin}
$\mathcal K_{0}$ is the category with objects $\Obj_{GR}$ and morphisms $\Mor_{GR,0}$. $\mathcal K_{1}$ is the category with objects $\Obj_{GR}$ and morphisms $\Mor_{GR,1}$. If $\bar p$ is a supernatural number of infinite type, $\mathcal K_{\bar p}$ is the category with objects $\Obj_{GR}$ and morphisms $\Mor_{GR,\bar p}$.
\end{defin}

\begin{proposition}\label{prop:WPPCCP}
The classes $\mathcal K_\mathcal W$, $\mathcal K_0$, $\mathcal K_1$ and $\mathcal K_{\bar p}$, where $\bar p$ is a supernatural number of infinite type, have the WPP and the CCP.
\end{proposition}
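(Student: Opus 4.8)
The plan is to verify CCP and WPP separately; CCP is essentially free, and the content is in WPP.

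For CCP: by Remark~\ref{rem:CCP}, all predicates and functions in the language of tracial \cstar-algebras are $1$-Lipschitz, so the maps sending tuples to predicate values and to function values are automatically Cauchy-continuous. Nothing further is needed.

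For WPP, I must show that each space $\mathcal K_n$, equipped with the pseudo-metric $d^{\mathcal K}$, is separable, for each of the classes $\mathcal K_\mathcal W$, $\mathcal K_0$, $\mathcal K_1$, $\mathcal K_{\bar p}$. First, observe that an element of $\mathcal K_n$ is (the isomorphism type of) a pair $((A_{n',k'},\tau),\bar a)$ with $\bar a\in A^n$ generating $A$; so it suffices to exhibit a countable family of such tuples that is $d^{\mathcal K}$-dense. The plan is:
\begin{itemize}
\item \emph{Countably many underlying algebras.} There are only countably many Razak blocks $A_{n',k'}$ (resp.\ generalised Razak blocks $B_{n',k'}$), indexed by $(n',k')\in\NN^2$.
\item \emph{Countably many traces, up to approximation.} For a fixed block $A_{n',k'}$, a faithful diffuse trace corresponds (via the correspondence $\tau\leftrightarrow\mu_\tau$ described in \S\ref{subsection:blocks}) to an atomless Borel probability measure on $(0,1)$ (resp.\ on $(0,1)\cup\{\infty_i\}$, with atomless part on $(0,1)$) that is faithful in the sense that it charges every nonempty open subset of $(0,1)$. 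The space of Borel probability measures on a compact metric space is itself a compact metric space in the $w^*$-topology, hence separable; one picks a countable $w^*$-dense subset and, since the faithful diffuse measures are $w^*$-dense among all probability measures (e.g.\ by perturbing toward a measure with full support and no atoms, such as a convex combination with a small multiple of Lebesgue measure and then removing atoms), one may take the countable dense set to consist of faithful diffuse traces.
\item \emph{Countably many generating tuples, up to approximation.} For a fixed object $(A,\tau)$, the set of $n$-tuples generating $A$ is separable in the norm of $A^n$ (since $A$ is a separable \cstar-algebra); fix a countable norm-dense set of generating tuples.
\item \emph{Passing to $d^{\mathcal K}$.} Assemble the countable set $\mathcal D\subseteq\mathcal K_n$ consisting of all $((A,\tau),\bar a)$ with $A$ one of the countably many blocks, $\tau$ from the chosen countable dense set of faithful diffuse traces on $A$, and $\bar a$ from the chosen countable dense set of generating tuples. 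It remains to check that $\mathcal D$ is $d^{\mathcal K}$-dense. Given an arbitrary $((A,\tau),\bar a)\in\mathcal K_n$, one approximates $\tau$ in $w^*$ by some $\tau'$ from the countable set and $\bar a$ in norm by some $\bar a'$ from the countable set. The norm approximation gives $d^{\mathcal K}((A,\tau'),\bar a),((A,\tau'),\bar a'))$ small directly (take $B=A$, $\alpha_1=\alpha_2=\id$). For the trace perturbation, one argues that $((A,\tau),\bar a)$ and $((A,\tau'),\bar a)$ are $d^{\mathcal K}$-close when $\tau,\tau'$ are $w^*$-close: this requires producing an object $C$ and morphisms $\alpha_i\colon (A,\tau^{(i)})\to C$ that nearly agree on $\bar a$. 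Here one uses the diagonal maps of \S\ref{S.diagonal} (to be constructed there, and which witness JEP): a diagonal embedding of $A$ into a larger block can be arranged to pull back a trace on the target that is $w^*$-close to a prescribed trace, while the two copies of $A$ sit close together. Concretely, one embeds $(A,\tau)$ and $(A,\tau')$ into a common block $C$ via diagonal maps landing near the identity, using that the pulled-back trace on $C$ can be tuned and that $w^*$-closeness of traces translates, via such embeddings, into $d^{\mathcal K}$-closeness of the tuples.
\end{itemize}

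The main obstacle is the last point: translating $w^*$-closeness of traces on a fixed block into $d^{\mathcal K}$-closeness of the corresponding points of $\mathcal K_n$. This cannot be done by an honest $\mathcal L$-embedding fixing $A$ pointwise (which would force the traces to be equal), so one must genuinely use the flexibility of diagonal maps from \S\ref{S.diagonal} and the fact that Razak blocks absorb small trace perturbations up to small movement in $d^{\mathcal K}$. I expect the cleanest route is: show that for each block $A$ the map $T_{fd}(A)\to\mathcal K_n$, $\tau\mapsto((A,\tau),\bar a)$, is $w^*$-to-$d^{\mathcal K}$ continuous, by building, for $w^*$-close $\tau,\tau'$, a block $C$ with a trace $\rho$ and diagonal morphisms $(A,\tau)\to(C,\rho)\leftarrow(A,\tau')$ that are pointwise norm-close on the finite generating set; the existence of such $C$, $\rho$ and morphisms is exactly the kind of statement established when proving JEP for these classes via diagonal maps, so it may be invoked once \S\ref{S.diagonal} is in place. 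Together with separability of $T_{fd}(A)$ in $w^*$ and of the generating tuples in norm, and countability of the blocks, this yields separability of $\mathcal K_n$, i.e.\ the WPP.
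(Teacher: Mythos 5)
Your CCP argument matches the paper's exactly. Your WPP argument, however, takes a noticeably heavier route than the paper's and, as you yourself flag, leaves a real gap at the crucial step.

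The paper's observation is that the trace can be eliminated \emph{exactly}, not just approximately, because of the transition maps of Proposition~\ref{prop:onetoanother}: for a fixed (generalised) Razak block $A$ and \emph{any} two $\sigma,\tau\in T_{fd}(A)$, there is a trace-preserving automorphism $\varphi_{\tau\mapsto\sigma}\colon(A,\tau)\to(A,\sigma)$ lying in $\Mor_{\mathcal K}$ (and trivial on $K_0$, so it is admissible in $\mathcal K_1$ and $\mathcal K_{\bar p}$ as well). Consequently, for any $\bar b$ generating $A$, the points $((A,\tau),\bar b)$ and $((A,\sigma),\varphi_{\tau\mapsto\sigma}(\bar b))$ are at $d^{\mathcal K}$-distance \emph{zero} (take $B=(A,\sigma)$, $\alpha_1=\varphi_{\tau\mapsto\sigma}$, $\alpha_2=\id$). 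Thus one fixes a single reference trace per block (say the Lebesgue trace), picks a countable norm-dense set of generating $n$-tuples for each block, and the resulting countable family is $d^{\mathcal K}$-dense in $\mathcal K_n$; separability of traces in the $w^*$-topology plays no role. Your route instead approximates the trace in $w^*$ and then tries to turn $w^*$-closeness of traces into $d^{\mathcal K}$-closeness of tuples. That reduction is not free: it needs a quantitative statement (of the kind the paper only develops later, in \S\ref{S.distances}, e.g.\ the relation between $\mathfrak b$, $d_\partial$, and $d_{\mathcal U}^G$) linking small $w^*$ perturbations of the trace to small pointwise-norm perturbations of trace-preserving maps on a finite generating set. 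Merely invoking the existence of diagonal maps that witness JEP does not supply this control, and you acknowledge the step is incomplete. The shortcut you are missing is that you do not need to fix $A$ pointwise; you may move $\bar a$ by the transition automorphism, at the cost of nothing in $d^{\mathcal K}$.
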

\sloppy
\begin{proof}
CCP is obvious, as all functions and predicates involved are $1$-Lipschitz on the unit ball. For WPP, notice that there are only countably many (generalised) Razak blocks and each of them is separable. The transition maps of Proposition~\ref{prop:onetoanother} give the WPP.
\end{proof}
\fussy

\section{Diagonal maps} \label{S.diagonal}
The following definitions are designed to identify a class of maps between (generalised) Razak block which are treatable. If $A$ and $B$ are (generalised) Razak blocks, $t\in[0,1]$ and $\varphi\colon A\to B$ is a $^*$-homomorphism, we let $\pi_{\varphi,t}$ be the representation obtained by 
\[
f\mapsto\varphi(f)(t).
\]
We define the representations $\pi_{\varphi,\infty}$ (if $B=A_{n,k}$ is a Razak block) and $\pi_{\varphi,\infty_1}$ and $\pi_{\varphi,\infty_2}$ (if $B=B_{n,k}$ is a generalised Razak block): $\pi_{\varphi,\infty}$ is the $k$-dimensional representation such that 
\[
\pi_{\varphi,0}=\diag(\underbrace{\pi_{\varphi,\infty}}_n)
\]
and $\pi_{\varphi,\infty_1}, \pi_{\varphi,\infty_2}$ are the $k$-dimensional representations such that 
\[
\pi_{\varphi,0}=\diag(\underbrace{\pi_{\varphi,\infty_1},\pi_{\varphi,\infty_2}}_n).
\] 
\begin{defin}
Let $A\subseteq C([0,1],M_n)$ and $B\subseteq C([0,1],M_m)$ be \cstar-algebras, and let $\varphi\colon A\to B$ be a $^*$-homomorphism. A point $t\in [0,1]$ is said to be \emph{regular} for $\varphi$ if there are $s_1,\ldots,s_j\in [0,1]$  such that 
\[
\pi_{\varphi,t}\sim_u\diag(\pi_{s_1},\ldots,\pi_{s_j}).
\]
\end{defin}

In the class of $m$-dimensional representations of $A$, the ones unitarily equivalent to those of the form $\diag(\pi_{s_1},\ldots,\pi_{s_j})$ for some points $s_1,\ldots,s_j\in [0,1]$ form a closed set in the hull-kernel topology. The following is then immediate.
\begin{proposition} \label{prop:regclosed}
Let $A\subseteq C([0,1],M_n)$ and $B\subseteq C([0,1],M_m)$ be \cstar-algebras and let $\varphi\colon A\to B$ be a $^*$-homomorphism. The set of regular points is closed.\qed
\end{proposition}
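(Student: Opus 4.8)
The plan is to realise the set of regular points of $\varphi$ as the preimage of a closed set under a continuous map. Fix $\varphi\colon A\to B$ with $B\subseteq C([0,1],M_m)$, and let $\Hom(A,M_m)$ denote the set of $\ast$-homomorphisms $A\to M_m$, equipped with the point--norm topology (pointwise convergence, with $M_m$ carrying its norm); this is a Hausdorff space. The assignment $\Phi\colon t\mapsto\pi_{\varphi,t}$ is a map $[0,1]\to\Hom(A,M_m)$, and it is continuous: for each fixed $f\in A$ the element $\varphi(f)\in B$ is a norm-continuous $M_m$-valued function on $[0,1]$, so $t\mapsto\pi_{\varphi,t}(f)=\varphi(f)(t)$ is norm-continuous.

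The only step with real content is to show that the set
\[
S=\{\rho\in\Hom(A,M_m)\mid \rho\sim_u\diag(\pi_{s_1},\dots,\pi_{s_j})\text{ for some }s_1,\dots,s_j\in[0,1]\}
\]
is closed in $\Hom(A,M_m)$; this is the content of the sentence preceding the statement. Since $\diag(\pi_{s_1},\dots,\pi_{s_j})$ acts on $\ce^{jn}$ whereas an element of $S$ acts on $\ce^m$, unitary equivalence forces $jn=m$; if $n\nmid m$ then $S=\emptyset$ and $\varphi$ has no regular points, so we may assume $j=m/n$ is fixed. Exactly as for $\Phi$, the map $s\mapsto\pi_s$ is continuous $[0,1]\to\Hom(A,M_n)$, so $\{\pi_s\mid s\in[0,1]\}$ is compact, and hence so is $K=\{\diag(\pi_{s_1},\dots,\pi_{s_j})\mid s_1,\dots,s_j\in[0,1]\}\subseteq\Hom(A,M_m)$, being the continuous image of $[0,1]^j$. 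Finally, the conjugation map $\Hom(A,M_m)\times U(m)\to\Hom(A,M_m)$, $(\rho,u)\mapsto u\rho u^*$, is jointly continuous and $U(m)$ is compact, so $S=\{u\rho u^*\mid\rho\in K,\ u\in U(m)\}$ is the continuous image of the compact set $K\times U(m)$, hence compact; and a compact subset of a Hausdorff space is closed.

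It then suffices to observe that $t$ is regular for $\varphi$ exactly when $\pi_{\varphi,t}\in S$, so that the set of regular points equals $\Phi^{-1}(S)$, which is closed by continuity of $\Phi$ and closedness of $S$. The argument is otherwise routine; the one place that benefits from care is the dimension bookkeeping that pins $j$ down, since this is what allows one to avoid worrying about limits of direct-sum representations in which the number of point-evaluation summands varies.
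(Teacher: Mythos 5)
Your proof is correct and fills in carefully what the paper leaves as ``immediate.'' The strategy is the same: realise the set of regular points as $\Phi^{-1}(S)$ where $\Phi(t)=\pi_{\varphi,t}$, then show $S$ closed and $\Phi$ continuous. The paper, however, asserts closedness of $S$ ``in the hull-kernel topology'' without elaboration; that phrase is a little awkward here, since the hull-kernel topology on the relevant space of representations is typically non-Hausdorff (the paper itself emphasises this for the spectra of (generalised) Razak blocks), so compactness alone would not automatically give closedness there. Your choice to work instead in $\Hom(A,M_m)$ with the point--norm topology is cleaner: it is manifestly Hausdorff, so ``compact image of $K\times U(m)$ implies closed'' applies directly, and the continuity of $\Phi$ and of $s\mapsto\pi_s$ become one-line observations about norm continuity of $\varphi(f)$ and $f$. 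Your final remark about dimension bookkeeping pinning down $j$ is also a genuine subtlety worth flagging, since without fixing $j$ one would have to worry about limits in which the number of summands varies; the argument handles this correctly by reducing to the case $j=m/n$ (or $S=\emptyset$).
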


\begin{defin}\label{defin:diag}
Let $A\subseteq C([0,1],M_n)$ and $B\subseteq C([0,1],M_m)$ be \cstar-algebras. A $^*$-homomorphism $\varphi\colon A\to B$ is called \emph{diagonal} if $n$ divides $m$ and there are continuous maps $\xi_i\colon [0,1]\to[0,1]$, for $i\leq m/n$, such that  $\xi_i\leq\xi_{i+1}$ for all $i$, and
\[
\pi_{\varphi,t}\sim_u\diag(\pi_{\xi_1(t)},\ldots,\pi_{\xi_{m/n}(t)}), \text{ for all } t\in [0,1].
\]
The maps $\{\xi_i\}$ are said to be \emph{associated} to $\varphi$. 
\end{defin}

As any two faithful diffuse probability measures on $(0,1)$ can be sent one to another via a homeomorphism of $[0,1]$, the same can be said for traces on (generalised) Razak blocks. 

\begin{proposition}\label{prop:onetoanother}
Let $A$ be a (generalised) Razak block and let $\sigma,\tau\in T_{fd}(A)$. Then there is a diagonal automorphism $\varphi\colon (A,\sigma)\to (A,\tau)$ which is trivial on $K$-theory.
\end{proposition}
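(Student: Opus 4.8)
The plan is to build the automorphism $\varphi$ directly from a homeomorphism of $[0,1]$ that transports the measure $\mu_\sigma$ to $\mu_\tau$. First I would recall that, since $\sigma,\tau\in T_{fd}(A)$, the associated measures $\mu_\sigma$ and $\mu_\tau$ on $(0,1)\cup\{\infty_i\}$ each decompose (in the generalised case) as $\lambda_1\delta_{\infty_1}+\lambda_2\delta_{\infty_2}+\lambda_3\mu$ with $\mu$ atomless and fully supported on $(0,1)$ (and similarly, but more simply, in the Razak case where there is no $\infty$-part and the measure is an atomless fully-supported probability measure on $(0,1)$). The cumulative distribution function $F_\sigma(t)=\mu_\sigma((0,t])$ is then a continuous, strictly increasing bijection of $[0,1]$ onto $[\lambda_1,\lambda_1+\lambda_3]$ in the generalised case (onto $[0,1]$ in the Razak case), and likewise for $F_\tau$. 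The key point is that I cannot in general compose $F_\tau^{-1}\circ F_\sigma$ unless the weights $\lambda_i$ agree for $\sigma$ and $\tau$; so the construction has to proceed in two stages — a stage handling the interior measure and a stage (only in the generalised case) handling the boundary weights — or, more cleanly, one builds the conjugating map so that it is the identity near the endpoints and absorbs the weight discrepancy into the interior. Since any two atomless fully-supported probability measures on $(0,1)$ are carried to one another by a homeomorphism $h\colon[0,1]\to[0,1]$ fixing $0$ and $1$, I would first reduce to the case $\lambda_i^\sigma=\lambda_i^\tau$ by such an $h$ (this is the classical statement quoted just before the proposition: "any two faithful diffuse probability measures on $(0,1)$ can be sent one to another via a homeomorphism of $[0,1]$").

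Given such a homeomorphism $h$ of $[0,1]$ with $h(0)=0$, $h(1)=1$, and $h_*\mu_\sigma=\mu_\tau$, I would define $\varphi\colon A\to A$ by $\varphi(f)=f\circ h$, i.e. $\pi_{\varphi,t}=\pi_{h(t)}$ for $t\in[0,1]$. One checks that $\varphi(f)$ again satisfies the boundary conditions defining $A$: because $h(0)=0$ and $h(1)=1$, the value $\varphi(f)(0)=f(0)$ and $\varphi(f)(1)=f(1)$ are unchanged, so $a_{\varphi(f)}=a_f$ (and $b_{\varphi(f)}=b_f$), whence $\varphi(f)\in A$. Thus $\varphi$ is a $^*$-homomorphism $A\to A$, and it is a diagonal map in the sense of Definition~\ref{defin:diag} with the single associated function $\xi_1=h$ (here $m/n=1$). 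It is an automorphism because $h$ is a homeomorphism, with inverse $f\mapsto f\circ h^{-1}$. Trace preservation, $\varphi\colon(A,\sigma)\to(A,\tau)$, is exactly the change-of-variables identity
\[
\tau(\varphi(f))=\int \tau_N(f(h(t)))\,d\mu_\sigma(t)+(\text{boundary terms})=\int \tau_N(f(s))\,d\mu_\tau(s)+(\text{boundary terms})=\sigma(f)\cdot\ ,
\]
wait—more precisely it is $\tau(\varphi(f))=\sigma(f)$, which follows from $h_*\mu_\sigma=\mu_\tau$ together with the fact that $h$ fixes the endpoints so the $\delta_{\infty_i}$-weights are matched in the earlier reduction step. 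Finally, $\varphi$ is trivial on $K$-theory: in the generalised case $K_0(\varphi)$ is computed from the multiplicities $r_1,r_2$ of $\pi_{\infty_1},\pi_{\infty_2}$ in $\pi_{\varphi,\infty_1}$ and $\pi_{\varphi,\infty_2}$ via Lemma~\ref{lemma:KtheoryGen}, and since $\varphi(f)(0)=f(0)$ we get $\pi_{\varphi,\infty_i}=\pi_{\infty_i}$, so $r_1=r_2$ (indeed both equal $1$ with the other equal to $0$, or rather the standard picture gives $K_0(\varphi)=1-1=0$), hence $K_0(\varphi)=0$.

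I expect the only genuine subtlety to be the bookkeeping at the points at infinity in the generalised case: the decomposition $\mu_\tau=\lambda_1\delta_{\infty_1}+\lambda_2\delta_{\infty_2}+\lambda_3\mu$ means a plain homeomorphism of $[0,1]$ moves only the interior part, so one must either assume or first arrange that the two traces share the same $\lambda_i$ — and to arrange it one needs a preliminary automorphism that redistributes mass between the interior and the boundary. This can be done by a diagonal map of the form used in Remark~\ref{remark:nontracial} (conjugating a copy of the boundary fibre into a short interval near an endpoint and back), which is trace-altering in a controlled way and $K_0$-trivial; composing it with the homeomorphism-induced map above yields the desired $\varphi$. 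The Razak case has no boundary weights and is immediate. Everything else — that $\varphi$ lands in $A$, is an automorphism, is diagonal, preserves the trace, and kills $K$-theory — is a routine verification of the kind already carried out for diagonal maps in this section.
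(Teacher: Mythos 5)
Your core construction is exactly the paper's: build a homeomorphism $\xi$ of $[0,1]$ that transports one measure to the other, and set $\varphi(f)=f\circ\xi$. But two issues undermine the write-up.

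First, the ``boundary weight'' concern is a misreading of the paper's definition. For a generalised Razak block, a trace is called \emph{diffuse} precisely when the associated measure $\mu$ satisfies $\mu(\{x\})=0$ for \emph{all} $x\in(0,1)\cup\{\infty_1,\infty_2\}$, including the points at infinity. So for $\sigma,\tau\in T_{fd}(A)$ both $\mu_\sigma$ and $\mu_\tau$ are supported entirely in $(0,1)$, the weights $\lambda_1=\lambda_2=0$ on both sides, and there is nothing to reduce. Your proposed preliminary ``redistribution'' step is therefore not just unnecessary but also problematic as stated: the map of Remark~\ref{remark:nontracial} is not an automorphism (it goes $A_{2,1}\to A_{2,2}$), and the whole point of that remark is that such a map does \emph{not} pull a trace back to a trace. (A separate minor slip: you want $\xi_*\mu_\tau=\mu_\sigma$, not $h_*\mu_\sigma=\mu_\tau$, for $\varphi(f)=f\circ\xi$ to satisfy $\tau\circ\varphi=\sigma$; your displayed change-of-variables is written with the measures on the wrong sides, which you noticed mid-sentence but did not correct.)

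Second, the $K$-theory computation is wrong. For a $^*$-homomorphism between generalised Razak blocks, $K_0(\varphi)$ is the integer $\ell$ of Remark~\ref{remark:generator}, computed via Proposition~\ref{prop:Ktheory2}, not via Lemma~\ref{lemma:KtheoryGen} (which concerns representations into matrix algebras). Since $\xi(0)=0$, the map $\varphi$ fixes the fibre at $0$, so $\pi_{\varphi,\infty_i}=\pi_{\infty_i}$, giving $r_{1,1}=r_{2,2}=1$ and $r_{1,2}=r_{2,1}=0$, hence $K_0(\varphi)=\tfrac12(1-0+1-0)=1$. You obtain $K_0(\varphi)=0$, which cannot be correct: an automorphism induces an isomorphism on $K_0\cong\mathbb{Z}$, so the value must be $\pm1$. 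The conclusion ``trivial on $K$-theory'' means $K_0(\varphi)$ is the identity, i.e.\ $K_0(\varphi)=1$, which is what the paper records.
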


\begin{proof}
For every $t\in [0,1]$, there exists (by faithfulness) a unique $s_t\in[0,1]$ such that $\mu_\tau([0,t])=\mu_\sigma([0,s_t])$. The function $\xi=\xi_{\sigma\mapsto\tau}\colon[0,1]\to[0,1]$ defined by $t\mapsto s_t$ is a homeomorphism, and the map $\varphi=\xi^*$, that is, $\varphi(f)(t)=f(\xi(t))$, is the required automorphism of $A$. That $K_0(\varphi)=1$ follows from Remark~\ref{remark:generator}.
\end{proof}

We call such an automorphism a \emph{transition map} and denote it by $\varphi_{\sigma\mapsto\tau}$.
As we have seen in Remark~\ref{remark:nontracial}, not all $^*$-homomorphisms are trace preserving. Yet, this is (often) the case for diagonal maps:
\sloppy
\begin{proposition}\label{prop:tpstarhoms}
Let $A$ and $B$ be (generalised) Razak blocks, and let $\varphi\colon A\to B$ be a $^*$-homomorphism. The following are equivalent:
\begin{enumerate}
\item\label{p1} $\varphi$ is diagonal with associated maps $\{\xi_i\}_{i\leq j}$, and
\[
\mu_\lambda(\{t\mid\exists i (\xi_i(t)=s)\})=0 \quad\text{ for every }\: s\in[0,1],
\]
$\mu_\lambda$ being the Lebesgue measure;
\item\label{p2} there are $\sigma\in T_{fd}(A)$ and $\tau\in T_{fd}(B)$ such that 
\[
\varphi\colon (A,\sigma)\to(B,\tau);
\]
\item\label{p3} for all $\tau\in T_{fd}(B)$ there is $\sigma\in T_{fd}(A)$ such that 
\[
\varphi\colon (A,\sigma)\to(B,\tau).
\]
\end{enumerate}
\end{proposition}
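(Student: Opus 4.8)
The plan is to establish the cycle $\eqref{p1}\Rightarrow\eqref{p3}\Rightarrow\eqref{p2}\Rightarrow\eqref{p1}$. The implication $\eqref{p3}\Rightarrow\eqref{p2}$ is immediate (faithful diffuse traces on $B$ exist, since $B$ is a generalised Razak block and one can take $\tau=\tau_\mu$ for any atomless fully supported $\mu$ on $(0,1)$). So the content is in the other two arrows, and the computational heart of both is the same: for a diagonal map $\varphi$ with associated maps $\{\xi_i\}$, one has for $f\in B$ (say $B\subseteq C([0,1],M_m)$, $A\subseteq C([0,1],M_n)$, $N=m/n$) the pointwise identity $\tau_N(\varphi(f)(t))=\frac1N\sum_{i\le N}(\text{value of }\pi_{\xi_i(t)}\text{ normalised trace on }f)$, so that integrating against a measure $\mu$ on $[0,1]$ gives
\[
\tau_\mu(\varphi(f))=\frac1N\sum_{i\le N}\int_{[0,1]}(\tau\text{-density of }f\text{ at }\xi_i(t))\,d\mu(t)=\frac1N\sum_{i\le N}\int f\,d((\xi_i)_*\mu).
\]
In other words the pullback functional of $\tau_\mu$ is $\tau_\nu$ where $\nu=\frac1N\sum_i(\xi_i)_*\mu$. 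One must be slightly careful with the `points at infinity' for $B_{n,k}$ (the $\xi_i$ take values in $[0,1]$, and $\xi_i(t)\in\{0,1\}$ contributes to the $\infty_j$-part or is killed, as recorded in the $r_0,r_1,r_2$ bookkeeping of Proposition~\ref{prop:repr} and Remark~\ref{remark:nontracial}); this is exactly where the condition $r_0=0$, i.e. that $\varphi$ is \emph{diagonal} rather than merely a sum of point and $\infty$-representations, is used to guarantee $\norm{\nu}=1$.

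**For $\eqref{p1}\Rightarrow\eqref{p3}$:** given $\tau=\tau_\mu\in T_{fd}(B)$ with $\mu$ atomless and fully supported on $(0,1)$, set $\sigma=\tau_\nu$ with $\nu=\frac1N\sum_i(\xi_i)_*\mu$ as above; by the computation, $\varphi\colon(A,\sigma)\to(B,\tau)$. It remains to check $\sigma\in T_{fd}(A)$, i.e. that $\nu$ is a probability measure, atomless, and fully supported on $(0,1)$. Norm one (no mass escapes to $\{0,1\}$) is precisely the extra hypothesis $\mu_\lambda\big(\{t:\exists i\,\xi_i(t)=s\}\big)=0$ applied with $s\in\{0,1\}$ — wait, more carefully: one needs $\mu(\xi_i^{-1}\{0\})=\mu(\xi_i^{-1}\{1\})=0$, and since $\mu$ is fully supported and mutually absolutely continuous with Lebesgue on $(0,1)$, the Lebesgue-null condition transfers to $\mu$-null. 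Atomlessness of $\nu$: $\nu(\{s\})=\frac1N\sum_i\mu(\xi_i^{-1}\{s\})$, and again the Lebesgue-measure hypothesis forces $\xi_i^{-1}\{s\}$ to be $\mu$-null for every $s$. Full support on $(0,1)$: each $\xi_i$ is continuous with $\xi_i\le\xi_{i+1}$ on $[0,1]$, and one can observe that $\xi_1$ takes values $0$ near $t=0$ region-free... actually full support follows because, by faithfulness of $\tau$ (every nonempty open $U\subseteq(0,1)$ has $\mu$-positive... ) — here I'd argue: if some open $V\subseteq(0,1)$ were $\nu$-null then $f\in A$ supported on $V$ would have $\varphi(f)$ supported on $\{t:\text{some }\xi_i(t)\in V\}$ which is $\mu$-null hence $\tau(\varphi(f)\varphi(f)^*)=0$, and since $\varphi$ is injective (Proposition~\ref{prop:Ktheory}\eqref{noprojc2}) and $\tau$ faithful on $B$, $f=0$; so $V=\emptyset$.

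**For $\eqref{p2}\Rightarrow\eqref{p1}$:** suppose $\varphi$ preserves some faithful diffuse $\sigma,\tau$. First, $\varphi$ must be diagonal: since its pullback of the faithful trace $\tau$ is the faithful (hence norm-one) trace $\sigma$, the pullback functional of $\tau_m$ through each fibre representation $\pi_{\varphi,t}$ cannot lose norm for $\mu_\tau$-a.e.\ $t$ — combined with Proposition~\ref{prop:repr}, continuity in $t$, and the fact that the spectrum decomposition is locally constant away from a small set, one deduces $r_0\equiv0$ and that the point-representation data varies continuously, which after reordering gives continuous $\xi_i$ with $\xi_i\le\xi_{i+1}$; this is the step I expect to be the main obstacle, because extracting globally continuous $\xi_i$ from the pointwise unitary-equivalence data requires care (the $\xi_i$ are the ordered `eigenvalue functions' of the family $\pi_{\varphi,t}$, and continuity of ordered roots of a continuously varying finite multiset is the classical fact to invoke, but one must also rule out a nonzero $\infty$-part contributing, using $\norm{\sigma}=1$). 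Once diagonality is in hand, the measure computation gives $\sigma=\tau_\nu$ with $\nu=\frac1N\sum_i(\xi_i)_*\mu_\tau$; since $\sigma$ is diffuse, $0=\nu(\{s\})=\frac1N\sum_i\mu_\tau(\xi_i^{-1}\{s\})$ for every $s\in[0,1)$, and handling $s=1$ via $\norm\sigma=1$, we get $\mu_\tau(\{t:\exists i\,\xi_i(t)=s\})=0$ for all $s\in[0,1]$; as $\mu_\tau$ and Lebesgue measure are mutually absolutely continuous on $(0,1)$ (faithful diffuse $\tau$), the same holds for $\mu_\lambda$, which is exactly the displayed condition in \eqref{p1}.
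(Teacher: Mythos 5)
Your cycle $(1)\Rightarrow(3)\Rightarrow(2)\Rightarrow(1)$ and the measure-theoretic heart of $(1)\Rightarrow(3)$ (pullback functional realised as $\tau_\nu$ with $\nu=\frac1N\sum_i(\xi_i)_*\mu_\tau$; injectivity of $\varphi$ giving full support; the Lebesgue hypothesis giving atomlessness) are indeed the paper's route. The problem is with $(2)\Rightarrow(1)$, where you flag the step yourself as ``the main obstacle'' and then do not close it.

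Concretely: you assert that the pullback of the fibre trace ``cannot lose norm for $\mu_\tau$-a.e.\ $t$'' and that ``the spectrum decomposition is locally constant away from a small set,'' and from this deduce $r_0\equiv0$ and continuity of the $\xi_i$. Neither claim is established, and the second is simply not how it works — there is no a priori local constancy, and a pointwise a.e.\ statement does not by itself yield an everywhere statement, which is the crux. The paper's actual argument has two pieces you omit. First, the set $X$ of \emph{regular} points — those $t$ at which $\pi_{\varphi,t}$ decomposes as a direct sum of interior point evaluations $\pi_s$ with \emph{no} $\infty$-summand and \emph{no} zero-summand — is dense: if an open $U$ consisted of non-regular points, then for every contraction $f$ in the ideal $\mathcal I=\{f\in A : a_f=0\}$ one would have $\tau(\varphi(ff^*))\leq\mu_\tau([0,1]\setminus U)+\frac{n'k'-1}{n'k'}\mu_\tau(U)<1$, contradicting that $\sigma$ attains norm $1$ on contractions in $\mathcal I$ (this uses both that $\sigma$ is a \emph{state} and that it is \emph{diffuse}, i.e., has no atom at $\infty$). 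Second, Proposition~\ref{prop:regclosed} says the regular set is closed, so density upgrades to $X=[0,1]$; only then does one order the points $s_i^t$ to obtain continuous $\xi_i$. A related slip in your sketch: you say $\|\sigma\|=1$ rules out the $\infty$-summands. It does not — an $\infty$-summand does not decrease the norm of the pullback of $\tau_m$ (only a zero-summand does); it is the \emph{diffuseness} of $\sigma$ that kills the $\infty$-parts, which is exactly why the paper tests against the ideal $\mathcal I$ rather than against all of $A$.

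One smaller caveat, shared with the paper but worth recording: you assert that a faithful diffuse probability measure $\mu$ on $(0,1)$ is mutually absolutely continuous with Lebesgue. That is not automatic — $\tfrac12\lambda+\tfrac12\mu_C$ for a Cantor measure $\mu_C$ is faithful, atomless, and not absolutely continuous with respect to $\lambda$ — so the transfer of the $\mu_\lambda$-null condition in $(1)$ to a $\mu_\tau$-null condition needs more justification than the bare invocation of mutual absolute continuity.
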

\fussy

\begin{proof}
We show that \eqref{p1}$\Rightarrow$\eqref{p3} and that \eqref{p2}$\Rightarrow$\eqref{p1}, as \eqref{p3}$\Rightarrow$\eqref{p2} is obvious. We only give the proof in the case where $A=A_{n,k}$ and $B=A_{n',k'}$ are Razak blocks, and leave to the reader to check that the proof generalises.

\eqref{p1}$\Rightarrow$\eqref{p3}: Since $\varphi$ is diagonal, $f(t)$ is $nk$-dimensional, therefore $nkj=n'k'$. Fix $\tau\in T_{fd}(B)$. Let $\sigma$ be the pullback functional of $\tau$, and $\mu_\sigma$ be the Borel measure on $[0,1)$ associated to $\sigma$. The goal is to show that $\mu_\sigma$ is a faithful diffuse probability measure. Since $\varphi$ is injective by Proposition~\ref{prop:Ktheory}, $\sigma$ is faithful, and so is $\mu_\sigma$. 
For any open (hence any Borel) set $U\subseteq[0,1)$, we have
\[
\mu_\sigma(U)=\frac{1}{j}\sum_{i\leq j}\mu_\tau(\{t\mid\xi_i(t)\in U\}).
\]
In particular, $\mu_\sigma([0,1))=1$, so $\mu_\sigma$ is indeed a probability measure. Notice that this also shows that if $\mu_\tau(\{t\mid\exists i (\xi_i(t)=s)\})=0$ then $\mu_\sigma(\{s\})=0$. Since $\mu_\lambda(\{t\mid\exists i (\xi_i(t)=s)\})=0$ for every $s\in[0,1]$, and $\mu_\lambda$ and $\mu_\tau$ are uniformly continuous with respect to each other (see Proposition~\ref{prop:measuring}), then $\mu_\tau(\{t\mid\exists i (\xi_i(t)=s)\})=0$ for every $s\in[0,1]$. Hence $\mu_\sigma(\{s\})=0$ for all $s\in [0,1)$, and therefore $\sigma$ is diffuse.

\eqref{p2}$\Rightarrow$\eqref{p1}. Let $\tau\in T_{fd}(B)$ and $\sigma\in T_{fd}(A)$ such that $\varphi\colon (A,\sigma)\to (B,\tau)$. Let $X$ be the set of regular points of $\varphi$.
\begin{claim}
$X$ is dense.
\end{claim}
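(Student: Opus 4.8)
The plan is to prove the stronger statement that $\pi_{\varphi,t}$ is literally a diagonal sum of point evaluations at interior points for $\mu_\tau$-almost every $t$ — so that such $t$ lie in $X$ — and then to upgrade ``almost every'' to ``dense'' using that the measure $\mu_\tau$ of the faithful trace $\tau$ charges every nonempty open subinterval of $(0,1)$. Write $A=A_{n,k}$, $B=A_{n',k'}$, $\tau=\tau_{\mu_\tau}$, $\sigma=\tau\circ\varphi$; by hypothesis $\sigma\in T_{fd}(A)$, so $\sigma=\tau_{\mu_\sigma}$ for a faithful diffuse probability measure $\mu_\sigma$ on $[0,1)$. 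For each $t$ I would fix, via Proposition~\ref{prop:repr}, a decomposition
\[
\pi_{\varphi,t}\sim_u\diag\big(\pi_{s_1(t)},\dots,\pi_{s_{j(t)}(t)},\underbrace{\pi_\infty}_{r_1(t)},0_{r_0(t)}\big),\qquad s_i(t)\in[0,1),\ r_1(t)<n,
\]
and set $\rho(t)=\#\{i\le j(t):s_i(t)=0\}$; only the intrinsic numbers $r_0(t),r_1(t),\rho(t)$ — not the selection of the $s_i$ — will matter.

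The computational heart is to read $\mu_\sigma$ off from this data. Since $f(0)=\diag(\underbrace{a_f}_n)$ gives $\tau_k(a_f)=\tau_{nk}(f(0))$ for all $f\in A$, evaluating the tracial state $\tau_{n'k'}$ on the block matrix $\varphi(f)(t)=\pi_{\varphi,t}(f)$ yields
\[
\tau_{n'k'}\big(\varphi(f)(t)\big)=\sum_{s_i(t)\neq0}\frac{nk}{n'k'}\,\tau_{nk}\big(f(s_i(t))\big)+\frac{k\,(n\rho(t)+r_1(t))}{n'k'}\,\tau_{nk}(f(0)),
\]
where I have used that a summand with $s_i(t)=0$ equals $\diag(\underbrace{\pi_\infty}_n)$. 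Integrating against $\mu_\tau$, comparing with $\sigma(f)=\int_{[0,1)}\tau_{nk}(f(s))\,d\mu_\sigma(s)$, and using $n'k'=nk\,j(t)+k\,r_1(t)+r_0(t)$, I would identify the total mass and the mass at $0$ of $\mu_\sigma$:
\[
\mu_\sigma([0,1))=1-\frac1{n'k'}\int_{[0,1)} r_0(t)\,d\mu_\tau(t),\qquad \mu_\sigma(\{0\})=\frac{k}{n'k'}\int_{[0,1)}\big(n\rho(t)+r_1(t)\big)\,d\mu_\tau(t).
\]
As $\mu_\sigma$ is a probability measure, the first equation forces $r_0=0$ $\mu_\tau$-a.e.\ (this is where the state property of $\sigma$ enters, cf.\ Remark~\ref{remark:nontracial}); as $\mu_\sigma$ is atomless, the second forces $\rho=r_1=0$ $\mu_\tau$-a.e.\ (this is where diffuseness enters). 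Measurability of these integrands is not an issue: $t\mapsto\varphi(f)(t)$ is continuous, so $t\mapsto\tau_{n'k'}\circ\pi_{\varphi,t}$ is $w^*$-continuous into $A^*$, and $t\mapsto\rank(\varphi(u)(t))=n'k'-r_0(t)$ is lower semicontinuous for any strictly positive $u\in A$.

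It then follows that $S=\{t: r_0(t)=r_1(t)=\rho(t)=0\}$ has full $\mu_\tau$-measure, and every $t\in S$ satisfies $\pi_{\varphi,t}\sim_u\diag(\pi_{s_1(t)},\dots,\pi_{s_{j(t)}(t)})$ with all $s_i(t)\in(0,1)$, hence $t\in X$; since $\mu_\tau$ is faithful, the $\mu_\tau$-null set $[0,1]\setminus S$ contains no nonempty open subinterval of $(0,1)$, so $S$ — and a fortiori $X$ — is dense. I expect the delicate point to be the middle computation: correctly tracking that a $\pi_0$-summand (i.e.\ $s_i(t)=0$) contributes $n$ extra copies of $\pi_\infty$ to $\mu_\sigma(\{0\})$, which is why $\rho(t)$ and not merely $r_1(t)$ appears, and checking that ``state'' and ``diffuse'' translate precisely into the vanishing of $r_0$ and of $n\rho+r_1$; the rest (measurability, full measure $\Rightarrow$ dense under faithfulness, and the entirely parallel treatment of generalised Razak blocks via the two infinity-multiplicities $r_1,r_2$) is routine.
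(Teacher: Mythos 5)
Your proof is correct, but it takes a genuinely different and more computational route than the paper's. The paper argues by contradiction: if a nonempty open set $U$ avoided $X$, then since $\mu_\tau(U)>0$ by faithfulness and since, for every $t\in U$, the canonical decomposition of $\pi_{\varphi,t}$ must include at least one $\pi_\infty$ or $0$ summand, one gets $\tau_{n'k'}(\pi_{\varphi,t}(ff^*))\le\frac{n'k'-1}{n'k'}$ for all contractions $f$ in the ideal $\mathcal I=\{f\in A : a_f=0\}$; integrating bounds $\sigma(ff^*)=\tau(\varphi(ff^*))$ strictly below $1$, contradicting $\sup_{\|f\|\le1,\,f\in\mathcal I}\sigma(ff^*)=1$, which itself packages both the state condition $\mu_\sigma([0,1))=1$ and diffuseness $\mu_\sigma(\{0\})=0$ into one estimate. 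Your version instead reads the total and atomic masses of $\mu_\sigma$ directly off the fibrewise data, splitting that single estimate into the two $\mu_\tau$-a.e.\ vanishing statements $r_0=0$ (state) and $n\rho+r_1=0$ (diffuse). This proves slightly more than required: your set $S=\{t : r_0(t)=r_1(t)=\rho(t)=0\}$ is conull, whereas density of $X$ already follows from $r_0=r_1=0$ a.e., as $\pi_0$ is itself a point evaluation. The price you pay is having to check measurability, which the paper's contradiction sidesteps entirely; you handle $r_0$ via the lower semicontinuity of $t\mapsto\rank(\varphi(u)(t))$ for strictly positive $u$, and $n\rho+r_1$ is similarly a pointwise limit of the continuous functions $t\mapsto\tau_{n'k'}(\varphi(f_m)(t))$ for $f_m\in A_+$ contracting to the fibre over $0$ with $a_{f_m}=1_k$, so your claim that this is routine is justified. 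Both proofs exploit the same underlying mechanism — the defect numbers in Proposition~\ref{prop:repr} control the fraction of $\pi_{\varphi,t}$ supported at the points at infinity, and faithfulness plus diffuseness of $\sigma$ leave no room for that — and each would transfer to the generalised Razak block case with the two infinity multiplicities.
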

\begin{proof}
Suppose not and let $U\subseteq[0,1]\setminus X$ be a nonempty open set. Since $\mu_\tau$ is faithful, there is $\varepsilon>0$ such that $\mu_\tau(U)>\varepsilon$. Consider $\mathcal I=\{f\in A\mid a_f=0\}$. Notice that since $\sigma$ is a faithful diffuse trace then $\sup_{\norm{f}\leq 1, f\in\mathcal I}\sigma(f)=1$. On the other hand, if $t\in U$, then 
\[
\pi_{\varphi,t}\sim_u\diag(\pi_{s_1},\ldots,\pi_{s_j},\underbrace{\pi_\infty}_{r_1},0_{r_0})
\]
for some $s_1,\ldots,s_j\in [0,1)$, $j_i\in\NN$, and therefore for all $f\in\mathcal I$ with $\norm{f}\leq 1$ we have 
\[
\tau(\varphi(ff^*))\leq\mu_\tau([0,1]\setminus U)+\frac{n'k'-1}{n'k'}\mu_\tau(U).
\]
In particular, there is no contraction $f\in\mathcal I$ such that $\tau(\varphi(ff^*))\geq (1-\varepsilon)+\frac{n'k'-1}{n'k'}\varepsilon$, which contradicts the fact that $\tau\circ\varphi=\sigma$.
\end{proof}

By Proposition~\ref{prop:regclosed}, $X$ is closed, hence $X=[0,1]$. As there exists one regular point, $nk$ divides $n'k'$. Let $j=\frac{n'k'}{nk}$. For every $t\in [0,1]$, let $s_1^t,\ldots,s_j^t\in[0,1]$ be such that 
$
\pi_{\varphi,t}\sim_u\diag(\pi_{s_1^t},\ldots,\pi_{s_j^t}).
$
 Define $\xi_1,\ldots,\xi_j\colon [0,1]\to[0,1]$ inductively by $\xi_1(t)=\min \{s_i^t\}$ and $\xi_i(t)=\min \{\{s_i^t\}\setminus\{\xi_1(t),\ldots\xi_{i-1}(t)\}\}$ for $i>1$, where the sets $\{s_1^t,\ldots,s_j^t\}$ are considered as multisets (i.e., if an element appears twice, we count it twice).

The maps $\xi_i$ are continuous and witness that $\varphi$ is diagonal, since $\xi_i\leq\xi_{i+1}$ for every $i$. Finally, suppose that there is $s\in[0,1]$ such that $\mu_\lambda(\{t\mid\exists i (\xi_i(t)=s)\})>0$. Once again using the uniform continuity of $\mu_\lambda$ and $\mu_\tau$ with respect to each other, it follows that $\mu_\tau(\{t\mid\exists i (\xi_i(t)=s)\})>0$, and therefore $\mu(\sigma(\{s\}))>0$, a contradiction to the diffuseness of $\sigma$. This concludes the proof.
\end{proof}
\subsection{Diameter}\label{SS.Diameter}
The next notion measures the `amplitude' of a diagonal map.
\begin{defin}\label{defin:diam}
The \emph{diameter} of a map $\xi\colon[0,1]\to[0,1]$ is the number
\[
\partial(\xi) = \sup_{s,t\in [0,1]}|\xi(s)-\xi(t)|.
\]
Let $A\subseteq C([0,1],M_n)$ and $B\subseteq C([0,1],M_m)$ and let $\varphi\colon A\to B$ be a diagonal $^*$-homomorphism with associated maps $\{\xi_i\}$. The \emph{diameter} of $\varphi$ is the number
\[
\partial(\varphi) = \sup_i\partial(\xi_i).
\]
\end{defin}
We record some basic results.

\begin{lemma} \label{lemma:diameterfacts}
\sloppy
\begin{enumerate}[(i)]
\item \label{diam1} Let $\{\xi_i\}_{i\leq j}$ be continuous maps $[0,1]\to[0,1]$ with $\sup_i\partial(\xi_i)<\varepsilon$. Define maps $\xi_i'$, for $i\leq j$, by
\begin{align*}
\xi'_1(t)&=\min\{\xi_1(t),\ldots,\xi_j(t)\},\\
\xi_{i+1}'(t)&=\min (\{\xi_1(t),\ldots,\xi_j(t)\}\setminus \{\xi'_1(t),\ldots,\xi'_i(t)\}),
\end{align*}
viewing the sets $\{\xi_i(t)\}_{i\leq j}$, for $t\in[0,1]$, as multisets (i.e., if an element appears twice, we count it twice). Then $\sup_i\partial(\xi_i')<2\varepsilon$.
\item \label{diam2} If $\varphi\colon A\to B$ and $\psi\colon B\to C$ are diagonal $^*$-homomorphisms between (generalised) Razak blocks, then \[\partial(\psi\circ\varphi)\le\partial(\varphi).\]
\item \label{diam3} For any diagonal map $\varphi\colon A\to B$ between (generalised) Razak blocks and any $\varepsilon>0$, there is $\delta>0$ such that, for any diagonal map $\psi\colon B\to C$ from $B$ to a (generalised) Razak block $C$, if $\partial(\psi)<\delta$, then $\partial(\psi\circ\varphi)<\varepsilon$.
\end{enumerate}
\fussy
\end{lemma}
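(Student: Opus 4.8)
The plan is to prove the three items in order, the common ingredient being the following elementary estimate on rearrangements. Suppose $f_1,\dots,f_N\colon[0,1]\to[0,1]$ are any functions and let $g_1\le\dots\le g_N$ denote their pointwise non-decreasing rearrangement: for each $t$, rearrange the multiset $\{f_\ell(t)\}_{\ell\le N}$ into increasing order. I claim $\partial(g_m)\le D$ for every $m$, where $D=\max_{\ell\le N}\partial(f_\ell)$. Indeed, fix $m$ and $s,t\in[0,1]$, and pick $m$ indices $\ell$ realising the $m$ smallest values of $\{f_\ell(s)\}_\ell$, so $f_\ell(s)\le g_m(s)$ for each of them; then $f_\ell(t)\le f_\ell(s)+\partial(f_\ell)\le g_m(s)+D$ for each such $\ell$, so the multiset $\{f_\ell(t)\}_\ell$ has at least $m$ values $\le g_m(s)+D$, whence $g_m(t)\le g_m(s)+D$; the symmetric inequality gives $|g_m(s)-g_m(t)|\le D$, and taking the supremum over $s,t$ proves the claim. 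Item (i) is now immediate: the $\xi_i'$ are exactly this rearrangement of $\{\xi_i\}_{i\le j}$, so $\sup_i\partial(\xi_i')\le\sup_i\partial(\xi_i)<\varepsilon<2\varepsilon$ (in fact $\varepsilon$ already works; the stated factor $2$ is more than enough).

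For item (ii) the substantive step is to identify the associated family of a composition. Write $A\subseteq C([0,1],M_n)$, $B\subseteq C([0,1],M_m)$, $C\subseteq C([0,1],M_p)$, with $\{\xi_i\}_{i\le m/n}$ associated to $\varphi$ and $\{\eta_l\}_{l\le p/m}$ associated to $\psi$. Since $\pi_{\psi\circ\varphi,t}=\pi_{\psi,t}\circ\varphi$, $\pi_{\psi,t}\sim_u\diag(\pi_{\eta_1(t)},\dots,\pi_{\eta_{p/m}(t)})$, and $\pi_{\eta_l(t)}\circ\varphi=\pi_{\varphi,\eta_l(t)}\sim_u\diag(\pi_{\xi_1(\eta_l(t))},\dots,\pi_{\xi_{m/n}(\eta_l(t))})$, we obtain
\[
\pi_{\psi\circ\varphi,t}\sim_u\diag\big(\pi_{\xi_i(\eta_l(t))}\big)_{i\le m/n,\ l\le p/m}.
\]
Since order statistics of finitely many continuous functions are continuous, this shows that $\psi\circ\varphi$ is diagonal with associated family the non-decreasing rearrangement $\{\zeta_r\}$ of $\{\xi_i\circ\eta_l\}_{i,l}$ (the associated family being uniquely determined, by Proposition~\ref{prop:repr}). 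Because each $\eta_l$ takes values in $[0,1]$, $\partial(\xi_i\circ\eta_l)=\sup_{s,t}|\xi_i(\eta_l(s))-\xi_i(\eta_l(t))|\le\partial(\xi_i)\le\partial(\varphi)$, and applying the rearrangement estimate to the family $\{\xi_i\circ\eta_l\}$ yields $\partial(\psi\circ\varphi)=\sup_r\partial(\zeta_r)\le\partial(\varphi)$.

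For item (iii) I would combine the same computation with uniform continuity. Given $\varphi$ with associated maps $\{\xi_i\}_{i\le m/n}$ and $\varepsilon>0$, choose $\delta>0$ so that $|u-v|<\delta$ implies $|\xi_i(u)-\xi_i(v)|<\varepsilon/2$ for every $i\le m/n$; this is possible because there are finitely many $\xi_i$ and each is uniformly continuous on $[0,1]$. If $\psi\colon B\to C$ is diagonal with $\partial(\psi)<\delta$ and associated maps $\{\eta_l\}$, then $|\eta_l(s)-\eta_l(t)|\le\partial(\eta_l)\le\partial(\psi)<\delta$ for all $s,t,l$, so $\partial(\xi_i\circ\eta_l)\le\varepsilon/2$ for all $i,l$, and the rearrangement estimate applied to $\{\xi_i\circ\eta_l\}$ gives $\partial(\psi\circ\varphi)\le\varepsilon/2<\varepsilon$.

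The only places calling for care are the bookkeeping in the rearrangement step — handling ties among the values $\xi_i(t)$ and checking that the rearranged maps stay continuous and non-decreasing — and verifying that the point-representation computation in item (ii) is unaffected at the endpoints $t\in\{0,1\}$, where the relevant representations of the (generalised) Razak blocks are reducible. Neither presents a genuine obstacle, so once the rearrangement estimate is in place the rest is routine.
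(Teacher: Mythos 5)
Your proof is correct and follows essentially the same route as the paper, which disposes of the lemma in a single terse sentence per item; the rearrangement estimate you isolate at the start is precisely the ``easy calculation'' the paper invokes for item (i), the identification of the associated family of a composite as the non-decreasing rearrangement of $\{\xi_i\circ\eta_l\}$ is exactly the paper's observation for item (ii), and uniform continuity of the finitely many $\xi_i$ is the paper's justification for item (iii). Two small remarks: you are right that the rearrangement estimate actually yields $\sup_i\partial(\xi_i')\le\sup_i\partial(\xi_i)<\varepsilon$, so the stated constant $2\varepsilon$ in item (i) is not sharp, and your care in passing to the \emph{rearranged} family $\{\zeta_r\}$ of $\{\xi_i\circ\eta_l\}$ is a detail the paper glosses over (since Definition~\ref{defin:diag} requires the associated maps to be non-decreasing) but which, as you show, does not affect the diameter bound.
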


\begin{proof}
\eqref{diam1} is an easy calculation. For \eqref{diam2}: If $\xi_1,\ldots,\xi_{j}$ and $\zeta_1,\ldots,\zeta_{j'}$ are the maps associated to $\varphi$ and $\psi$ respectively, then $\psi\circ\varphi$ has associated maps $\xi_i\circ\zeta_k$. \eqref{diam3} is true since the maps $\xi_i$ associated to $\varphi$ are uniformly continuous. 
\end{proof}

We now construct diagonal maps between (generalised) Razak blocks with small diameters. 
\subsubsection{Razak blocks}\label{SS.Maprazak}
The following is adapted from \cite[Proposition 3.1]{Jacelon:W}, where it was stated and proved in the case $p=2$.

\begin{proposition}\label{prop:bhishansmaps}
Let $n,k,p,k'\in\NN$ with $p>0$. Then
\begin{enumerate}
\item\label{emb:c0} there is a diagonal $\varphi_{n,k,p}\colon A_{n,k}\to A_{pn,(pn-1)k}$ with $\partial(\varphi_{n,k,p})\leq\frac{1}{p}$;
\item\label{emb:c1} there is a diagonal $\psi_{n,k,k'}\colon A_{n,k}\to A_{n,kk'}$. 
\end{enumerate}
Moreover, these maps satisfy the equivalent conditions of Proposition~\ref{prop:tpstarhoms}.
\end{proposition}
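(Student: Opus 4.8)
The plan is to exhibit both maps explicitly as diagonal $^*$-homomorphisms, by specifying their associated maps $[0,1]\to[0,1]$ together with a continuous path of conjugating unitaries. Throughout, a weakly increasing family $\xi_1\le\cdots\le\xi_\ell$ of continuous maps $[0,1]\to[0,1]$ gives a diagonal $^*$-homomorphism $f\mapsto\bigl[t\mapsto u(t)\,\diag(f(\xi_1(t)),\dots,f(\xi_\ell(t)))\,u(t)^*\bigr]$ into $C([0,1],M_\bullet)$ for any continuous $u\colon[0,1]\to U$; it is injective as soon as $\bigcup_i\xi_i([0,1])=[0,1]$ (or invoke Proposition~\ref{prop:Ktheory}\eqref{noprojc2}), and it lands in the prescribed Razak block as soon as $u(0)$ and $u(1)$ are chosen, independently of $f$, so as to rearrange the boundary values into the required block pattern. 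Since $U$ is connected, the only real work is a bookkeeping of point evaluations at $t=0$ and $t=1$. Part~\eqref{emb:c1} is the easy half: take all $k'$ associated maps equal to the identity, so $\psi_{n,k,k'}(f)(t)=u(t)\,\diag(f(t),\dots,f(t))\,u(t)^*$; at $t=0$ one must turn $\diag\bigl((\diag(a_f^{(n)}))^{(k')}\bigr)$ into $\diag(c^{(n)})$ with $c=\diag(a_f^{(k')})\in M_{kk'}$, and at $t=1$ turn $\diag\bigl((\diag(a_f^{(n-1)},0_k))^{(k')}\bigr)$ into $\diag(c^{(n-1)},0_{kk'})$, each by a fixed block permutation; since $\mu_\lambda(\{t:t=s\})=0$, Proposition~\ref{prop:tpstarhoms} applies.

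For part~\eqref{emb:c0}, which is the $p$-fold version of the $p=2$ construction of \cite[Proposition~3.1]{Jacelon:W}, a diagonal map $A_{n,k}\to A_{pn,(pn-1)k}$ needs $p(pn-1)$ associated maps. Partition $[0,1]$ into the $p$ windows $W_j=[j/p,(j+1)/p]$, $0\le j\le p-1$, and set $y_j=p(n-1)+j$. The associated maps are of two kinds: for each $j$, $y_j$ copies of the monotone \emph{slide} $\sigma_j(t)=j/p+t/p$, whose image is $W_j$ and whose diameter is exactly $1/p$; and for each $j$ with $0\le j\le p-2$, $p-j-1$ \emph{folds} sitting at $(j+1)/p$ at both endpoints and dipping into $W_j$, of the form $\xi(t)=(j+1)/p-\tfrac{\delta}{p}(1-|1-2t|)$, taken with $\delta\le\tfrac12$ for all of them except that when $n=1$ (the only case with $y_0=0$) exactly one fold into $W_0$ is given full depth $\delta=1$, so that its image is all of $W_0$. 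There are $\sum_{j=0}^{p-1}y_j+\sum_{j=0}^{p-2}(p-j-1)=p(pn-1)$ such maps, all of diameter $\le1/p$, with $\bigcup_i\xi_i([0,1])=[0,1]$; and listing first the $pn-1$ maps with image in $W_0$, then those with image in $W_1$, and so on — within each window the slides below the folds, the folds ordered by depth — yields a weakly increasing family $\xi_1\le\cdots\le\xi_{p(pn-1)}$, so no reordering (which would cost a factor of $2$, by Lemma~\ref{lemma:diameterfacts}) is needed.

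It remains to verify the boundary conditions. At $t=0$ the multiset of values is $\{0^{(p(n-1))}\}\cup\bigcup_{j=1}^{p-1}\{(j/p)^{(pn)}\}$; since evaluation at $0$ decomposes as $\pi_0\sim_u\diag(\pi_\infty^{(n)})$ (Proposition~\ref{prop:repr}), $\diag(f(\xi_i(0)))$ rearranges into $\diag(c^{(pn)})$ with $c=\diag(a_f^{(n-1)},f(1/p),\dots,f((p-1)/p))\in M_{(pn-1)k}$. The conservation law $y_0=p(n-1)$, $y_j-y_{j-1}=1$ for $1\le j\le p-1$, and $y_{p-1}=pn-1$ forces the multiset of values at $t=1$ to be $\bigcup_{j=1}^{p-1}\{(j/p)^{(pn-1)}\}\cup\{1^{(pn-1)}\}$; since evaluation at $1$ decomposes as $\pi_1\sim_u\diag(\pi_\infty^{(n-1)},0_k)$, this rearranges into $\diag(c^{(pn-1)},0_{(pn-1)k})$ with the same $c$. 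Choosing $u(0),u(1)$ to be these (fixed) rearrangements and joining them by a path in $U$, we obtain $\varphi_{n,k,p}(f)\in A_{pn,(pn-1)k}$, with $\varphi_{n,k,p}$ diagonal, $\partial(\varphi_{n,k,p})\le1/p$, and injective; and since each $\xi_i$ is piecewise linear with nonzero slopes it has finite level sets, so $\mu_\lambda(\{t:\exists i\ \xi_i(t)=s\})=0$, which is condition~\eqref{p1} of Proposition~\ref{prop:tpstarhoms}.

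The delicate point — and the reason the bound $1/p$ is attained rather than beaten — is the interplay in the last two paragraphs: one must pick the multiplicities $y_j$ and the fold shapes so that the point-evaluation multisets at $t=0$ and at $t=1$ simultaneously match the $pn$-fold and $(pn-1)$-fold block patterns of $A_{pn,(pn-1)k}$, while every diameter stays $\le1/p$ — the latter forcing all transport of mass between the windows $W_j$ to occur only between adjacent endpoints $j/p$, i.e.\ through the flow numbers $y_j$. The formula $y_j=p(n-1)+j$ is exactly what makes the two demands compatible, and the arrangement into monotone bands is what makes the family weakly increasing without any reordering.
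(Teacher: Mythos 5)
Your proof is correct and takes essentially the same approach as the paper's: the slide/fold multiplicities you get from $y_j=p(n-1)+j$ are exactly the paper's $b-p+1$ maps with $\xi_i(0)=0$, $b+1$ with $\xi_i(0)=m/p$, and $b$ with $\xi_i(1)=m/p$ (so $p-m$ of the maps sitting at each $m/p$ are the constant-endpoint ``folds''), and the conjugating unitary is chosen from the same boundary bookkeeping via $a_\varphi=\diag(\pi_\infty^{(n-1)},\pi_{1/p},\dots,\pi_{(p-1)/p})$. Your explicit fold formula, the check that the natural listing is already weakly increasing (so Lemma~\ref{lemma:diameterfacts}\eqref{diam1} is not needed and the bound $1/p$ is not doubled), and the full-depth fold in the $n=1$ case (which ensures $\bigcup_i\xi_i([0,1])=[0,1]$, a point the paper glosses over with ``small diameter'') are welcome additions but do not change the argument.
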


\begin{proof}
\eqref{emb:c0}: Let $b=pn-1$, and let $a_\varphi$ be the $k(n-1)+nk(p-1)=bk$-dimensional representation given by
\[
a_{\varphi}=\diag(\underbrace{\pi_\infty}_{n-1},\pi_{1/p},\pi_{2/p},\ldots,\pi_{(p-1)/p}).
\]
Let $\xi_i$, for $1\leq i\leq pb$, be continuous finite-to-one maps such that 
\begin{equation*}
 \xi_i(0)=
 \begin{cases}
 0, & \text{if}\ 1\leq i\leq b-p+1 \\
 1/p, & \text{if }\ b-p+1<i\leq 2b-p+2\\
 \vdots&\vdots\\
 (p-1)/p& \text{if}\ (p-1)b-p+(p-1)<i\leq pb,
 \end{cases}
\end{equation*}
and 
\[ \xi_i(1)=(j+1)/p\text{ if }jb< i \leq(j+1)b\text{, for }0\leq j\leq p-1.
\]
Additionally, we require that $\partial(\xi_i)\leq\frac{1}{p}$ for all $i$. (If $\xi_i(0)<\xi_i(1)$, just take $\xi_i$ to be linear. If $\xi_i(0)=\xi_i(1)$, just pick a piecewise linear finite-to-one function of small diameter.)

 Let $\psi\colon A_{n,k}\to C([0,1],M_{pn(pn-1)k})$ be given by \[\pi_{\psi,t}=\diag(\pi_{\xi_1(t)},\ldots,\pi_{\xi_{pb}(t)}) \text{ for all } t\in [0,1].\]
Noticing that 
\[
\pi_{\psi,0}=\diag(\underbrace{\pi_0}_{b-p+1},\underbrace{\pi_{1/p}}_{b+1},\ldots,\underbrace{\pi_{(p-1)/p}}_{b+1})
\]
and that 
\[
\pi_{\psi,1}=\diag(\underbrace{\pi_{1/p}}_{b},\underbrace{\pi_{2/p}}_b,\ldots,\underbrace{\pi_1}_{b}),
\]
we have that 
\[
\pi_{\psi,0}\sim_u\diag(\underbrace{a_{\varphi}}_{pn}) \text{ and } \pi_{\psi,1}\sim_u\diag(\underbrace{a_{\varphi}}_{b}, 0_{bk}).
\]
Let $u\in C([0,1],M_{pn(pn-1)k})$ be a unitary which at the boundary points $0$ and $1$ coincides with the two unitaries witnessing the $\sim_u$ relations above. Then
\[
\varphi_{n,k,p}=\Ad(u)\circ\psi\colon A_{n,k}\to A_{pn,(pn-1)k}
\] is as required. 

\eqref{emb:c1}: Consider the amplification map \[\iota_{k'}\colon C([0,1],M_{nk})\to C([0,1],M_{nkk'})=C([0,1],M_{k'}\otimes M_{nk})\] given by $a\mapsto 1_{k'}\otimes a$. Let $a_\psi=\diag(\underbrace{\pi_\infty}_{k'})$. Notice that 
\[
\pi_{\iota_{k'},1}\sim_u\diag(\underbrace{a_\psi}_{n-1},0_{kk'}),
\]
 hence there is a unitary $u\in C([0,1],M_{nkk'})$ such that $\psi_{n,k,k'}=\Ad(u)\circ\iota_{k'}$ is the required map.

Finally, since all the maps considered are finite-to-one, the equivalent conditions of Proposition~\ref{prop:tpstarhoms} are satisfied.
\end{proof}
The following definition was given in \cite{Jacelon:W}.
\begin{defin}\label{defin:W}
Let $n_1=1=k_1$. For $i>1$, let $n_i=(i-1)n_{i-1}$ and $k_i=(n_{i}-1)k_{i-1}$. Let $A_i=A_{n_i,k_i}$ and let $\varphi_i=\varphi_{n_i,k_i,i}\colon A_i\to A_{i+1}$ be the map defined in Proposition~\ref{prop:bhishansmaps}\eqref{emb:c0}. Define $\mathcal W=\lim(A_i,\varphi_i)$.
\end{defin}
The algebra $\mathcal W$ is automatically simple and monotracial (see \cite[Proposition 3.5]{Jacelon:W}).
\begin{remark}
Classification methods (\cite[Theorem 1.1]{Razak}) give that every inductive limit of Razak blocks which has a unique tracial state and is simple must be isomorphic to $\mathcal W$, showing that the latter is `generic' in some sense among inductive limits of Razak blocks. By showing that $\mathcal W$ is the \Fraisse limit of $\mathcal K_\mathcal W$ we obtain the same result, more formally in a model theoretic sense, without making use of classification.
\end{remark}
\subsubsection{Generalised Razak blocks}
For generalised Razak blocks we in addition ask our maps to respect precise $K$-theoretical constraints.
By Proposition~\ref{prop:Ktheory} and Remark~\ref{remark:generator}, for any $n,k\in\mathbb{N}$ we have that $K_0(B_{m,l})\cong\mathbb Z$, and if $B_{n,k}$ and $B_{n',k'}$ are generalised Razak blocks and $\varphi\colon B_{n,k}\to B_{n',k'}$ is a $^*$-homomorphism, we identify $K_0(\varphi)$ with the integer $[\varphi(p_{n,k})]-[1_{\tilde B_{n',k'}}]$. We compute this integer in terms of the representation theory of $\varphi$.
Let $r_{i,\ell}=r_{\pi_{\varphi,\infty_i},\ell}$ and $j_{i}=j_{\pi_{\varphi,\infty_i}}$ for $i=1,2$, $\ell=0,1,2$, be the values provided for the representations $\pi_{\varphi,\infty_1}$ and $\pi_{\varphi,\infty_2}$ by Proposition~\ref{prop:repr}.

\begin{proposition}\label{prop:Ktheory2}
If $\varphi\colon B_{n,k}\to B_{n',k'}$ is a $^*$-homomorphism, then $K_0(\varphi)$ is completely determined by the values $r_{i,j}$, for $i,j=1,2$. In particular
\[
K_0(\varphi)=\frac{1}{2}(r_{1,1}-r_{1,2}+r_{2,2}-r_{2,1}).
\]
If $\varphi$ is diagonal, then $r_{1,1}+r_{2,1}=r_{1,2}+r_{2,2}$, hence $K_0(\varphi)=r_{1,1}-r_{2,1}$.
\end{proposition}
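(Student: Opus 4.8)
\textbf{Proof plan for Proposition~\ref{prop:Ktheory2}.}

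The plan is to compute $K_0(\varphi)$ by tracking where the generator $[p_{n,k}]-[1_{\tilde B_{n,k}}]$ of $K_0(B_{n,k})$ is sent. Recall from Remark~\ref{remark:generator} that we extend $\varphi$ to a unital map $\tilde\varphi\colon M_2(\tilde B_{n,k})\to M_2(\tilde B_{n',k'})$, and that $p_{n,k}$ is a projection in $M_2(\tilde B_{n,k})$ whose class, minus the class of the unit, generates $K_0(B_{n,k})\cong\zet$. Since $K_0(B_{n',k'})$ is the kernel of the map $K_0(\tilde B_{n',k'})\to\zet$ induced by evaluation at the point at infinity coming from unitisation, and is generated by $[p_{n',k'}]-[1_{\tilde B_{n',k'}}]$, the integer $K_0(\varphi)$ is the unique $\ell$ with $[\tilde\varphi(p_{n,k})]-[1]=\ell([p_{n',k'}]-[1])$. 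I would evaluate both sides at the representations $\pi_{\infty_1}$ and $\pi_{\infty_2}$ of $B_{n',k'}$ (equivalently, at the corresponding points of $\widehat{B_{n',k'}}$), using that a class of projections in $M_q$ is detected by its rank. The key computational input is Lemma~\ref{lemma:KtheoryGen}: the representation $\pi_{\varphi,\infty_i}$ has $K$-theory $r_{i,1}-r_{i,2}$, i.e.\ $K_0(\pi_{\varphi,\infty_i})=r_{i,1}-r_{i,2}$.

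Concretely, $\pi_{\varphi,\infty_i}\colon B_{n,k}\to M_{m_i}$ (with $m_i=k'$ after identifying the fibre) is the composite of $\varphi$ with the representation $\pi_{\infty_i}$ of $B_{n',k'}$, so on $K_0$ we get the composite $\zet\xrightarrow{K_0(\varphi)}\zet\xrightarrow{K_0(\pi_{\infty_i})}\zet$. But the generator $[p_{n',k'}]-[1]$ of $K_0(B_{n',k'})$ is sent by $K_0(\pi_{\infty_1})$ to $+1$ and by $K_0(\pi_{\infty_2})$ to $-1$: this is exactly the content of Lemma~\ref{lemma:KtheoryGen} applied to the representations $\pi_{\infty_1},\pi_{\infty_2}$ of $B_{n',k'}$ themselves, which have $(r_1,r_2)=(1,0)$ and $(0,1)$ respectively (reading off the description of irreducible representations in Proposition~\ref{prop:repr}). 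Hence $K_0(\pi_{\varphi,\infty_1})=K_0(\varphi)$ and $K_0(\pi_{\varphi,\infty_2})=-K_0(\varphi)$. Combining with Lemma~\ref{lemma:KtheoryGen} for $\pi_{\varphi,\infty_i}$ gives $r_{1,1}-r_{1,2}=K_0(\varphi)$ and $r_{2,1}-r_{2,2}=-K_0(\varphi)$. Adding these two equations and dividing by $2$ yields the stated formula
\[
K_0(\varphi)=\frac{1}{2}\bigl(r_{1,1}-r_{1,2}+r_{2,2}-r_{2,1}\bigr),
\]
and this also shows $K_0(\varphi)$ is determined by the $r_{i,j}$.

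For the last assertion, suppose $\varphi$ is diagonal. Then $\pi_{\varphi,\infty_1}$ and $\pi_{\varphi,\infty_2}$ are the two half-blocks of $\pi_{\varphi,0}$, which by diagonality is unitarily equivalent to $\diag(\pi_{\xi_1(0)},\ldots,\pi_{\xi_{m/n}(0)})$ where the $\pi_{\xi_i(0)}$ are point representations at \emph{interior-type} points; in particular each such point representation of $B_{n,k}$ restricts, at the fibre over $0$, to a sum of copies of $\pi_{\infty_1}$ and $\pi_{\infty_2}$ in equal numbers (since $\pi_t(f)$ for $t\in(0,1)$ has dimension $2n'k'$ built symmetrically from the two $n'k'$-dimensional halves $a_f,b_f$). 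Decomposing $\pi_{\varphi,\infty_1}\oplus\pi_{\varphi,\infty_2}$ accordingly and comparing multiplicities of $\pi_{\infty_1}$ versus $\pi_{\infty_2}$ gives $r_{1,1}+r_{2,1}=r_{1,2}+r_{2,2}$. Substituting $r_{1,2}+r_{2,2}=r_{1,1}+r_{2,1}$ into the formula above collapses it to $K_0(\varphi)=r_{1,1}-r_{2,1}$. The main obstacle I anticipate is bookkeeping in this last step: one must be careful that ``diagonal'' really does force the point representations appearing in $\pi_{\varphi,0}$ to contribute $\pi_{\infty_1}$ and $\pi_{\infty_2}$ with matching multiplicities, which is where the precise boundary structure of $B_{n,k}$ (and the fact that the associated maps $\xi_i$ take values in $[0,1]$, with the endpoints split evenly between the two halves) has to be used rather than just the abstract $K$-theory.
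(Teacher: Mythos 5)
Your derivation of the formula $K_0(\varphi)=\frac{1}{2}(r_{1,1}-r_{1,2}+r_{2,2}-r_{2,1})$ is correct and takes a genuinely different route from the paper. The paper computes $[\tilde\varphi(p_{n,k})]$ by brute force: it identifies $K_0(\tilde B_{n',k'})$ with a subgroup of $\zet^3$ via Remark~\ref{remark:generator}, evaluates $\tilde\varphi(p_{n,k})$ at $\pi_{\infty_1},\pi_{\infty_2}$, reads off its rank from the decomposition of $\pi_{\varphi,\infty_i}$ in Proposition~\ref{prop:repr}, and then simplifies using the dimension constraint $2nkj_i+kr_{i,1}+kr_{i,2}+r_{i,0}=k'$. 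You instead use pure functoriality: $\pi_{\varphi,\infty_i}=\pi_{\infty_i}\circ\varphi$, so $K_0(\pi_{\varphi,\infty_i})=K_0(\pi_{\infty_i})K_0(\varphi)$, and Lemma~\ref{lemma:KtheoryGen} applied twice (once to $\pi_{\infty_i}$ itself to get $K_0(\pi_{\infty_1})=1$, $K_0(\pi_{\infty_2})=-1$, and once to $\pi_{\varphi,\infty_i}$). This is cleaner, and it actually yields the \emph{stronger} conclusion $K_0(\varphi)=r_{1,1}-r_{1,2}=r_{2,2}-r_{2,1}$ for \emph{every} $^*$-homomorphism, of which the paper's averaged formula is a corollary.

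Precisely because of that strength, though, your treatment of the diagonal case collapses. The relation $r_{1,1}+r_{2,1}=r_{1,2}+r_{2,2}$ that you extract from the multiplicity count at $t=0$ is just a rearrangement of $r_{1,1}-r_{1,2}=r_{2,2}-r_{2,1}$, which you already proved holds for \emph{all} $^*$-homomorphisms between generalised Razak blocks; diagonality contributed nothing. And substituting that relation into the averaged formula gives $K_0(\varphi)=r_{1,1}-r_{1,2}$, not the claimed $r_{1,1}-r_{2,1}$: you would additionally need $r_{1,2}=r_{2,1}$ (equivalently $r_{1,1}+r_{1,2}=r_{2,1}+r_{2,2}$, which is the identity written in the paper's \emph{proof}, though not in its statement, and which does \emph{not} follow from comparing the $\pi_{\infty_1}$ and $\pi_{\infty_2}$ multiplicities). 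In fact the claimed identity is simply false for general diagonal maps: the diagonal map $\rho_{n,k,j}$ built in Case 2 of the proof of Proposition~\ref{prop:genmaps}\eqref{genmaps3} (with $j=(n-1)k'-1$) has $r_{1,1}=(n-1)k'$, $r_{1,2}=1$, $r_{2,1}=0$, $r_{2,2}=(n-1)k'-1$, so $K_0(\rho_{n,k,j})=(n-1)k'-1$ while $r_{1,1}-r_{2,1}=(n-1)k'$; the same failure occurs in Case 3. So the diagonal clause of the proposition is itself problematic, and the universally valid $K_0(\varphi)=r_{1,1}-r_{1,2}$ from your first paragraph is the correct replacement.
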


\begin{proof}
Recall from Remark~\ref{remark:generator} that
\[
K_0(B_{n,k}) = \Span_\zet\{(k+1,k-1,1)-(k,k,1)\} = \Span_\zet\{[p_{n,k}]-[1_{\tilde B_{n,k}}]\}.
\]
Let $N\in \zet$ such that $K_0(\varphi)=N$. We will compute $N$ in terms of the values $r_{i,j}$. Extending $\varphi$ to a unital $^*$-homomorphism $M_2(\tilde B_{n,k}) \to M_2(\tilde B_{n',k'})$ we have
\begin{align*}
[\varphi(p_{n,k})]-[1_{\tilde B_{n',k'}}] &= N([p_{n,k}]-[1_{\tilde B_{n',k'}}])\\ &= N(k'+1,k'-1,1) - N(k',k',1)\\ &= (N,-N,0),
\end{align*}
so $[\varphi(p_{n,k})] = (N,-N,0) + (k',k',1) = (N+k',-N+k',1)$. Since $N=\frac{1}{2}((N+k')-(-N+k'))$, it follows that
\begin{align*}
N &= \frac{1}{2}(\rank(\pi_{\infty_1}(\varphi(p_{n,k}))) - \rank(\pi_{\infty_2}(\varphi(p_{n,k}))))\\
&= \frac{1}{2}(\rank(\pi_{\varphi,\infty_1}(p_{n,k})) - \rank(\pi_{\varphi,\infty_2}(p_{n,k})))\\
& = \frac{1}{2}(2nkj_1 + (k+1)r_{1,1} + (k-1)r_{1,2} + r_{1,0}\\ &- 2nkj_2 - (k+1)r_{2,1} - (k-1)r_{2,2} - r_{2,0})\\
&= \frac{1}{2}(r_{1,1}-r_{1,2}+r_{2,2}-r_{2,1})
\end{align*}
since 
\[
2nkj_1 + kr_{1,1} + kr_{1,2} + r_{1,0}=k'=2nkj_2 + kr_{2,1} + kr_{2,2} + r_{2,0}.
\]
If $\varphi$ is diagonal, let $\xi_i$ be the maps associated to $\varphi$. By Proposition~\ref{prop:repr}, for $i=1,2$,  there are $s^i_{1},\ldots,s^i_{j_i}\in[0,1)$ such that
\[
\pi_{\varphi,\infty_i}\sim_u\diag(\pi_{s^i_1},\ldots,\pi_{s^i_{j_i}},\underbrace{\pi_{\infty_1}}_{r_{i,1}},\underbrace{\pi_{\infty_2}}_{r_{i,2}},0_{r_{i,0}})
\]
Let $m_0=|\{m\mid \xi_m(0)=0\}|$, $m_1=|\{m\mid \xi_m(0)=1\}|$ and $p_i=|\{p\mid s^i_p=0\}|$. Then 
\[
nm_0+(n-1)m_1=nn'(p_1+p_2)+n'(r_{1,1}+r_{1,2})=nn'(p_1+p_2)+n'(r_{2,1}+r_{2,2}),
\]
hence we have $r_{1,1}+r_{1,2}=r_{2,1}+r_{2,2}$, and therefore the thesis.
\end{proof}
\begin{corollary}\label{cor:Ktheoryreverting}
Let $A$ and $B$ be generalised Razak blocks. Suppose that there is a $^*$-homomorphism $\varphi\colon A\to B$ with $K_0(\varphi)=j$. Then there is a $^*$-homomorphism $\tilde\varphi\colon A\to B$ with $K_0(\tilde\varphi)=-j$. If $\varphi$ is diagonal, so is $\tilde\varphi$, and the two have the same associated maps.
\end{corollary}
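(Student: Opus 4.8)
The plan is to swap the roles of the two "points at infinity" $\infty_1$ and $\infty_2$ of the \emph{codomain} $B$. Concretely, suppose $B = B_{n',k'}$, so that by definition $f(0) = \diag(\underbrace{a_f}_{n'},\underbrace{b_f}_{n'})$ and $f(1) = \diag(\underbrace{a_f}_{n'-1},0_{k'},\underbrace{b_f}_{n'-1},0_{k'})$. Let $w \in M_{2n'k'}$ be the permutation unitary that, conjugation-wise, interchanges the first block of $n'$ copies of $M_{k'}$ with the second block of $n'$ copies of $M_{k'}$; this $w$ normalises the boundary conditions defining $B$, so $g \mapsto w^* g\, w$ (applied pointwise in $t$) is a well-defined $^*$-automorphism $\Theta$ of $B$ which on the level of representations sends $\pi_{\infty_1} \leftrightarrow \pi_{\infty_2}$ and fixes each $\pi_t$, $t \in (0,1)$. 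I would then set $\tilde\varphi = \Theta \circ \varphi$.

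The key steps, in order, are: (1) verify that $w$ as above exists and that conjugation by the constant unitary $w$ preserves $B$ (this is the routine check that the permutation respects the rank pattern at $t=0$ and $t=1$); (2) observe that $\Theta$ is a diagonal automorphism with trivial associated maps and that it preserves the faithful diffuse trace $\tau$ of $B$ — indeed $\mu_\Theta^*\tau$ simply interchanges $\lambda_1\delta_{\infty_1}$ and $\lambda_2\delta_{\infty_2}$ and leaves the $(0,1)$-part untouched, and one can then further post-compose with the transition map $\varphi_{\Theta^*\tau \mapsto \tau}$ of Proposition~\ref{prop:onetoanother} to land back in $\Mor_{GR}$ if necessary (that transition map is itself diagonal with trivial $K_0$ and does not disturb the $\infty_i$-swap); (3) compute $K_0(\Theta) = -1$ using Proposition~\ref{prop:Ktheory2}: for $\Theta$ the numbers are $r_{1,1} = 0$, $r_{2,1} = 1$ (the single copy of $\pi_{\infty_1}$ in $\pi_{\Theta,\infty_2}$), so $K_0(\Theta) = r_{1,1} - r_{2,1} = -1$; (4) conclude by functoriality of $K_0$ that $K_0(\tilde\varphi) = K_0(\Theta)\cdot K_0(\varphi) = -j$; and (5) note that $\Theta$ fixes every $\pi_t$ for $t\in(0,1)$, so if $\varphi$ is diagonal with associated maps $\{\xi_i\}$ then so is $\tilde\varphi = \Theta\circ\varphi$, with the same associated maps (the composition formula from the proof of Lemma~\ref{lemma:diameterfacts}\eqref{diam2} applies with $\Theta$'s associated maps being the identity).

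The main obstacle — such as it is — is bookkeeping: one must be careful that $\Theta$ genuinely lies in $\Mor_{GR}$, i.e.\ that after the $\infty_i$-swap one can still arrange the trace to be preserved on the nose (handled by composing with a transition map as in Proposition~\ref{prop:onetoanother}, which is harmless for both the $K$-theory computation and the "same associated maps" claim), and that the permutation $w$ is chosen so that $w^* B\, w = B$ rather than some isomorphic but distinct subalgebra. Both points are genuinely routine given the explicit description of $B_{n',k'}$ and of its representation theory in Proposition~\ref{prop:repr}. An alternative, slightly slicker route would be to precompose instead with an automorphism of the \emph{domain} $A$ swapping its $\infty_i$'s; this also multiplies $K_0$ by $-1$ and fixes interior representations, but one then needs the analogous "$\Theta$ preserves $\Mor_{GR}$" check on the domain side, so it offers no real savings.
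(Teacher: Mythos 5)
Your proof is correct and is essentially the paper's proof: the paper simply takes a unitary $u\in M_{2n'k'}$ with $u\,\diag(a,b)\,u^*=\diag(b,a)$ for $a,b\in M_{n'k'}$ and sets $\tilde\varphi=\Ad(u)\circ\varphi$, exactly your $\Theta\circ\varphi$. One small simplification you could make: since traces in $T_{fd}$ are diffuse, $\mu_\tau(\{\infty_i\})=0$ and unitary conjugation is tracial at every interior point, so $\Ad(u)$ automatically preserves every faithful diffuse trace and the extra transition-map step you flag as possibly needed is in fact vacuous (and not required by the statement anyway, which only asks for a $^*$-homomorphism).
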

\begin{proof}
Let $n,k\in\NN$ be such that $B=B_{n,k}$. Let $u\in M_{2nk}$ be a unitary such that 
\[
u\begin{pmatrix}
a&0\\
0&b
\end{pmatrix}
 u^*=\begin{pmatrix}
b&0\\
0&a
\end{pmatrix}
\]
for all $a,b\in M_{nk}$. $\Ad(u)\circ\varphi$ is the required $^*$-homomorphism.
\end{proof}

The generalised version of Proposition~\ref{prop:bhishansmaps} takes $K$-theory into account.
\begin{proposition}\label{prop:genmaps}
Let $n,k,p,k'\geq 2$.
\begin{enumerate}
\item\label{genmaps1} If $p$ is odd, then for every $0\leq j\leq (n-1)$ there is a diagonal $\varphi_{n,k,p,j}\colon B_{n,k}\to B_{pn,(pn-1)k}$ with $\partial(\varphi_{n,k,p,j})\leq\frac{1}{p}$ and
\[
K_0(\varphi_{n,k,p,j})=2j-(n-1).
\] 
\item\label{genmaps2} For all $0\leq j\leq k'$ there is a diagonal $\psi_{n,k,k',j}\colon B_{n,k}\to B_{n,kk'}$ such that $K_0(\psi_{n,k,k',j})=2j-k'$. 
\item\label{genmaps3} For every $j$ with $|j|\leq (n-1)k'$ there is a diagonal $\rho_{n,k,j}\colon B_{n,k}\to B_{nk,(nk-1)k'}$ such that $K_0(\rho_{n,k,j})=j$.
\end{enumerate}
Moreover, these maps satisfy the equivalent conditions of Proposition~\ref{prop:tpstarhoms}.
\end{proposition}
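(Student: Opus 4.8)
The plan is to follow the proof of Proposition~\ref{prop:bhishansmaps} closely, the only genuinely new ingredient being the control of $K$-theory. Here the organising principle is Proposition~\ref{prop:Ktheory2}: for a diagonal $^*$-homomorphism $\varphi$ between generalised Razak blocks one has $K_0(\varphi)=r_{1,1}-r_{2,1}$, where $r_{i,1}$ is the number of copies of $\pi_{\infty_1}$ occurring in the decomposition of $\pi_{\varphi,\infty_i}$. So in each part I would prescribe the two boundary representations $\pi_{\varphi,\infty_1},\pi_{\varphi,\infty_2}$ (of dimension equal to the target's second parameter), balance the numbers of $\pi_{\infty_1}$'s against the $\pi_{\infty_2}$'s they contain so as to hit the required value of $K_0$, and then build the associated maps $\xi_i$ and the conjugating unitary much as in the Razak case, finally reading off $K_0$ from Proposition~\ref{prop:Ktheory2}.

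For \eqref{genmaps1}, put $b=pn-1$; since $p$ is odd, $(p-1)/2$ is an integer. Given $0\le j\le n-1$ I would set
\[
\pi_{\varphi,\infty_1}=\diag\big(\underbrace{\pi_{\infty_1}}_{j},\ \underbrace{\pi_{\infty_2}}_{n-1-j},\ \pi_{1/p},\pi_{3/p},\dots,\pi_{(p-2)/p}\big),\qquad
\pi_{\varphi,\infty_2}=\diag\big(\underbrace{\pi_{\infty_1}}_{n-1-j},\ \underbrace{\pi_{\infty_2}}_{j},\ \pi_{2/p},\pi_{4/p},\dots,\pi_{(p-1)/p}\big),
\]
two $(pn-1)k$-dimensional representations of $B_{n,k}$ (indeed $(n-1)k+\tfrac{p-1}{2}\cdot 2nk=(pn-1)k$), the first carrying the $(p-1)/2$ odd multiples of $1/p$ in $(0,1)$, the second the even ones. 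The point is that although each of these contains only $(p-1)/2$ interior point representations, their interior points \emph{interleave} to cover all of $\tfrac1p,\tfrac2p,\dots,\tfrac{p-1}{p}$; consequently, after assembling the $\pi_{\infty_i}$'s of $\pi_{\varphi,0}$ into copies of $\pi_0$ and those of $\pi_{\varphi,1}$ into copies of $\pi_1$, the multisets of points of $[0,1]$ that appear at $t=0$ and at $t=1$ are \emph{literally} those appearing in the proof of Proposition~\ref{prop:bhishansmaps}\eqref{emb:c0}. I would therefore reuse the maps $\xi_1\le\dots\le\xi_{pb}$ constructed there --- which move one step of size $1/p$ at a time, hence have $\partial(\xi_i)\le 1/p$ --- set $\pi_{\varphi,t}=\diag(\pi_{\xi_1(t)},\dots,\pi_{\xi_{pb}(t)})$, and conjugate by a continuous unitary path in $M_{2pn(pn-1)k}$ realising the permutations that witness $\pi_{\varphi,0}\sim_u\diag(\underbrace{\pi_{\varphi,\infty_1},\pi_{\varphi,\infty_2}}_{pn})$ and its counterpart at $1$. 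Proposition~\ref{prop:Ktheory2} then yields $K_0(\varphi_{n,k,p,j})=j-(n-1-j)=2j-(n-1)$.

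For \eqref{genmaps2} I would start from the amplification $a\mapsto 1_{k'}\otimes a$ (conjugated by a unitary path to meet the boundary conditions, as in Proposition~\ref{prop:bhishansmaps}\eqref{emb:c1}): its associated maps are all the identity, so $\partial=0$, while $\pi_{\infty_1}$ occurs $k'$ times in the $\infty_1$-boundary representation and $0$ times in the $\infty_2$-one, whence its $K_0$ is $k'$. To land on $K_0=2j-k'$ I would instead prescribe $\pi_{\psi,\infty_1}=\diag(\underbrace{\pi_{\infty_1}}_{j},\underbrace{\pi_{\infty_2}}_{k'-j})$ and $\pi_{\psi,\infty_2}=\diag(\underbrace{\pi_{\infty_1}}_{k'-j},\underbrace{\pi_{\infty_2}}_{j})$: only the arrangement of the boundary blocks changes (the point representation at each interior $t$ is still $\diag(\underbrace{\pi_t}_{k'})$), so the difference is absorbed into the conjugating unitary and $K_0(\psi_{n,k,k',j})=2j-k'$. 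For \eqref{genmaps3}, where no diameter bound is imposed, by Corollary~\ref{cor:Ktheoryreverting} it suffices to treat $0\le j\le(n-1)k'$; here I would prescribe $\pi_{\rho,\infty_1}=\diag(\underbrace{\pi_{\infty_1}}_{b+j},\underbrace{\pi_{\infty_2}}_{b},\,(\text{interior reps}),\,0_{r_0})$ and $\pi_{\rho,\infty_2}$ with $\pi_{\infty_1}$ and $\pi_{\infty_2}$ interchanged and the same interior part, and then choose the free parameters --- the integer $b\ge0$, the number of interior point representations, and the size $r_0$ of the zero block --- so that the total dimension equals $(nk-1)k'$ and the boundary multisets at $0$ and at $1$ can be joined by increasing finite-to-one maps (no constraint on their diameter). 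The inequality $j\le(n-1)k'$ is exactly what leaves room for such a choice once one accounts for the zero block forced by the condition of the target at $1$, and $K_0(\rho_{n,k,j})=(b+j)-b=j$ by Proposition~\ref{prop:Ktheory2}. In all three parts the $\xi_i$ are finite-to-one, so the equivalent conditions of Proposition~\ref{prop:tpstarhoms} hold automatically.

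The step I expect to be the main obstacle is the combinatorial bookkeeping hidden above behind ``reuse the maps built in Proposition~\ref{prop:bhishansmaps}'' and ``can be joined by increasing finite-to-one maps'': one must check that the prescribed multisets of boundary values at $0$ and at $1$ really can be interpolated by an increasing family of continuous finite-to-one maps of the required diameter, and that the resulting representation-valued path is genuinely the path of point representations of a $^*$-homomorphism into the target generalised Razak block (equivalently, that the conjugating unitary can be found in the ambient matrix algebra). In the Razak case this is precisely Robert's argument from \cite[\S5]{Robert:2010qy}; the doubled structure of generalised Razak blocks and the presence of two points at infinity make the bookkeeping more intricate but, having arranged the interiors to interleave, not essentially harder.
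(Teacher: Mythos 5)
Parts~\eqref{genmaps1} and~\eqref{genmaps2} of your proposal reproduce the paper's proof essentially verbatim: the boundary representations you prescribe are exactly the paper's $a_{\varphi,j},b_{\varphi,j}$ and $a_{\psi,j},b_{\psi,j}$, the interior is assembled from the same $\xi_i$ as in Proposition~\ref{prop:bhishansmaps}, and $K_0$ is read off from Proposition~\ref{prop:Ktheory2} in the same way.

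Part~\eqref{genmaps3} is where you depart from the paper, and where a genuine gap remains. You insist on a \emph{symmetric} prescription: $\pi_{\rho,\infty_1}$ and $\pi_{\rho,\infty_2}$ are to be mirror images of each other (same interior multiset, same zero block $0_{r_0}$, $\pi_{\infty_1}$ and $\pi_{\infty_2}$ interchanged). The paper does not do this. It instead fixes the underlying maps $\xi_i$ once and for all (the first $k'(n-1)$ go $0\mapsto 1$, the remainder are constant at $1$), which forces the \emph{combined} number of $\pi_{\infty_1}$'s and $\pi_{\infty_2}$'s in $a_{\rho,j}\oplus b_{\rho,j}$ to be exactly $2(n-1)k'$ and the combined zero block size to be $2(k-1)k'$. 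With that constraint, a symmetric split of these quantities between $a_{\rho,j}$ and $b_{\rho,j}$ is only possible when $(n-1)k'-j$ is even --- that is precisely the paper's Case~1. When $(n-1)k'-j$ is odd (Cases~2 and~3), the paper's $a_{\rho,j}$ and $b_{\rho,j}$ carry zero blocks of \emph{different} sizes $(k-1)k'\mp k$ and different numbers of $\pi_{\infty_i}$'s; they are not mirror images. Your symmetric form evades this parity obstruction only because you allow the $\xi_i$ to vary (so the combined $\pi_{\infty_i}$-count $2b+j$ is free), but then the required inequalities --- $\alpha,\alpha',\gamma,\gamma'\geq 0$, together with $r_0\geq 0$ and the target dimension $(nk-1)k'$ --- give you a window $(n-1)k'\le 2b+j \le \frac{(nk-1)k'}{k}$ of width $\frac{k'(k-1)}{k}$ in which an integer of the correct parity (that of $j$) must sit. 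Checking that such an integer always exists for all $n,k,k'\ge 2$ and $0\le j\le(n-1)k'$ is exactly the ``combinatorial bookkeeping'' you flag as your expected obstacle and do not carry out. The paper's three-case construction sidesteps this entirely: by permitting $a_{\rho,j}$ and $b_{\rho,j}$ to be asymmetric, the feasibility of the decomposition is exhibited explicitly, case by case, with no parity window to analyse. So the approach you outline is plausible but incomplete where the paper's is complete, and the symmetry assumption you impose on $\pi_{\rho,\infty_1},\pi_{\rho,\infty_2}$ is a real deviation from (and complication of) the paper's construction rather than a paraphrase of it.

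One further small point: the $b<n$ constraint you impose (implicitly, from the canonical form of Proposition~\ref{prop:repr}) is not actually needed once you observe that the $K_0$ formula of Proposition~\ref{prop:Ktheory2} depends only on the \emph{difference} $r_{1,1}-r_{2,1}$, which is unchanged when one rewrites $n$ copies of $\diag(\pi_{\infty_1},\pi_{\infty_2})$ as a single $\pi_0$. Allowing $b\geq n$ enlarges the admissible window above and is in fact necessary for your scheme to close in some parameter ranges (e.g.\ small $j$ with $k'$ large); the paper avoids the issue altogether.
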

\begin{proof}
\eqref{genmaps1}: let $b=pn-1$ and $\xi_i$, for $i\leq bp$, be the continuous functions $\xi_i\colon [0,1]\to[0,1]$ as defined in Proposition~\ref{prop:bhishansmaps}\eqref{emb:c0}. Define $\varphi\colon B_{n,k}\to C([0,1],M_{2pnbk})$ by 
\[
\pi_{\varphi,t}=\diag(\pi_{\xi_1(t)},\ldots,\pi_{\xi_{pb}(t)})
\]
for all $t\in [0,1]$. Notice that 
\[
\pi_{\varphi,0}\sim_u\diag(\underbrace{\pi_{\infty_1}}_{np(n-1)},\underbrace{\pi_{\infty_2}}_{np(n-1)}, \underbrace{\pi_{1/p}}_{pn}, \ldots, \underbrace{\pi_{(p-1)/p}}_{pn})
\]
and 
\[
\pi_{\varphi,1}\sim_u\diag(\underbrace{\pi_{\infty_1}}_{b(n-1)},\underbrace{\pi_{\infty_2}}_{b(n-1)}, \underbrace{\pi_{1/p}}_{b}, \ldots, \underbrace{\pi_{p-1)/p}}_{b}, 0_{b2k}).
\]
 If $0\leq j\leq n-1$ let 
\[
a_{\varphi,j}=\diag(\underbrace{\pi_{\infty_1}}_{j},\underbrace{\pi_{\infty_2}}_{n-1-j}, \pi_{1/p}, \pi_{3/p}, \ldots, \pi_{(p-2)/p})
\]
and 
\[
b_{\varphi,j}=\diag(\underbrace{\pi_{\infty_1}}_{n-1-j},\underbrace{\pi_{\infty_2}}_{j}, \pi_{2/p}, \pi_{4/p}, \ldots, \pi_{(p-1)/p}).
\]
Notice that $\pi_{\varphi,0}\sim_u\diag(\underbrace{a_{\varphi,j}}_{pn}, \underbrace{b_{\varphi,j}}_{pn})$ and $\pi_{\varphi,1}\sim_u\diag(\underbrace{a_{\varphi,j}}_b,0_{bk},\underbrace{b_{\varphi,j}}_b,0_{bk})$, therefore we can find a unitary $u_j$ such that the map $\varphi_{n,k,p,j}=\Ad(u_{j})\circ\varphi$ is as required, as $K_0(\varphi_{n,k,p,j})=2j-(n-1)$.

\eqref{genmaps2}: Let $\iota_{k'}$ be the amplification map as in Proposition~\ref{prop:bhishansmaps}\eqref{emb:c1}.  Let 
\[
a_{\psi,j}=\diag(\underbrace{\pi_{\infty_1}}_{j},\underbrace{\pi_{\infty_2}}_{k'-j})\text{ and }
b_{\psi,j}=\diag(\underbrace{\pi_{\infty_1}}_{k'-j},\underbrace{\pi_{\infty_2}}_{j}).
\]
Then 
\[
\pi_{\iota_{k'},0}\sim_u\diag(\underbrace{a_{\psi,j}}_n,\underbrace{b_{\psi,j}}_n)\text{ and }\pi_{\iota_{k'},1}\sim_u\diag(\underbrace{a_{\psi,j}}_{n-1},0_{kk'},\underbrace{b_{\psi,j}}_{n-1},0_{kk'}).
\]
Therefore there is a unitary $u_j$ such that the map $\psi_{n,k,k',j}=\Ad(u_j)\circ\psi\colon B_{n,k}\to B_{n,kk'}$ has  $K$-theory equal to $2j-k'$.

\eqref{genmaps3}: Let $\xi_1,\ldots,\xi_{k'(nk-1)}\colon [0,1]\to[0,1]$ be continuous maps such that
\[
\xi_i(0)=\begin{cases}
0&\text{ if }i\leq k'(n-1)\\
1&\text{else}
\end{cases},\,\,\,\,\,\,\,\xi_i(1)=1
\]
and let $\rho\colon B_{n,k}\to M_{2nk(nk-1)k'}$ be given by $\rho(f)=\diag(f\circ\xi_1,\ldots,f\circ\xi_{(nk-1)k'})$. Fix $j$ with $|j|\leq (n-1)k'$. In each of the three cases below, we will define $a_{\rho,j}$ and $b_{\rho,j}$ such that
\[
\pi_{\rho,0}\sim_u\diag(\underbrace{a_{\rho,j}}_{nk},\underbrace{b_{\rho,j}}_{nk})
\]
and
\[
\pi_{\rho,1}\sim_u\diag(\underbrace{a_{\rho,j}}_{nk-1},0_{(nk-1)k'},\underbrace{b_{\rho,j}}_{nk-1},0_{(nk-1)k'}),
\]
giving a map $\rho_j=\Ad(u_j)\circ\rho\colon B_{n,k}\to B_{nk,(nk-1)k'}$ for a suitable unitary $u_j\in C([0,1],M_{2nk(nk-1)k'})$. We will be done once we have computed the $K$-theory using Proposition~\ref{prop:Ktheory2}.

\noindent \underline{\textbf{Case 1:}} $(n-1)k'-j=2r$. Let 
\[
a_{\rho,j}= \diag(\underbrace{\pi_{\infty_1}}_{(n-1)k'-r},\underbrace{\pi_{\infty_2}}_{r},0_{(k-1)k'})
\] and 
\[
b_{\rho,j}= \diag(\underbrace{\pi_{\infty_2}}_{(n-1)k'-r},\underbrace{\pi_{\infty_1}}_{r},0_{(k-1)k'}).
\]
Then
\[
K_0(\rho_j)=\frac{1}{2}((n-1)k'-r-r+(n-1)k'-r-r)=(n-1)k'-2r=j.
\]

\noindent \underline{\textbf{Case 2:}} $(n-1)k'-j=1$. Let 
\[
a_{\rho,j}= \diag(\underbrace{\pi_{\infty_1}}_{(n-1)k'},\pi_{\infty_2},0_{(k-1)k'-k})
\] 
and 
\[
b_{\rho,j}= \diag(\underbrace{\pi_{\infty_2}}_{(n-1)k'-1},0_{(k-1)k'+k})
\]
Then $K_0(\rho_j)=\frac{1}{2}((n-1)k'-1+(n-1)k'-1)=(n-1)k'-1=j$.

\noindent \underline{\textbf{Case 3:}} $(n-1)k'-j=2r+1$, $r>0$. Let 
\[
a_{\rho,j}= \diag(\underbrace{\pi_{\infty_1}}_{(n-1)k'-(r+1)},\underbrace{\pi_{\infty_2}}_{r},0_{(k-1)k'+k})
\] and 
\[
b_{\rho,j}= \diag(\underbrace{\pi_{\infty_2}}_{(n-1)k'-r},\underbrace{\pi_{\infty_1}}_{r+1},0_{(k-1)k'-k}).
\] 
Then $K_0(\rho_j)=\frac{1}{2}((n-1)k'-(r+1)-r+(n-1)k'-r-(r+1))=(n-1)k'-(2r+1)=j$.

Lastly, since all the maps involved are finite-to-one, all the constructed maps satisfy the equivalent conditions of Proposition~\ref{prop:tpstarhoms}.
\end{proof}

We intend to define a `fast-enough' sequence of generalised Razak blocks. Let $A_0=B_{2,1}$. If $A_i=B_{n_i,k_i}$ has been defined, let $p_i$ be an odd number with the property that for all $^*$-homomorphisms with the same $K$-theory $\rho_1,\rho_2\colon A_{j}\to A_{i+1}$, for $j\leq i$, then it is enough to add $\frac{p_i-1}{2}$ point representations to make $\rho_1$ and $\rho_2$ unitarily equivalent. This is possible by Lemma~\ref{lemma:KtheoryGen2} and Remark~\ref{remark:points}.
\begin{defin}\label{defin:Z0}
Let $n_1=2$ and $k_1=1$. If $i>1$, define $n_i=p_{i-1}n_{i-1}$ and $k_i=(n_i-1)k_{i-1}$, where $p_i$ is defined as in the above paragraph. Let $A_i=B_{n_i,k_i}$ and $\varphi_i=\varphi_{n_i,k_i,p_i,n_i/2}$ be the map defined in Proposition~\ref{prop:genmaps}\eqref{genmaps1}. Define $\mathcal Z_0=\lim(A_i,\varphi_i)$.
\end{defin}
\begin{remark}\label{remark:Z0UHF}
Again by classification (\cite{GongLinNiu.Class1,GongLin.Class2} or \cite{Robert:2010qy}), we have that if $A$ is a limit of generalised Razak blocks which is simple, monotracial, and whose $K_0$ is $\mathbb Z$, then $A$ is isomorphic to $\mathcal Z_0$. Similarly, if $p$ is a prime number and $p^\infty$ the supernatural number of infinite type whose only factor is $p$, the algebra $\mathcal Z_0\otimes M_{p^\infty}$ can be obtained by combining maps of the form $\varphi_{n_i,k_i,p_i,n_i/2}$ from Proposition~\ref{prop:genmaps}\eqref{genmaps1} (to obtain simplicity and unique trace), and maps of the form $\psi_{n,k,p,p}$ from Proposition~\ref{prop:genmaps}\eqref{genmaps2} (to obtain a limit whose $K_0$ is $\mathbb Z[1/p]$). We will show that such objects are generic by showing they are the \Fraisse limits of their respective classes.

If we consider a direct system of generalised Razak blocks $(A_i,\varphi_i)$ whose limit is simple, monotracial, and such that for infinitely many $i$, $\varphi_i$ has $K$-theory equal to $0$, then $\lim_i(A_i,\varphi_i)\cong\mathcal W$ (by classification methods, or again, by hand). Again to prove genericity, we will show that $\mathcal K_0$ is a \Fraisse class, and $\mathcal W$ its \Fraisse limit.
\end{remark}

\begin{corollary} \label{cor:jep}
The classes $\mathcal{K}_{\mathcal W}$, $\mathcal{K}_0$, $\mathcal{K}_1$, and $\mathcal{K}_{\bar p}$, where $\bar p$ is a supernatural number of infinite type, have the JEP.
\end{corollary}

\begin{proof}
Since
\[
\Mor_{GR,1}\subseteq \Mor_{GR,\bar p}\subseteq\Mor_{GR,0}
\]
whenever $\bar p$ is a supernatural number of infinite type, it is enough to prove JEP for $\mathcal{K}_{\mathcal W}$ and $\mathcal{K}_1$.

For Razak blocks, let $(A_{n,k},\sigma),(A_{n',k'},\tau)\in\Obj_R$. Let $C=A_{nn',(nn'-1)kk'}$ and let $\lambda$ be the Lebesgue trace on $C$. Define
\[
\varphi_1=\psi_{nn',(nn'-1)k,k'}\circ\varphi_{n,k,n'}\colon A\to C
\] 
and 
\[
\varphi_2=\psi_{nn',(nn'-1)k',k}\circ\varphi_{n',k',n}\colon B\to C,
\]
where the maps $\varphi_{\cdot,\cdot,\cdot}$ and $\psi_{\cdot,\cdot,\cdot}$ refer to those constructed in Proposition~\ref{prop:bhishansmaps}. Let $\tilde\varphi_1=\varphi_1\circ\varphi_{\sigma\mapsto(\lambda\circ\varphi_1)}$ and $\tilde\varphi_2=\varphi_2\circ\varphi_{\tau\mapsto(\lambda\circ\varphi_2)}$, where $\varphi_{\tau\mapsto\sigma}$ is the transition map constructed in Proposition~\ref{prop:onetoanother}. Since all the maps used in Propositions~\ref{prop:bhishansmaps} and \ref{prop:onetoanother} satisfy the equivalent conditions of Proposition~\ref{prop:tpstarhoms}, $\tilde\varphi_1$ and $\tilde\varphi_2$ belong to $\Mor_R$; this shows that $\mathcal K_{\mathcal W}$ has the JEP. 

For generalised blocks, let $(B_{n,k},\sigma), (B_{m,l},\tau)\in\Obj_{GR}$. By Proposition~\ref{prop:genmaps}\eqref{genmaps3} with $k'=ml(ml-1)$ and $k'=nk(nk-1)$ respectively, there are maps $\varphi_1\colon B_{n,k}\to B_{nk,(nk-1)ml(ml-1)}$ and $\psi_1\colon B_{m,l}\to B_{ml,(ml-1)nk(nk-1)}$ with trivial $K$-theory. Another application of Proposition~\ref{prop:genmaps}\eqref{genmaps3} with $k'=2$ gives maps
\[
\varphi_2\colon B_{nk,(nk-1)ml(ml-1)}\to B_{nk(nk-1)ml(ml-1), 2(nk(nk-1)ml(ml-1)-1)}
\]
and 
\[
\psi_2\colon B_{ml,(ml-1)nk(nk-1)}\to B_{ml(ml-1)nk(nk-1), 2(ml(ml-1)nk(nk-1)-1)},
\]
again with trivial $K$-theory. All maps involved satisfy the equivalent conditions of Proposition~\ref{prop:tpstarhoms}, and therefore belong to $\Mor_{GR,1}$; hence the maps $\varphi_2\circ\varphi_1\circ\varphi_{\sigma\mapsto(\lambda\circ\varphi_2\circ\varphi_1)}$ and $\psi_2\circ\psi_1\circ\varphi_{\tau\mapsto(\lambda\circ\psi_2\circ\psi_1)}$ witness the JEP.
\end{proof}

\section{Distances} \label{S.distances}

We define and study several distances between $^*$-homomorphisms, measures, and diagonal maps.

\subsection{Distances between $^*$-homomorphisms} \label{subsection:distances}
\sloppy
Let $A$ and $B$ be \cstar-algebras, let $G\subseteq A$ be compact and let $\varepsilon>0$. For $^*$-homomorphisms $\varphi,\psi\colon A\to B$ we define the \emph{unitary distance relative to $G$} between $\varphi$ and $\psi$ as
\[
d_\mathcal{U}^G(\varphi,\psi) = \inf_{u\in\mathcal{U}(\tilde B)}\sup_{f\in G}\norm{\varphi(f)-u\psi(f)u^*},
\]
where $\mathcal U(\tilde B)$ is the unitary group of the unitisation of $B$. When $G$ is a ``separating family'', for example, if $G$ equals the set of $1$-Lipschitz contractions in a generalised Razak block, this gives a meaningful notion of distance between approximate unitary equivalence classes of $^*$-homomorphisms. If $B$ is finite dimensional and $f\in A$ is positive, the unitary distance $d_{\mathcal U}^{\{f\}}(\varphi,\psi)$ equals the optimal matching distance between the eigenvalues of $\varphi(f)$ and $\psi(f)$.
\fussy

 Another important distance relates diagonal maps. Let $A\subseteq C([0,1],M_n)$ and $B\subseteq C([0,1],M_m)$, and let $\varphi,\psi\colon A\to B$ be diagonal maps with associated $\{\xi_i^\varphi\}_{i\leq j}$ and $\{\xi_i^\psi\}_{i\leq j}$. The \emph{diagonal distance} between $\varphi$ and $\psi$ is defined as
 \[
 d_{\partial}(\varphi,\psi)=\sup_{t\in[0,1]}\sup_i|\xi_i^\varphi-\xi_i^\psi|.
 \]
 \begin{lemma}\label{L.dist1}
Let $A$ and $B$ be (generalised) Razak blocks. Let $G\subseteq A$ be a set of $L$-Lipschitz functions. Let $\varphi,\psi\colon A \to B$ be diagonal maps. Then 
\[
\sup_{t\in [0,1]}d_{\mathcal U}^G(\pi_{\varphi,t},\pi_{\psi,t})\leq L\cdot d_{\partial}(\varphi,\psi).
\]
Moreover, if $A$ and $B$ are Razak blocks, then
\[
\sup_{t\in (0,1)\cup\{\infty\}}d_{\mathcal U}^G(\pi_{\varphi,t},\pi_{\psi,t})=\sup_{t\in [0,1]}d_{\mathcal U}^G(\pi_{\varphi,t},\pi_{\psi,t}).
\]
\end{lemma}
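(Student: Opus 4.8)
\textbf{Proof plan for Lemma~\ref{L.dist1}.}

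The plan is to prove the two assertions separately, the first being a pointwise matching estimate and the second a continuity/density argument. For the first inequality, fix $t\in[0,1]$ and recall that $\pi_{\varphi,t}\sim_u\diag(\pi_{\xi_1^\varphi(t)},\ldots,\pi_{\xi_j^\varphi(t)})$ and $\pi_{\psi,t}\sim_u\diag(\pi_{\xi_1^\psi(t)},\ldots,\pi_{\xi_j^\psi(t)})$. Since both representations land in the same matrix algebra $M_m$, and since each $\pi_s$ (for $s\in(0,1)$) is the point evaluation $f\mapsto f(s)$ while $\pi_\infty$ (resp.\ $\pi_{\infty_i}$) is evaluation at the appropriate boundary multiplicity, the idea is to build, block-by-block, a single unitary $u\in M_m$ (or in $M_m$ viewed inside $\widetilde{M_m}=M_m$) conjugating the $i$th block of $\pi_{\psi,t}$ to the $i$th block of $\pi_{\varphi,t}$ up to an error controlled by $\norm{f(\xi_i^\varphi(t))-f(\xi_i^\psi(t))}$. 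For $f$ that is $L$-Lipschitz this last quantity is at most $L|\xi_i^\varphi(t)-\xi_i^\psi(t)|\le L\cdot d_\partial(\varphi,\psi)$. Assembling the block unitaries (together with the fixed unitaries implementing the two $\sim_u$ equivalences above) gives one unitary witnessing $d_{\mathcal U}^G(\pi_{\varphi,t},\pi_{\psi,t})\le L\cdot d_\partial(\varphi,\psi)$; taking the supremum over $t$ finishes the first claim. One should take a little care that the blocks of the two diagonal representations can be matched in the \emph{same} order---this is exactly why Definition~\ref{defin:diag} requires $\xi_i\le\xi_{i+1}$, so the $i$th block of $\pi_{\varphi,t}$ is paired with the $i$th block of $\pi_{\psi,t}$---and that when some $\xi_i(t)$ hits an endpoint $0$ or $1$ one uses the boundary description from Proposition~\ref{prop:repr}; these are routine bookkeeping.

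For the second (``Moreover'') assertion, note that $\le$ is trivial since the left-hand supremum is over a subset of $[0,1]$ (after identifying $\pi_\infty$ with the boundary data at $0$); the content is $\ge$, i.e.\ that the value at the endpoints $t=0,1$ is already attained on $(0,1)\cup\{\infty\}$. Here the plan is to use continuity: for a fixed finite set $G$ of Lipschitz contractions, the map $t\mapsto d_{\mathcal U}^G(\pi_{\varphi,t},\pi_{\psi,t})$ on a Razak block is controlled by the eigenvalue-matching description recalled just before the lemma ($d_{\mathcal U}^{\{f\}}$ is the optimal matching distance between eigenvalues of $\varphi(f)$ and $\psi(f)$), so it is continuous on $[0,1]$ because $t\mapsto\varphi(f)(t),\psi(f)(t)$ are continuous and eigenvalue-matching distance depends continuously on the matrices. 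Since $(0,1)$ is dense in $[0,1]$, the supremum over $(0,1)$ equals that over $[0,1]$; and the point $\infty$ contributes the value at $t=0$ via $\pi_{\varphi,0}=\diag(\underbrace{\pi_{\varphi,\infty}}_{n})$, which is dominated by (in fact equals, up to the multiplicity block structure) the matching distance at $t=0$. So the endpoint values are harmless and the two suprema coincide.

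The main obstacle I expect is the first step: producing a \emph{single} conjugating unitary in $M_m$ rather than separate unitaries for separate functions $f\in G$, with the error bounded uniformly over $G$. The subtlety is that $d_{\mathcal U}^G$ asks for one unitary that works simultaneously for all $f\in G$; but because $\varphi,\psi$ are \emph{diagonal} with the \emph{same} associated index set $\{1,\ldots,j\}$ and ordered maps, the unitary only needs to permute/rotate within the coordinate blocks determined by the $\xi_i$, independently of $f$, so one unitary does the job for every $f$ at once. Once this structural observation is in place the estimate $\norm{\varphi(f)(t)-u\,\psi(f)(t)\,u^*}\le\max_i\norm{f(\xi_i^\varphi(t))-f(\xi_i^\psi(t))}\le L\cdot d_\partial(\varphi,\psi)$ is immediate from the block-diagonal form and the operator norm on block-diagonal matrices being the max of the block norms. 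The ``Moreover'' part is comparatively soft, relying only on continuity of eigenvalue-matching and density of $(0,1)$ in $[0,1]$, plus the boundary description of $\pi_\infty$.
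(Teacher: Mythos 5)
Your proof of the first inequality matches the paper's: both compare $\pi_{\varphi,t}$ and $\pi_{\psi,t}$ block-by-block via the ordered associated maps $\xi_i^\varphi,\xi_i^\psi$, obtain a single conjugating unitary that works simultaneously for all $f\in G$ (since it only rearranges coordinate blocks, independently of $f$), and conclude $d_{\mathcal U}^G(\pi_{\varphi,t},\pi_{\psi,t})\le\sup_i\sup_{f\in G}\norm{f(\xi_i^\varphi(t))-f(\xi_i^\psi(t))}\le L\cdot d_\partial(\varphi,\psi)$; the structural observation you flag is exactly what the paper's one-line estimate silently uses. For the ``Moreover'' direction the two arguments differ only slightly in emphasis: the paper appeals to Hausdorffness of the spectrum of a Razak block and the relation $\pi_{\varphi,0}=\diag(\underbrace{\pi_{\varphi,\infty}}_n)$ (and, implicitly, $\pi_{\varphi,1}=\diag(\underbrace{\pi_{\varphi,\infty}}_{n-1},0_k)$) to identify the endpoint contributions with the value at $\infty$, whereas you derive $\sup_{(0,1)}=\sup_{[0,1]}$ from continuity of $t\mapsto d_{\mathcal U}^G(\pi_{\varphi,t},\pi_{\psi,t})$ and density of $(0,1)$, and then handle $\infty$ via the same multiplicity identification at $t=0$. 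Both routes are correct; yours is a touch more explicit about the endpoint $t=1$, which the paper's ``quantified Hausdorffness'' remark covers only implicitly.
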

\begin{proof}
Let $m$ be such that $B\subseteq C([0,1],M_m)$ (that is, $m=nk$ if $B=A_{n,k}$ and $m=2nk$ if $B=B_{n,k}$). Let $\{\xi_i^\varphi\}$ and $\{\xi_i^\psi\}$ be the continuous maps associated to $\varphi$ and $\psi$, so that for all $t\in [0,1]$ we have
\[
\pi_{\varphi,t}\sim_u\diag(\pi_{\xi_1^\varphi(t)},\ldots,\pi_{\xi_j^\varphi(t)}) \text{ and }\pi_{\psi,t}\sim_u\diag(\pi_{\xi_1^\psi(t)},\ldots,\pi_{\xi_j^\psi(t)}). 
\]
Then 
\begin{align*}
d_{\mathcal U}^G(\pi_{\varphi,t},\pi_{\psi,t})
&\leq \sup_{f\in G}\sup_i\norm{f(\xi_i^\varphi(t))-f(\xi_i^\psi(t))}\\
&\leq \sup_{f\in G}\sup_i L\cdot |\xi_i^\varphi(t)-\xi_i^\psi(t)|\leq L\cdot d_{\partial}(\varphi,\psi),
\end{align*}
where the second to last inequality follows from the assumption that all elements of $G$ are $L$-Lipschitz.

The second statement follows from the fact that for a Razak block the space of representations is Hausdorff when endowed with the hull-kernel topology. Since $\pi_{\varphi,0}=\diag(\underbrace{\pi_{\varphi,\infty}}_n)$, if $\pi_{\varphi,0}\sim_u\pi_{\psi,0}$, then $\pi_{\varphi,\infty}\sim_u\pi_{\psi,\infty}$. Quantifying this, we get 
\[
d_{\mathcal U}^G(\pi_{\varphi,0},\pi_{\psi,0})=d_{\mathcal U}^G(\pi_{\varphi,\infty},\pi_{\psi,\infty}).\qedhere
\]
\end{proof}
\begin{remark}\label{Rem.GenBad}
The second part of Lemma~\ref{L.dist1} does not hold for generalised Razak blocks, as the hull-kernel topology is not Hausdorff, and it is not true that if $\pi_{\varphi,0}\sim_u\pi_{\psi,0}$ for all $t$ then $\pi_{\varphi,\infty_1}\sim_u\pi_{\psi,\infty_1}$. For example, consider the identity map on $B_{n,k}$ and let $\varphi$ be the map obtained by swapping $a_f$ and $b_f$ (e.g., Corollary~\ref{cor:Ktheoryreverting}). Then for all $G\subseteq B_{n,k}$ and $t\in [0,1]$ we have that 
\[
d_{\mathcal U}^G(\pi_{Id,t},\pi_{\psi,t})=0,
\]
 but for every $f\in B_{n,k}$ such that $a_f=-(1_k)$ and $b_f=1_k$ we have that 
\[
d_{\mathcal U}^{\{f\}}(\pi_{Id,\infty_1},\pi_{\varphi,\infty_1})=d_{\mathcal U}^{\{f\}}(\pi_{Id,\infty_2},\pi_{\varphi,\infty_2})=2.
\]
One immediately notices that the maps of Remark~\ref{Rem.GenBad} have different $K$-theory.
\end{remark}

The following shows that, for maps with small diameter, $d_\partial$ can be controlled by traces.

\begin{lemma}\label{lemma:rewriting2}
Let $A$ and $B$ be (generalised) Razak blocks. Let $\sigma\in T_{fd}(A)$ and $\tau\in T_{fd}(B)$, and suppose that $\varphi,\psi\colon (A,\sigma)\to (B,\tau)$ are diagonal maps with $\partial(\varphi),\partial(\psi)<\varepsilon$. Then
\[
d_\partial(\varphi,\psi)<3\varepsilon.
\]
\end{lemma}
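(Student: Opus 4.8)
The plan is to show that, for each parameter $t_0\in[0,1]$ and each index $i_0$, the value $\xi_{i_0}^\varphi(t_0)$ is determined up to an error $\varepsilon$ by the trace $\sigma$ alone — it is within $\varepsilon$ of the $(i_0/j)$-quantile of the probability measure $\mu_\sigma$ — and likewise for $\psi$. Since $\varphi$ and $\psi$ both pull $\tau$ back to $\sigma$, the two values must then lie within $2\varepsilon$ of one another, which is more than the asserted bound.

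First I recall that, being diagonal maps $A\to B$, $\varphi$ and $\psi$ have the same number $j$ of associated maps ($j=m/n$ if $B\subseteq C([0,1],M_m)$). Write $\mu_\sigma,\mu_\tau$ for the measures associated to $\sigma,\tau$; as $\sigma,\tau$ are diffuse these are atomless and, in the generalised case, carry no mass at the points at infinity. From the computation in the proof of Proposition~\ref{prop:tpstarhoms} (which only uses that $\varphi$ is trace preserving), for every Borel set $U\subseteq(0,1)$ one has
\[
\mu_\sigma(U)=\frac1j\sum_{i\le j}\mu_\tau\bigl(\{t : \xi_i^\varphi(t)\in U\}\bigr),
\]
and the analogous identity for $\psi$. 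Taking $U=(0,1)$ and using $\mu_\sigma((0,1))=1$ forces $\mu_\tau(\{t : \xi_i^\varphi(t)\in(0,1)\})=1$ for every $i$; in particular $\mu_\tau(\{t : \xi_i^\varphi(t)\in\{0,1\}\})=0$, and similarly for $\psi$. Set $G(x)=\mu_\sigma([0,x])$, a continuous nondecreasing function equal to $0$ for $x\le 0$ and to $1$ for $x\ge1$.

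Now fix $t_0$ and $i_0\le j$ and put $a=\xi_{i_0}^\varphi(t_0)$. Since $\partial(\xi_{i_0}^\varphi)<\varepsilon$, every value of $\xi_{i_0}^\varphi$ lies in $(a-\varepsilon,a+\varepsilon)$; as the associated maps are pointwise ordered, $\xi_i^\varphi(t)>a-\varepsilon$ for all $t$ when $i\ge i_0$, while $\xi_i^\varphi(t)<a+\varepsilon$ for all $t$ when $i\le i_0$. Plugging $U=(0,a-\varepsilon]$ into the displayed identity kills every summand with $i\ge i_0$, giving $G(a-\varepsilon)\le(i_0-1)/j$; plugging in $U=(0,a+\varepsilon)$ and using that $\mu_\tau$ puts no mass on $\{\xi_i^\varphi=0\}$ makes every summand with $i\le i_0$ equal $1$, giving $G(a+\varepsilon)\ge i_0/j$ (with the evident reading when $a\pm\varepsilon$ falls outside $[0,1]$). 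The same argument for $\psi$ with $b=\xi_{i_0}^\psi(t_0)$ yields $G(b-\varepsilon)\le(i_0-1)/j$ and $G(b+\varepsilon)\ge i_0/j$.

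Finally, if $a\ge b+2\varepsilon$ then $a-\varepsilon\ge b+\varepsilon$, so monotonicity of $G$ gives $\tfrac{i_0-1}{j}\ge G(a-\varepsilon)\ge G(b+\varepsilon)\ge\tfrac{i_0}{j}$, which is absurd; by symmetry also $b<a+2\varepsilon$, hence $|\xi_{i_0}^\varphi(t_0)-\xi_{i_0}^\psi(t_0)|<2\varepsilon$. Taking the supremum over $t_0$ and $i_0$ gives $d_\partial(\varphi,\psi)\le 2\varepsilon<3\varepsilon$. The argument is short, and there is no real obstacle; the only points requiring a little care are the behaviour at the endpoints $0,1$ of the parameter interval (handled by the no-atoms remark in the second paragraph) and checking that the measure identity from Proposition~\ref{prop:tpstarhoms}, established in the excerpt for Razak blocks, passes unchanged to generalised Razak blocks, which it does since for diffuse traces the relevant measures live on $(0,1)$.
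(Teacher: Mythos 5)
Your proof is correct and takes essentially the same approach as the paper's: both use the pushforward identity from the proof of Proposition~\ref{prop:tpstarhoms} to compare the cumulative distribution function $G(x)=\mu_\sigma([0,x])$ at two thresholds and derive a contradiction from its monotonicity together with the pointwise ordering of the associated maps. The paper anchors its thresholds at $c=\max\xi_i^\varphi$ and $c'=c+\varepsilon/2$, whereas you anchor at $\xi_{i_0}^\varphi(t_0)\pm\varepsilon$, incidentally yielding the marginally sharper bound $2\varepsilon$.
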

\begin{proof}
Let $\{\xi_i^\varphi\}_{i\leq l}$ and $\{\xi_i^\psi\}_{i\leq l}$ the continuous maps associated to $\varphi$ and $\psi$ respectively. Suppose that there are $i\leq l$ and $t\in[0,1]$ such that $\xi_i^\varphi(t)+3\varepsilon<\xi_i^\psi(t)$. 
Let $c=\max\xi_i^\varphi$ and $d=\min\xi^\psi_{i}$. Since $\varphi$ and $\psi$ both have diameter $<\varepsilon$, then $d-c>\varepsilon$. Let $c'=c+\varepsilon/2$. Notice that if $j\leq i$, then the image of $\xi_{j}^\varphi$ is included in $[0,c]$, and if $i\leq j$, then the image of $\xi^\psi_j$ is contained in $[c',1]$. Since $\sigma=\varphi^*(\tau)$, then 
\[
\frac{i}{l}=\sum_{j\leq i}\mu_\tau((\xi^\varphi_j)^{-1}([0,1]))\leq \mu_\sigma([0,c]),
\]
 and since $\sigma=\psi^*(\tau)$, then 
 \[
\mu_\sigma([0,c'))\leq \sum_{j< i}\mu_\tau((\xi^\psi_j)^{-1}([0,1]))=\frac{i-1}{l}.
 \]
Since $c<c'$, this is a contradiction.
\end{proof}

\subsection{Measures}
Let $(X,d)$ be a separable metric space. Let $\mathcal{M}(X)$ denote the space of Borel probability measures on $X$, let $\mathcal{M}_f(X)$ denote those measures in $\mathcal{M}(X)$ that are faithful, and let $\mathcal{M}_{fd}(X)$ denote those that are faithful and diffuse.

There are many distances that provide a metrisation of the $w^*$-topology on $\mathcal{M}(X)$, such as the Wasserstein metric, and the L\'evy-Prokhorov metric (see e.g., \cite[\S2]{Jacelon:2021wa}).
Most useful in the context of \cstar-algebras is the \emph{optimal matching distance} (or \emph{bottleneck distance})
\[
\bb(\mu,\nu) = \sup_{U\subseteq X\,\text{open}}\inf\{r>0 \mid \mu(U) \le \nu(U_r) \textrm{ and } \nu(U) \le \mu(U_r)\},
\]
where $U_r=\{x\in U\mid d(U,x)<r\}$. Notice that for $X=[0,1]$, it is enough to quantify over open intervals (see e.g., the proof of \cite[Theorem 2.1]{Hiai:1989aa}). Moreover, when restricted to faithful, diffuse measures, $\bb$ is also a metrisation of the $w^*$-topology. 

Recall that if $A$ is a (generalised) Razak block and $\sigma,\tau\in T_{fd}(A)$, then $\varphi_{\sigma\mapsto\tau}$ denotes the transition map $(A,\sigma)\to (A,\tau)$ of Proposition~\ref{prop:onetoanother}. The following is a consequence of \cite[Proposition 2.2]{Jacelon:2021wa}.
\begin{proposition}\label{prop:measuring}
Let $A$ be a (generalised) Razak block, and let $\sigma,\tau\in T_{fd}(A)$. Then $
d_\partial(Id,\varphi_{\sigma\mapsto\tau})\le\bb(\mu_\sigma,\mu_\tau).$\qed
\end{proposition}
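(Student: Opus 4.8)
The plan is to reduce the inequality to a one-variable estimate comparing the graphs of two cumulative distribution functions, and then to feed the correct test intervals into the definition of $\bb$.

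First I would unpack both sides. By Proposition~\ref{prop:onetoanother}, $\varphi_{\sigma\mapsto\tau}$ is the diagonal automorphism $f\mapsto f\circ\xi$, where $\xi=\xi_{\sigma\mapsto\tau}$ is the unique map satisfying $\mu_\tau([0,t])=\mu_\sigma([0,\xi(t)])$ for all $t\in[0,1]$; thus $\varphi_{\sigma\mapsto\tau}$ has the single associated map $\xi$, whereas $Id$ has the single associated map $t\mapsto t$, and therefore
\[
d_\partial(Id,\varphi_{\sigma\mapsto\tau})=\sup_{t\in[0,1]}|t-\xi(t)|.
\]
Since $\sigma$ and $\tau$ are faithful and diffuse, the cumulative functions $F_\sigma$ and $F_\tau$ defined by $F_\sigma(t)=\mu_\sigma([0,t])$ and $F_\tau(t)=\mu_\tau([0,t])$ are continuous, strictly increasing bijections of $[0,1]$, and the defining relation for $\xi$ reads $F_\tau=F_\sigma\circ\xi$. (In the generalised-Razak case a faithful diffuse trace puts no mass on $\infty_1,\infty_2$, so $\mu_\sigma$ and $\mu_\tau$ may be viewed as atomless probability measures on $X=[0,1]$, and $\bb(\mu_\sigma,\mu_\tau)$ is computed there.)

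Next I would carry out the core estimate. Fix $t\in[0,1]$ and $r>\bb(\mu_\sigma,\mu_\tau)$; since $U_r$ is increasing in $r$, the defining inequalities of $\bb$ hold for this $r$ and every open $U$, and since for $X=[0,1]$ it suffices to quantify over open intervals (atomlessness making endpoints immaterial), I may test them on half-open intervals of the form $[0,c)$. Applying $\mu_\sigma(U)\le\mu_\tau(U_r)$ with $U=[0,\xi(t))$ gives $F_\sigma(\xi(t))\le F_\tau(\min\{\xi(t)+r,1\})$; combined with $F_\sigma(\xi(t))=F_\tau(t)$ and the strict monotonicity of $F_\tau$ this yields $t\le\xi(t)+r$. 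Applying $\mu_\tau(U)\le\mu_\sigma(U_r)$ with $U=[0,t)$ gives $F_\tau(t)\le F_\sigma(\min\{t+r,1\})$, and with $F_\tau(t)=F_\sigma(\xi(t))$ and strict monotonicity of $F_\sigma$ this yields $\xi(t)\le t+r$. Hence $|t-\xi(t)|\le r$, and letting $r\downarrow\bb(\mu_\sigma,\mu_\tau)$ and then taking the supremum over $t$ finishes the proof.

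I do not expect a genuine obstacle: the argument is essentially the observation that $\xi_{\sigma\mapsto\tau}=F_\sigma^{-1}\circ F_\tau$, so that after the substitution $t=F_\tau^{-1}(u)$ the quantity $\sup_t|t-\xi(t)|$ becomes $\|F_\tau^{-1}-F_\sigma^{-1}\|_\infty$, which is exactly the content of \cite[Proposition~2.2]{Jacelon:2021wa} identifying $\bb(\mu_\sigma,\mu_\tau)$ with the uniform distance between quantile functions. The only points needing a little care are that $[0,c)$ is a legitimate test set for $\bb$ on $[0,1]$, that atomlessness lets one pass freely between closed and half-open intervals, and that in the generalised case the relevant measures genuinely live on $[0,1]$ because faithful diffuse traces carry no mass at $\infty_1,\infty_2$.
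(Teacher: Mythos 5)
Your proof is correct. The paper gives no argument at all for this proposition: it simply notes that the statement ``is a consequence of \cite[Proposition 2.2]{Jacelon:2021wa}'' and places a \verb|\qed| in the statement. What you have written is precisely the elementary unpacking of that citation, restricted to the one direction that is needed: rewriting $\xi_{\sigma\mapsto\tau}=F_\sigma^{-1}\circ F_\tau$ via the cumulative distribution functions, observing that $d_\partial(\mathrm{Id},\varphi_{\sigma\mapsto\tau})=\sup_t|t-\xi(t)|$, and then feeding the two test sets $[0,\xi(t))$ and $[0,t)$ into the definition of $\bb$ to get $|t-\xi(t)|\le r$ for any $r>\bb(\mu_\sigma,\mu_\tau)$. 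The small technical points you flag (that faithfulness and diffuseness make $F_\sigma,F_\tau$ continuous strictly increasing bijections of $[0,1]$; that $[0,c)$ is relatively open in $[0,1]$ and atomlessness lets one ignore endpoints; that diffuse traces on a generalised Razak block carry no mass at $\infty_1,\infty_2$, and none at $1$ in the Razak case) are all handled correctly and are exactly what make the reduction to $[0,1]$ legitimate. So the proposal is not a different route but a self-contained version of the route the paper takes implicitly; what it buys is independence from the external reference, at the cost of a short but routine CDF computation.
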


We now link our measure distance to diagonal maps. Fix $n,k\in\NN$. Notice that the maps $\varphi_{n,k,p}$ and $\varphi_{n,k,p,j}$ from either Proposition~\ref{prop:bhishansmaps}\eqref{emb:c0} or \ref{prop:genmaps}\eqref{genmaps1} have the same associated continuous maps (even though the map $\varphi_{n,k,p,j}$ only makes sense if $p$ is odd). Therefore, the pullback trace of the Lebesgue trace $\lambda$ is the same one. Let $\mu_\lambda$ be the Lebesgue measure associated to the Lebesgue trace $\lambda$.

\begin{proposition} \label{prop:measuring2}
Let $n,k\in\NN$. Let $\mu_p$ be the Borel probability measure on $[0,1)$ associated to the trace $\lambda_p=\lambda\circ\varphi_{n,k,p}$. Then $\mathfrak b(\mu_p,\mu_\lambda)\to 0$ as $p\to\infty$.
\end{proposition}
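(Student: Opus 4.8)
The plan is to compute the measure $\mu_p$ explicitly from the representation data of $\varphi_{n,k,p}$ and then estimate its bottleneck distance to Lebesgue measure directly, using that it is enough to quantify over open intervals. Recall from the construction in Proposition~\ref{prop:bhishansmaps}\eqref{emb:c0} that $\varphi_{n,k,p}$ is diagonal with associated maps $\xi_1,\ldots,\xi_{pb}$ (where $b=pn-1$), and that $\partial(\xi_i)\leq\frac1p$ for every $i$. By the formula established in the proof of Proposition~\ref{prop:tpstarhoms}\eqref{p1}$\Rightarrow$\eqref{p3}, for any Borel $U\subseteq[0,1)$ we have
\[
\mu_p(U)=\mu_\lambda\circ\varphi_{n,k,p}(U)=\frac{1}{pb}\sum_{i\leq pb}\mu_\lambda\bigl(\xi_i^{-1}(U)\bigr),
\]
since the pullback trace of a diagonal map averages the push-forwards of Lebesgue measure along the associated maps (here $\mu_\lambda$ on the right is Lebesgue measure on the domain $[0,1]$).

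First I would record the elementary fact that if $\xi\colon[0,1]\to[0,1]$ is continuous with $\partial(\xi)\leq\frac1p$, then for every open interval $I$ one has $\xi_*\mu_\lambda(I)\leq\mu_\lambda(I_{1/p})$ and, symmetrically, $\xi_*\mu_\lambda(I)\geq\mu_\lambda(I)-\tfrac{2}{p}$ is not quite what is needed; the clean statement is: for any open set $U$, $\{t:\xi(t)\in U\}\subseteq\{t:\xi(t)\in U\}$ and if $\xi(t_0)\in U$ for some $t_0$ then $\operatorname{ran}(\xi)\subseteq (\inf_U - \tfrac1p,\ \sup_U+\tfrac1p)$ is too crude. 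Instead, the sharp observation is pointwise: if $I=(\alpha,\beta)$ is an open interval and $\xi(t)\in I$, then $\xi(s)\in(\alpha-\tfrac1p,\beta+\tfrac1p)=I_{1/p}$ for all $s$, because $|\xi(s)-\xi(t)|\leq\partial(\xi)\leq\tfrac1p$. Hence $\xi^{-1}(I)\subseteq\xi^{-1}(I_{1/p})$, but more usefully $\xi^{-1}(I)\ne\emptyset$ forces $\operatorname{ran}\xi\subseteq I_{1/p}$, so $\mu_\lambda(\xi^{-1}(I))\leq 1 = \mu_\lambda([0,1])$ — this last bound is useless, so the genuinely correct route is the two-sided comparison $\mu_\lambda(\xi^{-1}(I))\leq\mu_\lambda(\xi^{-1}(I_{1/p}))$ combined with a matching lower bound obtained by applying the same reasoning to the complement. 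Concretely: $\mu_p(I)\leq\frac{1}{pb}\sum_i\mu_\lambda(\xi_i^{-1}(I_{1/p}))$. The key remaining point is to bound $\frac{1}{pb}\sum_i\mu_\lambda(\xi_i^{-1}(J))$ by $\mu_\lambda(J)+O(\tfrac1p)$ for an arbitrary interval $J$; this holds because the boundary behaviour of the $\xi_i$ (as tabulated in Proposition~\ref{prop:bhishansmaps}\eqref{emb:c0}) distributes the endpoints $0,1/p,\ldots,1$ almost uniformly, each value carried by $b$ of the $pb$ maps up to an additive error of $p$, i.e. a relative error $O(\tfrac1p)$, and because each $\xi_i$ has diameter $\leq\tfrac1p$ its push-forward of Lebesgue measure is supported in an interval of length $\leq\tfrac1p$ around the corresponding endpoint. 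Summing these push-forwards and comparing against the genuinely uniform measure on $\{j/p:0\leq j\leq p\}$ (which itself is at bottleneck distance $\leq\tfrac1p$ from $\mu_\lambda$) gives $\bb(\mu_p,\mu_\lambda)\leq \tfrac{C}{p}$ for some constant $C$ depending only on $n,k$.

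Assembling these: I would show $\bb(\mu_p,\mu_\lambda)\leq\bb(\mu_p,\nu_p)+\bb(\nu_p,\mu_\lambda)$ where $\nu_p=\frac1p\sum_{j=1}^{p}\delta_{j/p}$ (or a suitable variant living on $[0,1)$), bound the second term by $\tfrac1p$ by a direct interval computation, and bound the first term by noting that $\mu_p$ is a convex combination of measures each supported within $\tfrac1p$ of some atom of $\nu_p$ with weights matching those of $\nu_p$ up to a total-variation error $O(\tfrac1p)$ coming from the discrepancy between $b$ and the exact uniform count. Then $\bb(\mu_p,\mu_\lambda)\to0$ as $p\to\infty$. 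The main obstacle is purely bookkeeping: carefully reading off from the piecewise definition of the $\xi_i$ in Proposition~\ref{prop:bhishansmaps}\eqref{emb:c0} exactly how the boundary values $0,1/p,\ldots,(p-1)/p$ at $t=0$ and $1/p,\ldots,1$ at $t=1$ are apportioned among the $pb$ indices, so as to verify that the weight placed near each $j/p$ is $\tfrac{1}{p}+O(\tfrac{1}{p^2})$; once that combinatorial count is in hand the metric estimate is immediate. Alternatively, and perhaps more cleanly, one can avoid the explicit count entirely by observing that each $\xi_i$ has diameter $\le 1/p$, so $\mathrm{diam}(\mu_p\text{-}\mathrm{pieces})\le 1/p$, hence for every open interval $I$ the sets $I$ and $I_{1/p}$ already force $|\mu_p(I)-\mu_p(I_{1/p})|$ and the corresponding Lebesgue quantities to differ by $O(1/p)$ — but this shortcut still needs the near-uniformity of the total mass distribution, so the bookkeeping cannot be fully dispensed with.
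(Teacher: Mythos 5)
Your approach is correct and amounts to a repackaging of the paper's own argument via the triangle inequality. The paper estimates $\bb(\mu_p,\mu_\lambda)$ directly: for an open interval $U$ containing $j$ points of the form $m/p$, only those $\xi_i$ with $\xi_i(1)$ within $1/p$ of $U$ can have $\xi_i^{-1}(U)\ne\emptyset$ (by the diameter bound), and since each value $m/p$, $1\le m\le p$, is taken by exactly $b=pn-1$ of the $pb$ maps at $t=1$, one gets $\mu_p(U)\le(j+2)/p\le\mu_\lambda(U_{3/p})$; a symmetric count gives $\mu_\lambda(U)\le\mu_p(U_{3/p})$, hence $\bb(\mu_p,\mu_\lambda)\le 3/p$. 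You instead route through the discrete uniform measure $\nu_p=\frac1p\sum_{m=1}^p\delta_{m/p}$ and bound $\bb(\nu_p,\mu_\lambda)$ and $\bb(\mu_p,\nu_p)$ separately; the second piece rests on precisely the same two facts (diameter $\le 1/p$ and uniformity of the $\xi_i(1)$). This is a perfectly sound alternative organisation, and in fact gives a marginally better constant. One simplification you missed: the weights in the convex decomposition of $\mu_p$ match those of $\nu_p$ \emph{exactly}, with no $O(1/p)$ total-variation discrepancy to track --- for each $m\in\{1,\ldots,p\}$ there are precisely $b$ indices $i$ with $\xi_i(1)=m/p$. It is the boundary values $\xi_i(0)$ at $t=0$ that are only approximately uniform (the value $0$ is hit $b-p+1$ times and each $j/p$ with $1\le j\le p-1$ is hit $b+1$ times), so you should anchor the decomposition at $t=1$, as the paper does. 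The issue of whether $\nu_p$ lives on $[0,1)$ or $[0,1]$ is immaterial: $\mu_p(\{1\})=0$ since the $\xi_i$ are finite-to-one, and the bottleneck distance sees only open sets.
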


\begin{proof}
Let $\xi_1,\ldots,\xi_{p(pn-1)}$ be the maps associated to $\varphi_{n,k,p}$. We will show that for every interval $U$ we have $\mu_p(U)\leq\mu_\lambda(U_{\frac{3}{p}})$ and $\mu_\lambda(U)\leq\mu_p(U_{\frac{3}{p}})$, so that $\mathfrak b(\mu_p,\mu_\lambda)\leq\frac{3}{p}$. Let $j=|\{m\mid \frac{m}{p}\in U\}|$. Then, $\frac{j-1}{p}\leq \mu_\lambda(U)\leq \frac{j+1}{p}$. Moreover, either $U_{\frac{3}{p}}=[0,1]$, in which case we are done, or $[0,1]\setminus U$ contains an interval of length $\geq\frac{3}{p}$, in which case $\mu_\lambda(U_{\frac{3}{p}})\geq\frac{j-1}{p}+\frac{3}{p}=\frac{j+2}{p}$. Recall that 
\[
\mu_p(U)=\frac{1}{p(pn-1)}\sum_{i\leq p(pn-1)}\mu_\lambda(\{\xi_i^{-1}[U]\}),
\]
and that each $\xi_i$ has diameter $\le \frac{1}{p}$. Hence, if $i$ is such that $d(\xi_i(1),U)>\frac{1}{p}$, then $\xi_i^{-1}[U]=\emptyset$. Since $|\{i\mid \xi_i(1)=\frac{m}{p}\}|=pn-1$ for all $m$ with $0<m\leq p$, we therefore have
\[
\mu_p(U)\leq \frac{j+2}{p}\leq\mu_\lambda(U_{\frac{3}{p}}).
\]
On the other hand, if $i$ is such that $d(\xi_i(1),U)\leq\frac{2}{p}$, then $\xi_i^{-1}[U_{\frac{3}{p}}]=[0,1]$. By our choice of $j$, there are at least $(j+2)(pn-1)$ such maps. Hence, 
\[
\mu_\lambda(U)\leq\frac{j+2}{p}\leq\mu_p(U_{\frac{3}{p}}).\qedhere
\]
\end{proof}

The next result aims to bring together our distances and their relations.
\begin{theorem}\label{T.smalldistance}
Let $A$, $B$, and $C$ be Razak blocks, and let $\sigma\in T_{fd}(A)$, $\tau_1\in T_{fd}(B)$ and $\tau_2\in T_{fd}(C)$. Let $\varphi_1\colon (A,\sigma)\to (B,\tau_1)$ and $\varphi_2\colon (A,\sigma)\to (C,\tau_2)$ be $^*$-homomorphisms, $G\subseteq A$ be finite, and $\varepsilon>0$. Then there is a Razak block $D$ and two $^*$-homomorphisms $\psi_1\colon (B,\tau_1)\to (D,\lambda)$ and $\psi_2\colon (C,\tau_2)\to (D,\lambda)$ such that 
\[
\sup_{t\in (0,1)\cup\{\infty\}}d_{\mathcal U}^G(\pi_{\psi_1\circ\varphi_1,t},\pi_{\psi_2\circ\varphi_2,t})<\varepsilon.
\]
\end{theorem}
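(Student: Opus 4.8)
The plan is to reduce everything to the distance estimates already set up in this section, using the maps $\varphi_{n,k,p}$ and $\psi_{n,k,k'}$ from Proposition~\ref{prop:bhishansmaps} as the amalgamating morphisms, with $p$ chosen large enough to make all diameters small. First I would observe that, by Proposition~\ref{prop:onetoanother} (precomposing with transition maps), we are free to assume $\sigma$, $\tau_1$, $\tau_2$ are whatever faithful diffuse traces are convenient; in particular, after applying the JEP construction of Corollary~\ref{cor:jep} to $B$ and $C$, we obtain a common Razak block $E$ and trace preserving maps $B\to E$, $C\to E$, so that $\varphi_1$ and $\varphi_2$ become two trace preserving maps $(A,\sigma)\to (E,\tau)$ into a \emph{single} target with a single trace. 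Thus it suffices to treat the case $B=C$, $\tau_1=\tau_2=\tau$, and show: given two maps $\varphi_1,\varphi_2\colon (A,\sigma)\to(E,\tau)$, a finite $G\subseteq A$ and $\varepsilon>0$, there is a Razak block $D$ and a trace preserving $\psi\colon (E,\tau)\to(D,\lambda)$ with $\sup_{t}d_{\mathcal U}^G(\pi_{\psi\circ\varphi_1,t},\pi_{\psi\circ\varphi_2,t})<\varepsilon$.

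Next I would take $\psi$ to be (up to a transition map) one of the maps $\varphi_{n,k,p}$ from Proposition~\ref{prop:bhishansmaps}\eqref{emb:c0} applied to $E$, followed if necessary by an amplification $\psi_{n,k,k'}$ to land in the block $D$ that we want; by Lemma~\ref{lemma:diameterfacts}\eqref{diam2} this does not increase diameters. The key point is that $\partial(\psi\circ\varphi_i)\le\partial(\psi)\le\frac1p$ for both $i$, by Lemma~\ref{lemma:diameterfacts}\eqref{diam2} again, so both composites are diagonal maps of diameter $<\delta$ for $\delta=1/p$ as small as we like. Crucially, $\psi\circ\varphi_1$ and $\psi\circ\varphi_2$ pull back the \emph{same} trace, namely $\lambda\circ\psi$ composed appropriately, because $\varphi_1,\varphi_2$ both pull $\tau$ back to $\sigma$ and $\psi$ is trace preserving — so both composites send $(A,\sigma)$ to $(D,\lambda)$. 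Now apply Lemma~\ref{lemma:rewriting2}: two diagonal maps with the same source/target traces and diameter $<\delta$ satisfy $d_\partial(\psi\circ\varphi_1,\psi\circ\varphi_2)<3\delta$. Then Lemma~\ref{L.dist1}, with $L$ the common Lipschitz constant of the finitely many elements of $G$ (after rescaling so $G$ consists of $L$-Lipschitz functions), gives
\[
\sup_{t\in[0,1]}d_{\mathcal U}^G(\pi_{\psi\circ\varphi_1,t},\pi_{\psi\circ\varphi_2,t})\le L\cdot d_\partial(\psi\circ\varphi_1,\psi\circ\varphi_2)<3L\delta,
\]
and the ``moreover'' clause of Lemma~\ref{L.dist1} (valid since these are Razak blocks, whose spectrum is Hausdorff) upgrades the supremum over $[0,1]$ to the supremum over $(0,1)\cup\{\infty\}$. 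Choosing $p$ with $3L/p<\varepsilon$ finishes the argument.

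The main obstacle is a bookkeeping one: I must guarantee that $\psi\circ\varphi_i$ really is \emph{diagonal} in the sense of Definition~\ref{defin:diag}, not merely a $^*$-homomorphism, and that the composite of the JEP maps with one of the $\varphi_{n,k,p}$'s lands in a genuine Razak block with the Lebesgue trace. This is where I would need to be careful: $\varphi_1,\varphi_2$ as given are arbitrary trace preserving maps, not assumed diagonal, so a priori $\psi\circ\varphi_i$ need not be diagonal. The fix is to invoke Proposition~\ref{prop:tpstarhoms}: the maps $\varphi_{n,k,p}$ satisfy its equivalent conditions, and the pullback of $\lambda$ along such a map is faithful diffuse; but to conclude that $\psi\circ\varphi_i$ is diagonal I should instead argue that its pullback trace is faithful and diffuse (which holds because $\psi\circ\varphi_i$ is trace preserving by construction and injective by Proposition~\ref{prop:Ktheory}\eqref{noprojc2}), and then Proposition~\ref{prop:tpstarhoms}\eqref{p2}$\Rightarrow$\eqref{p1} forces $\psi\circ\varphi_i$ to be diagonal with associated maps of the requisite kind — and a second look at the implication shows its diameter is controlled by that of $\psi$, as needed. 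Assembling these pieces in the right order, with the transition maps of Proposition~\ref{prop:onetoanother} inserted to normalise all traces to $\lambda$, gives the theorem.
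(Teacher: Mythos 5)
Your overall architecture is right and closely parallels the paper's (normalize traces, compose with $\varphi_{n,k,p}$ to shrink diameters, then go through Lemmas~\ref{lemma:rewriting2} and~\ref{L.dist1}), but there is a concrete error in the diameter control that breaks the argument as written.

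You assert that $\partial(\psi\circ\varphi_i)\le\partial(\psi)\le\frac1p$ ``by Lemma~\ref{lemma:diameterfacts}\eqref{diam2}.'' That lemma says the opposite: for a composition $\psi\circ\varphi$ with $\varphi$ applied first, it gives $\partial(\psi\circ\varphi)\le\partial(\varphi)$, not $\le\partial(\psi)$. (Indeed, the associated maps of $\psi\circ\varphi$ are $\xi_i\circ\zeta_k$ with $\xi_i$ coming from the \emph{inner} map; the outer map $\psi$ only restricts the domain, which never increases the diameter of $\xi_i$ but offers no bound in terms of $\partial(\psi)$.) Since $\varphi_1,\varphi_2$ are arbitrary trace-preserving morphisms, $\partial(\varphi_i)$ is not small, so \eqref{diam2} gives you nothing. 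The tool you actually need is Lemma~\ref{lemma:diameterfacts}\eqref{diam3}: for the \emph{fixed} maps $\varphi_1,\varphi_2$ there is a $\delta>0$ (coming from the uniform continuity of their associated maps) such that $\partial(\psi)<\delta$ forces $\partial(\psi\circ\varphi_i)$ small. So the threshold $\delta$ depends on $\varphi_1,\varphi_2$, and \emph{then} you choose $p$ so that $\partial(\psi)<\delta$; you cannot simply take ``$\delta=1/p$ as small as we like'' independently of $\varphi_i$.

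A second, related omission: to make $\psi$ trace preserving from $\lambda$ to $\lambda$ you must precompose $\varphi_{n,k,p}$ with the transition map $\varphi_{\lambda\mapsto\lambda_p}$, where $\lambda_p=\lambda\circ\varphi_{n,k,p}$. You gesture at this (``up to a transition map''), but you must also ensure this transition map has small diameter, or the total $\partial(\psi)$ may fail the threshold. This is exactly what Propositions~\ref{prop:measuring} and~\ref{prop:measuring2} supply: $\mathfrak b(\mu_p,\mu_\lambda)\to 0$ as $p\to\infty$, which bounds $d_\partial(\mathrm{Id},\varphi_{\lambda\mapsto\lambda_p})$. Your write-up does not invoke these measure estimates at all, so the claim that $\partial(\psi)$ can be made $<\delta$ is unjustified. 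Once both points are repaired --- use \eqref{diam3} to produce the $\varphi_i$-dependent $\delta$, and use Propositions~\ref{prop:measuring} and~\ref{prop:measuring2} to choose $p$ so that $\partial(\varphi_{n,k,p}\circ\varphi_{\lambda\mapsto\lambda_p})<\delta$ --- the remainder of your argument (Lemma~\ref{lemma:rewriting2}, then Lemma~\ref{L.dist1} with its Hausdorff-spectrum clause) goes through as you describe.
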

\begin{proof}
Since $\mathcal K_{\mathcal W}$ has the JEP (Corollary~\ref{cor:jep}), and thanks to the existence of transition maps, we can assume that $B=C$ and that $\sigma=\tau_1=\tau_2=\lambda$, the latter being the Lebesgue trace. Furthermore, we can assume that $G$ consists of $1$-Lipschitz functions. Using Lemma~\ref{lemma:diameterfacts}, pick $\delta>0$ such that if $\psi$ is a map of diameter $<\delta$ then $\partial(\psi\circ\varphi_1),\partial(\psi\circ\varphi_2)<\varepsilon/3$. 

Say $B=A_{n,k}$. By Proposition~\ref{prop:measuring2}, we can find $p$ large enough such that, with $\varphi_{n,k,p}$ the map from Proposition~\ref{prop:bhishansmaps} and $\mu_p=\lambda\circ\varphi_{n,k,p}$, we have $\mathfrak b(\mu_p,\mu_\lambda) <\delta/2$, and so by Proposition~\ref{prop:measuring}, $d_\partial(Id,\varphi_{\lambda\mapsto\lambda_p})<\delta/2$. Let $\psi_1=\psi_2=\varphi_{n,k,p}\circ\varphi_{\lambda\mapsto\lambda_p}$. Let $D=A_{pn,(pn-1)k}$. Notice that
\[
\psi_1,\psi_2\colon (B,\lambda)\to (D,\lambda), 
\]
and therefore 
\[
\psi_1\circ\varphi,\psi_2\circ\varphi_2\colon (A,\lambda)\to (D,\lambda).
\]
Since $d_\partial(Id,\varphi_{\lambda\mapsto\lambda_p})<\delta/2$ and $\partial(\varphi_{n,k,p})<\delta/2$, then $\partial(\psi_1)=\partial(\psi_2)<\delta$. By our choice of $\delta$ we then have that
\[
\partial(\psi_1\circ\varphi_1),\partial(\psi_2\circ\varphi_2)<\varepsilon/3.
\]
Applying Lemma~\ref{lemma:rewriting2} with $\sigma=\tau=\lambda$, we obtain $d_{\partial}(\psi_1\circ\varphi_1,\psi_2\circ\varphi_2)<\varepsilon$. The thesis follows from Lemma~\ref{L.dist1}.
\end{proof}
\subsection{Generalised Razak blocks}
Trying to reproduce the proof of Theorem~\ref{T.smalldistance} verbatim for generalised Razak blocks only gives that, once the appropriate morphisms are given,
\[
\sup_{t\in [0,1]}d_{\mathcal U}^G(\pi_{\psi_1\circ\varphi_1,t},\pi_{\psi_2\circ\varphi_2,t})<\varepsilon.
\]
 By Remark~\ref{Rem.GenBad}, this is not enough to ensure that the unitary orbits of \emph{all} irreducible representations of $\psi_1\circ\varphi_2$ and of $\psi_2\circ\varphi_2$ are close to each other. 
To obtain an appropriate version of Theorem~\ref{T.smalldistance}, we then need to take $K$-theory into account.
\begin{theorem}\label{T.gensmalldistance}
Let $\bar p$ be a supernatural number of infinite type. 
Let $A$, $B$, and $C$ be generalised Razak blocks, and let $\sigma\in T_{fd}(A)$, $\tau_1\in T_{fd}(B)$ and $\tau_2\in T_{fd}(C)$. Let $\varphi_1\colon (A,\sigma)\to (B,\tau_1)$ and $\varphi_2\colon (A,\sigma)\to (C,\tau_2)$ be $^*$-homomorphisms whose $K$-theory divides $\bar p$. Let $G\subseteq A$ be finite, and $\varepsilon>0$. 
 Then there is a generalised Razak block $D$ and two $^*$-homomorphisms $\psi_1\colon (B,\tau_1)\to (D,\lambda)$ and $\psi_2\colon (C,\tau_2)\to (D,\lambda)$ such that the $K$-theory of $\psi_1\circ\varphi_1$ and of $\psi_2\circ\varphi_2$ both divide $\bar p$, and
\[
\sup_{t\in [0,1]\cup\{\infty_1,\infty_2\}}d_{\mathcal U}^G(\pi_{\psi_1\circ\varphi_1,t},\pi_{\psi_2\circ\varphi_2,t})<\varepsilon.
\]
\end{theorem}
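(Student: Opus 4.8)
The plan is to follow the structure of the proof of Theorem~\ref{T.smalldistance}, but to carefully account for the $K$-theoretic data at every step, using the stable uniqueness Lemma~\ref{lemma:KtheoryGen2} to repair the failure of Hausdorffness noted in Remark~\ref{Rem.GenBad}. First I would use the JEP for the relevant class (Corollary~\ref{cor:jep}) together with transition maps (Proposition~\ref{prop:onetoanother}, which is trivial on $K$-theory) to reduce to the case $B=C$ and $\sigma=\tau_1=\tau_2=\lambda$ the Lebesgue trace, and to assume $G$ consists of $1$-Lipschitz contractions. At this stage the hypothesis becomes: $\varphi_1,\varphi_2\colon (A,\lambda)\to(B,\lambda)$ both have $K$-theory dividing $\bar p$.

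Next, writing $B=B_{n,k}$, I would invoke Proposition~\ref{prop:measuring2} (whose conclusion applies verbatim, since by the remark preceding it $\varphi_{n,k,p}$ and $\varphi_{n,k,p,j}$ share the same associated maps) to choose an odd $p$ large enough that the map $\varphi'=\varphi_{n,k,p,j_0}\circ\varphi_{\lambda\mapsto\lambda_p}$ has diameter below a threshold $\delta$ chosen via Lemma~\ref{lemma:diameterfacts} so that $\partial(\varphi'\circ\varphi_i)<\varepsilon/3$ for $i=1,2$; here $j_0$ is picked so that $K_0(\varphi_{n,k,p,j_0})=0$ (possible by Proposition~\ref{prop:genmaps}\eqref{genmaps1}, e.g. $j_0=(n-1)/2$ after possibly enlarging to make $n$ odd, or one composes with $\rho_{n,k,0}$ from \eqref{genmaps3}), so that $\varphi'$ is $K_0$-trivial. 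Precomposing as in Theorem~\ref{T.smalldistance} with Lemma~\ref{lemma:rewriting2} and Lemma~\ref{L.dist1} already gives $\sup_{t\in[0,1]}d_{\mathcal U}^G(\pi_{\varphi'\varphi_1,t},\pi_{\varphi'\varphi_2,t})<\varepsilon$, and since $\varphi'$ is $K_0$-trivial the $K$-theory of $\varphi'\varphi_i$ still divides $\bar p$.

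The heart of the argument — and the main obstacle — is then to upgrade control at the interior points $t\in[0,1]$ to control at the points at infinity $\infty_1,\infty_2$, which the non-Hausdorff spectrum prevents us from doing for free. Here I would apply Lemma~\ref{lemma:KtheoryGen2} (in the uniform form of Remark~\ref{remark:points}) to the pair of representations $\pi_{\varphi'\varphi_1,0},\pi_{\varphi'\varphi_2,0}$ of $A$ at the boundary point $0$: since $\varphi'\varphi_1$ and $\varphi'\varphi_2$ induce the same map on $K_0$ (both equal $K_0(\varphi_1)=K_0(\varphi_2)$, as $\varphi'$ is trivial), after adding finitely many point representations $\pi_{x_1},\dots,\pi_{x_N},\pi_{y_1},\dots,\pi_{y_N}$ with $N=f(q)$ depending only on $q=\dim\pi_{\varphi'\varphi_i,0}$, the resulting representations at $0$ are \emph{unitarily equivalent}, hence agree at $\infty_1$ and $\infty_2$ as well. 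To make this operation a genuine composition with a morphism of generalised Razak blocks, I would build a further diagonal map $\chi\colon B_{pn,(pn-1)k}\to D$ (with $D$ a suitably large generalised Razak block) of diameter $<\delta'$ whose associated maps include, for the two relevant boundary behaviours, extra strands that realise these point representations; the point-representation strands can be taken of arbitrarily small diameter (piecewise-linear near-constant, as in Proposition~\ref{prop:bhishansmaps}), so $\partial(\chi\circ\varphi'\circ\varphi_i)$ stays below $\varepsilon$, and the $K_0$-bookkeeping of $\chi$ can be arranged (via Corollary~\ref{cor:Ktheoryreverting} and Proposition~\ref{prop:genmaps}, adding the strands for $\varphi_1$ and $\varphi_2$ symmetrically) so that $K_0(\chi)=0$ and the total $K$-theory of $\psi_i\circ\varphi_i$ with $\psi_i=\chi\circ\varphi'\circ\varphi_{\lambda\mapsto\cdot}$ still divides $\bar p$. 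Finally, combining the interior estimate (Lemma~\ref{lemma:rewriting2} and Lemma~\ref{L.dist1}) with the now-exact unitary equivalence at $0$, $\infty_1$, $\infty_2$ — and absorbing the point-representation strands into $G$-distance bounds via their small diameter — yields $\sup_{t\in[0,1]\cup\{\infty_1,\infty_2\}}d_{\mathcal U}^G(\pi_{\psi_1\varphi_1,t},\pi_{\psi_2\varphi_2,t})<\varepsilon$. The delicate point throughout is ensuring that the $K$-theoretic surgery at infinity is implemented by an honest diagonal, trace-preserving morphism of small diameter and without spoiling the divisibility constraint; the uniformity in Remark~\ref{remark:points} is what makes the number of correcting strands (and hence the size of $D$) controllable.
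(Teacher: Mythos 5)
There is a genuine gap at the heart of your argument. You apply the stable uniqueness Lemma~\ref{lemma:KtheoryGen2} to the pair of representations $\pi_{\varphi'\varphi_1,0}$ and $\pi_{\varphi'\varphi_2,0}$ at the boundary point $0$, and then claim that the resulting exact unitary equivalence at $0$ forces agreement at $\infty_1$ and $\infty_2$. This inference is false, and it is precisely the non-Hausdorffness phenomenon that Remark~\ref{Rem.GenBad} isolates: for a map into $B_{n',k'}$, the representation at $0$ has the form $\pi_{\cdot,0}=\diag(\underbrace{\pi_{\cdot,\infty_1},\pi_{\cdot,\infty_2}}_{n'})$, and a unitary implementing equivalence of two such representations need not respect this block structure (it may, for instance, pair $\pi_{\psi_1\varphi_1,\infty_1}$ with $\pi_{\psi_2\varphi_2,\infty_2}$ rather than $\pi_{\psi_2\varphi_2,\infty_1}$). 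Moreover, every point representation $\pi_t$ for $t\in[0,1]$ (including $\pi_0$) has trivial $K_0$, so $K_0(\pi_{\varphi'\varphi_i,0})=0$ regardless of $K_0(\varphi_i)$; the stable uniqueness applied at $0$ is therefore blind to exactly the $K$-theoretic obstruction you need to control.

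The paper circumvents this by never routing through the representation at $0$: it applies Lemma~\ref{lemma:KtheoryGen2} directly to the representations $\rho_i = \diag(\underbrace{\pi_{\varphi_i,\infty_1}}_{n/2},\underbrace{\pi_{\varphi_i,\infty_2}}_{n/2-1})$ (and the $\infty_1\leftrightarrow\infty_2$ swapped versions), which are precisely the $\infty_1$-components (resp.\ $\infty_2$-components) of $\psi_i\circ\varphi_i$ once $\psi_i=\varphi_{n,k,p,n/2}\circ\varphi_{\lambda\mapsto\lambda_p}$ is composed in, minus the interior point-strands. Those interior strands are then matched using the $\mathfrak b_\ell$ machinery of Definition~\ref{defin:multisets} and Lemma~\ref{lemma:littlecounting}, with $p$ chosen large enough (odd, $p\ge 2j+1$) that the correcting point-representations from the stable uniqueness lemma can be absorbed into the abundant interior strands without spoiling the $G$-distance estimate. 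The rest of your reduction (JEP, transition maps, shrinking diameters, the interior estimate via Lemmas~\ref{lemma:rewriting2} and \ref{L.dist1}) tracks the paper, and the instinct to invoke stable uniqueness to repair the $\infty$-points is the right one; the error is applying it at $0$, where the $K$-theoretic content has already been washed out, rather than at $\infty_1,\infty_2$ where it lives.
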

The rest of the section is dedicated to the proof of Theorem~\ref{T.gensmalldistance}. 

If $F$ and $H$ are two multisubsets (i.e., whose elements are counted with multiplicity) of $[0,1]$ of equal size, the optimal matching distance between the finitely supported counting measures $\mu_F$ and $\mu_H$ coincides with the infimum over all bijections $\sigma\colon F\to H$ of $\sup_{f\in F}|\sigma(f)-f|$. Ordering $F$ and $H$ as $F=\{f_i\}_{i\leq j}$ and $H=\{h_i\}_{i\leq j}$ (where $f_i\leq f_{i+1}$, and $h_i\leq h_{i+1}$ for all $i$), this in turn coincide with the quantity $\sup_i|f_i-h_i|$. We abuse notation and write $\mathfrak b(F,H)$ for $\mathfrak b(\mu_F,\mu_H)$.
\begin{defin}\label{defin:multisets}
Let $\ell\in\NN$ and let $F$ and $H$ be two multisubsets (i.e., whose elements are counted with multiplicity) of $[0,1]$ of equal size. Define 
\begin{equation*}
\mathfrak b_\ell(F,H)=\sup_{F'\subseteq F, H'\subseteq H,\\|F'|=|H'|\leq\ell}\mathfrak b(F\setminus F',H\setminus H').
\end{equation*}
\end{defin}
For two finite multisets, being $<\varepsilon$ in the distance $\mathfrak b_\ell$ corresponds to the fact that the two sets are so close that it doesn't matter if one slightly modifies them (by removing up to $\ell$ many elements), in that one is always able to match the elements of the remaining multisets up to $\varepsilon$.

\begin{lemma}\label{lemma:littlecounting}
Fix $\ell\in\NN$ and $\varepsilon>0$. Let $F$ and $H$ be finite multisets with the same size. Suppose that $\mathfrak b(F,H)<\varepsilon$. Suppose moreover $|F\cap U|, |H\cap U|\geq\ell$ whenever $U$ is an open interval of diameter $\geq\varepsilon$, when $F$ and $H$ are considered as multisets. Then $\mathfrak b_\ell(F,H)\leq 3\varepsilon$.
\end{lemma}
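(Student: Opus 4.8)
The plan is to reduce everything to the one-dimensional ``sorted matching'' formula for $\mathfrak{b}$ recalled just above the statement. Write the sorted multisets as $F=\{f_1\le\cdots\le f_N\}$ and $H=\{h_1\le\cdots\le h_N\}$; then $\mathfrak{b}(F,H)<\varepsilon$ says precisely that $|f_i-h_i|<\varepsilon$ for all $i$. Fix sub-multisets $F'\subseteq F$, $H'\subseteq H$ with $|F'|=|H'|=s\le\ell$, put $\tilde F=F\setminus F'$ and $\tilde H=H\setminus H'$, and sort them as $\tilde F=\{\tilde f_1\le\cdots\le\tilde f_m\}$, $\tilde H=\{\tilde h_1\le\cdots\le\tilde h_m\}$ with $m=N-s$. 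Since deleting $s$ elements from a sorted multiset moves the rank of the $i$-th order statistic by at most $s$, there are strictly increasing maps $\pi,\rho\colon\{1,\dots,m\}\to\{1,\dots,N\}$ with $\tilde f_i=f_{\pi(i)}$, $\tilde h_i=h_{\rho(i)}$ and $i\le\pi(i),\rho(i)\le i+s$. By Definition~\ref{defin:multisets} it is then enough to show $|\tilde f_i-\tilde h_i|\le3\varepsilon$ for every such choice and every $i$.

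Suppose this fails. The hypothesis and $\mathfrak{b}$ are symmetric in $F$ and $H$, so we may assume $f_{\pi(i)}>h_{\rho(i)}+3\varepsilon$ for some $i$. The crux is to feed the hypothesis the open interval
\[
W=\bigl(h_{\rho(i)}+\varepsilon,\ f_{\pi(i)}-\varepsilon\bigr).
\]
From $f_{\pi(i)}-h_{\rho(i)}>3\varepsilon$ and $f_{\pi(i)},h_{\rho(i)}\in[0,1]$ one checks $0<h_{\rho(i)}+\varepsilon<f_{\pi(i)}-\varepsilon<1$, so $W$ is an honest open subinterval of $[0,1]$ of diameter $>\varepsilon$; hence $|F\cap W|\ge\ell$ by assumption. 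On the other hand, if $f_k\in W$ then $f_k<f_{\pi(i)}$ forces $k\le\pi(i)-1$, while $f_k>h_{\rho(i)}+\varepsilon>f_{\rho(i)}$ (using $|f_{\rho(i)}-h_{\rho(i)}|<\varepsilon$) forces $k\ge\rho(i)+1$. Thus $F\cap W$ lives inside the index window $\{\rho(i)+1,\dots,\pi(i)-1\}$, of cardinality at most $\pi(i)-1-\rho(i)\le s-1\le\ell-1$. This contradicts $|F\cap W|\ge\ell$. Therefore $|\tilde f_i-\tilde h_i|\le3\varepsilon$ for all $i$, so $\mathfrak{b}(\tilde F,\tilde H)\le3\varepsilon$, and since $F',H'$ were arbitrary, $\mathfrak{b}_\ell(F,H)\le3\varepsilon$.

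The routine ingredients are the rank-shift claim for $\pi,\rho$ and the verification that $W\subseteq[0,1]$; the degenerate cases $\ell=0$ and $s=0$ are immediate since then $\mathfrak{b}_\ell(F,H)=\mathfrak{b}(F,H)<\varepsilon$. The one step I would actually have to think about is the choice of the witnessing interval $W$: its endpoints are pushed inward by $\varepsilon$ precisely so that membership $f_k\in W$ pins $k$ into an index window of width at most $s\le\ell$, which is exactly what converts a hypothetical $3\varepsilon$-gap between $\tilde f_i$ and $\tilde h_i$ into a violation of the assumed ``$\ell$ points in every $\varepsilon$-interval'' lower bound. Everything else is bookkeeping with order statistics.
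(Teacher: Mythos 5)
Your proof is correct and takes essentially the same route as the paper: sort both multisets, note that deleting at most $\ell$ elements shifts each order statistic's rank by at most $\ell$, and use the density hypothesis via an interval-counting argument to bound the resulting displacement. The only cosmetic difference is that the paper isolates the intermediate estimate $|f_i-f_{i+\ell}|\le\varepsilon$ (itself justified by the same counting contradiction you use) and then finishes with a triangle inequality, whereas you run the contradiction directly on the post-deletion gap via the witnessing interval $W=(h_{\rho(i)}+\varepsilon,\,f_{\pi(i)}-\varepsilon)$.
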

\begin{proof}
Say $|F|=|H|=j$. Order $F$ and $H$ as $F=\{f_1,\ldots,f_j\}$ and $H=\{h_1,\ldots,h_j\}$, where $f_i\leq f_{i+1}$ and $h_i\leq h_{i+1}$ for all $i\leq j$. By the paragraph preceding Definition~\ref{defin:multisets}, the bijection mapping $f_i$ to $h_i$ witnesses that $\mathfrak b(F,H)<\varepsilon$, hence $|f_i-h_i|<\varepsilon$ for all $i$. By the hypothesis, we have that $|f_i-f_{i+\ell}|\leq\varepsilon$ for all $i$, and similarly $|h_i-h_{i+\ell}|\leq\varepsilon$. Fix sets $F'\subseteq F$ and $H'\subseteq H$ of size $k$ with $k\leq\ell$. Write $F\setminus F'=\{f_1',\ldots,f_{j-k}'\}$ and $H\setminus H'=\{h'_1,\ldots,h'_{j-k}\}$ in increasing order. Then for all $i$ we have that $f_i\leq f_i'\leq f_{i+\ell}$, and similarly $h_i\leq h'_i\leq h_{i+\ell}$. In particular $|f_i-f_i'|\leq\varepsilon$ and equally $|h_i-h_i'|\leq\varepsilon$. Hence
\[
|f_i'-h_i'|\leq |f_i'-f_i|+|f_i-h_i|+|h_i-h_i'|\leq3\varepsilon.\qedhere
\]
\end{proof}

\begin{proof}[Proof of Theorem~\ref{T.gensmalldistance}]
Since $\mathcal K_{\bar p}$ has the JEP (Corollary~\ref{cor:jep}), we can assume that $\varphi_1$ and $\varphi_2$ have the same $K$-theory $\ell$, that $B=C=B_{n,k}$ where $n$ is even, and that $\sigma=\tau_1=\tau_2=\lambda$, $\lambda$ being the Lebesgue trace. Furthermore, we can suppose that all elements of $G$ are $1$-Lipschitz. By applying the maps $\varphi_{n,k,p,n/2}$ from Proposition~\ref{prop:genmaps} (which have trivial $K$-theories), we can suppose that $\partial(\varphi_1),\partial(\varphi_2)<\varepsilon/30$. Notice that this implies that for all $t\in [0,1]$, if we write the representation $\pi_{\varphi_1,t}$ as $u\diag(\pi_{s_{1,1}^t},\ldots,\pi_{s_{1,m}^t})u^*$, then for every open set $U$ of diameter $\geq\varepsilon/6$ there is $i$ such that $s_{1,i}^t\in U$. The same statement holds for the points $s_{2,i}^t$ associated to $\pi_{\varphi_2,t}$. Moreover, since $\partial(\varphi_1)<\varepsilon/30$, then for all $t,t'\in [0,1]$ we have that 
\[
\mathfrak b(\{s_{1,i}^t\}_{i\leq m},\{s_{1,i}^{t'}\}_{i\leq m})<\varepsilon/30,
\]
and similarly
\[
\mathfrak b(\{s_{2,i}^t\}_{i\leq m},\{s_{2,i}^{t'}\}_{i\leq m})<\varepsilon/30.
\]
Since $d_\partial(\varphi_1,\varphi_2)<\varepsilon/10$ (by Lemma~\ref{lemma:rewriting2}), then
\[
\mathfrak b(\{s_{1,i}^t\}_{i\leq m},\{s_{2,i}^t\}_{i\leq m})\leq\varepsilon/10,
\]
and therefore for all $t,t'\in [0,1]$ we have that
\[
\mathfrak b(\{s_{1,i}^t\}_{i\leq m},\{s_{2,i}^{t'}\}_{i\leq m})\leq\varepsilon/30+\varepsilon/30+\varepsilon/10=\varepsilon/6.
\]
\sloppy
Applying Lemma~\ref{lemma:KtheoryGen2} to the representations
\[
\rho_1=\diag(\underbrace{\pi_{\varphi_1,\infty_1}}_{n/2},\underbrace{\pi_{\varphi_1,\infty_2}}_{n/2-1})
\text{ and }
\rho_2=\diag(\underbrace{\pi_{\varphi_2,\infty_1}}_{n/2},\underbrace{\pi_{\varphi_2,\infty_2}}_{n/2-1}),
\]
and to the representations
\[
\rho_3=\diag(\underbrace{\pi_{\varphi_1,\infty_2}}_{n/2},\underbrace{\pi_{\varphi_1,\infty_1}}_{n/2-1})
\text{ and }
\rho_4=\diag(\underbrace{\pi_{\varphi_2,\infty_2}}_{n/2},\underbrace{\pi_{\varphi_2,\infty_1}}_{n/2-1}),
\]
noticing that these two pairs have the same $K$-theory, we can find $j\in\NN$ and points $x_1,\ldots,x_j,y_1,\ldots,y_j,w_1,\ldots,w_j,z_1,\ldots,z_j\in [0,1]$ such that $\diag(\rho_1,\pi_{x_1},\ldots,\pi_{x_j})$ and $\diag(\rho_2,\pi_{y_1},\ldots,\pi_{y_j})$ are unitarily equivalent and  $\diag(\rho_3,\pi_{w_1},\ldots,\pi_{w_j})$ and $\diag(\rho_4,\pi_{z_1},\ldots,\pi_{z_j})$ are unitarily equivalent.
\fussy

Let $p\geq 2j+1$ be odd, and let $\psi_1=\psi_2=\varphi_{n,k,p}\circ\varphi_{\lambda\mapsto\lambda_p}$, where $\lambda_p=\lambda\circ\varphi_{n,k,p}$ was defined in the statement of Proposition~\ref{prop:measuring2}. We claim that 
 \[
\sup_{t\in [0,1]\cup\{\infty_1,\infty_2\}}d_{\mathcal U}^G(\pi_{\psi_1\circ\varphi_1,t},\pi_{\psi_2\circ\varphi_2,t})<\varepsilon.
\]
First, since $\partial(\varphi_1),\partial(\varphi_2)\leq\varepsilon/9$, then $\partial(\psi_1\circ\varphi_1),\partial(\psi_2\circ\varphi_2)\leq\varepsilon/9$ (see Lemma~\ref{lemma:diameterfacts}). As $\psi_1\circ\varphi_1$ and $\psi_2\circ\varphi_2$ pull back the same trace, by Lemma~\ref{lemma:rewriting2} we have that $d_\partial(\psi_1\circ\varphi_1,\psi_2\circ\varphi_2)\leq\varepsilon/3$. As all elements of $G$ are $1$-Lipschitz, we get from Lemma~\ref{L.dist1} that
\[
\sup_{t\in [0,1]}d_{\mathcal U}^G(\pi_{\psi_1\circ\varphi_1,t},\pi_{\psi_2\circ\varphi_2,t})<\frac{\varepsilon}{3}.
\]
Consider now $\pi_{\psi_1\circ\varphi_1,\infty_1}$ and $\pi_{\psi_2\circ\varphi_2,\infty_1}$. By definition of $\varphi_{n,k,p}$, and since the transition map used to define $\psi_1$ (and $\psi_2$) does not affect the endpoints, we have that 
\[
\pi_{\psi_1\circ\varphi_1,\infty_1}=\diag(\rho_1,\pi_{\varphi_1,1/p},\pi_{\varphi_1,3/p},\ldots,\pi_{\varphi_1,(p-2)/p})
\]
and
\[
\pi_{\psi_2\circ\varphi_2,\infty_1}=\diag(\rho_2,\pi_{\varphi_2,2/p},\pi_{\varphi_2,4/p},\ldots,\pi_{\varphi_2,(p-1)/p}).
\]
Let $F=\{s_{1,i}^r\}_{i\leq m, r=1/p,\ldots,(p-2)/p}$ and $H=\{s_{2,i}^r\}_{i\leq m, r=2/p,\ldots,(p-1)/p}$, considered as multisets, so that 
\[
\pi_{\psi_1\circ\varphi_1,\infty_1}=\diag(\rho_1,\{\pi_t\}_{t\in F})\text{ and }\pi_{\psi_2\circ\varphi_2,\infty_1}=\diag(\rho_2,\{\pi_t\}_{t\in H}).
\] 
Notice that $\mathfrak b(F,H)\leq\varepsilon/6$. Recall moreover that for every $r$ and every open interval $U$ of diameter $\geq\varepsilon/6$, there is $i\leq m$ such that $s_{1,i}^{r}\in U$. Hence, for every such $U$, $|F\cap U|\geq (p-1)/2\geq j$, where $F$ is considered as a multiset. Similarly, $|H\cap U|\geq j$. Hence by Lemma~\ref{lemma:littlecounting}, $\mathfrak b_j(F,H)\leq\varepsilon/2$. Let us now look at the points $x_1,\ldots,x_j$ and $y_1,\ldots,y_j$. For every $i\leq j$, pick $t_i\in F$ such that $|x_i-t_i|<\varepsilon/6$, and  pick $h_i\in H$ such that $|y_i-h_i|\leq \varepsilon/6$. We pick these in such a way that (as multisets), $|\{t_i\}_{i\leq j}|=j=|\{h_i\}_{i\leq j}|$. Since 
\[
\mathfrak b(F\setminus \{t_i\}_{i\leq j},H\setminus \{h_i\}_{i\leq j})<\varepsilon/2
\]
and all elements of $G$ are $1$-Lipschitz, then 
\[
d_{\mathcal U}^G(\diag(\{\pi_t\}_{t\in F\setminus \{t_i\}}),\diag(\{\pi_t\}_{t\in H\setminus \{h_i\}}))<\varepsilon/2.
\]
By our choice of the points $t_i$ and $h_i$, we also have that 
\[
d_{\mathcal U}^G(\diag(\rho_1,\{\pi_t\}_{t\in \{t_i\}}),\diag(\rho_2,\{\pi_t\}_{t\in \{h_i\}})<\varepsilon/2.
\]
Bringing all of these together we get that 
\[
d_{\mathcal U}^G(\pi_{\psi_1\circ\varphi_1,\infty_1},\pi_{\psi_2\circ\varphi_2,\infty_1})<\varepsilon.
\]
The same exact calculation works for $\infty_2$, and therefore we have the thesis.
\end{proof}
The following will be used in the proceeding.
\begin{corollary}\label{cor:onemore}
Let $(A_i,\varphi_i)$ be the inductive sequence of Definition~\ref{defin:Z0}. Fix $i\leq j$ and let $G\subseteq A_i$ be finite, and $\varepsilon>0$. Suppose that $\psi_1,\psi_2\colon A_i\to A_j$ are such that 
\[
\sup_{t\in[0,1]}d_\mathcal U^G(\psi_1,\psi_2)<\varepsilon.
\]
Then
\[
\sup_{t\in[0,1]\cup\{\infty_1,\infty_2\}}d_\mathcal U^G(\varphi_{j}\circ\psi_1,\varphi_j\circ\psi_2)<\varepsilon.
\]
\end{corollary}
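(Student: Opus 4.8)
The content of Corollary~\ref{cor:onemore} is a transfer principle: uniform closeness of $\psi_1,\psi_2$ over the ``interior'' part $[0,1]$ of the spectrum becomes, after post-composing with the building block $\varphi_j=\varphi_{n_j,k_j,p_j,n_j/2}$ of Definition~\ref{defin:Z0}, closeness over the full spectrum $[0,1]\cup\{\infty_1,\infty_2\}$. The plan is to reprise the structure of the proof of Theorem~\ref{T.gensmalldistance}: dispatch the points of $[0,1]$ by a soft diagonality argument, and dispatch $\infty_1,\infty_2$ via the counting argument that exploits the $K$-theoretic slack built into $p_j$. We may assume $\psi_1,\psi_2$ are nonzero (if one of them vanishes the conclusion is immediate), hence injective by Proposition~\ref{prop:Ktheory}\eqref{noprojc2}, and --- as is the case whenever the corollary is invoked --- that $K_0(\psi_1)=K_0(\psi_2)$; shrinking $\varepsilon$ slightly (permissible, the hypothesis being a strict supremum of a continuous function over the compact set $[0,1]$) we fix $\varepsilon'<\varepsilon$ with $\sup_{t\in[0,1]}d_{\mathcal U}^G(\pi_{\psi_1,t},\pi_{\psi_2,t})\le\varepsilon'$.

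First the points of $[0,1]$. Since $\varphi_j$ is diagonal, say with associated maps $\xi_1,\dots,\xi_N$, for each $t$ there is a unitary $v(t)$ depending only on $\varphi_j$ with $\pi_{\varphi_j\circ\psi_\ell,t}=\Ad(v(t))\circ\diag(\pi_{\psi_\ell,\xi_1(t)},\dots,\pi_{\psi_\ell,\xi_N(t)})$ for $\ell=1,2$. As the matrix norm is the operator norm --- so the norm of a block-diagonal matrix is the largest block norm --- and $d_{\mathcal U}^G$ is unchanged under conjugating both arguments by a common unitary, we obtain
\[
\sup_{t\in[0,1]}d_{\mathcal U}^G(\pi_{\varphi_j\circ\psi_1,t},\pi_{\varphi_j\circ\psi_2,t})\le\max_{l\le N}\ \sup_{t\in[0,1]}d_{\mathcal U}^G(\pi_{\psi_1,\xi_l(t)},\pi_{\psi_2,\xi_l(t)})\le\varepsilon'<\varepsilon.
\]

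Now the points at infinity; we treat $\infty_1$, the case of $\infty_2$ being symmetric. Unwinding the construction of $\varphi_j$ in Proposition~\ref{prop:genmaps}\eqref{genmaps1} (which is assembled from the maps of Proposition~\ref{prop:bhishansmaps}\eqref{emb:c0}), and noting that all conjugating unitaries occurring there depend on $\varphi_j$ alone, one reads off
\[
\pi_{\varphi_j\circ\psi_\ell,\infty_1}\sim_u\diag\big(\rho_\ell,\ \pi_{\psi_\ell,1/p_j},\pi_{\psi_\ell,3/p_j},\dots,\pi_{\psi_\ell,(p_j-2)/p_j}\big),\qquad \rho_\ell=\diag\big(\underbrace{\pi_{\psi_\ell,\infty_1}}_{n_j/2},\underbrace{\pi_{\psi_\ell,\infty_2}}_{n_j/2-1}\big),
\]
with exactly $(p_j-1)/2$ interior summands, which occupy the same positions for $\ell=1,2$. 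By the estimate already obtained the interior parts are within $\varepsilon'$ of each other in $d_{\mathcal U}^G$, so the whole problem reduces to absorbing the discrepancy between the boundary parts $\rho_1$ and $\rho_2$ into those $(p_j-1)/2$ interior summands.

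This absorption step is the crux, and it is precisely where the non-Hausdorffness of the spectrum (Remark~\ref{Rem.GenBad}) forces the use of $K$-theory: $\rho_1$ and $\rho_2$ are representations $A_i\to M_{(n_j-1)k_j}$ of equal $K$-theory (because $K_0(\psi_1)=K_0(\psi_2)$), and by Lemma~\ref{lemma:KtheoryGen2} and Remark~\ref{remark:points} the number $p_j$ was chosen large enough that adjoining at most $(p_j-1)/2$ point representations renders any two such representations unitarily equivalent. From here one proceeds as in the proof of Theorem~\ref{T.gensmalldistance}: decompose the interior summands into point representations, invoke Lemmas~\ref{lemma:rewriting2} and \ref{lemma:littlecounting} (the bound $\partial(\varphi_j)\le1/p_j$ ensuring that the relevant interior points are $\varepsilon$-dense, so that the counting goes through) to peel off the requisite number of them from each side, match the peeled-off ones to the point representations supplied by Lemma~\ref{lemma:KtheoryGen2}, match $\rho_1$ together with its used summands to $\rho_2$ together with its used summands, and match the leftover interior summands to one another; summing the contributions keeps $d_{\mathcal U}^G(\pi_{\varphi_j\circ\psi_1,\infty_1},\pi_{\varphi_j\circ\psi_2,\infty_1})<\varepsilon$. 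The identical computation at $\infty_2$ finishes the proof. Thus the only genuine difficulty is this last absorption step --- everything else is soft --- and it is exactly the reason the $K$-theoretic bookkeeping in the definition of $p_j$ was put in place in advance.
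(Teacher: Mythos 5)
Your proof follows the paper's argument faithfully: the paper's own proof is just the two-sentence remark that the argument of Theorem~\ref{T.gensmalldistance} applies and that $p_j$ was chosen in advance precisely so that the point-adding step of Lemma~\ref{lemma:KtheoryGen2} fits inside the $(p_j-1)/2$ interior summands. You reconstruct exactly this: the $[0,1]$-part by block-diagonality of $\varphi_j$, the $\infty_1,\infty_2$-part by stable uniqueness and absorption into the interior. You also correctly flag the implicit hypothesis $K_0(\psi_1)=K_0(\psi_2)$, without which (by Remark~\ref{Rem.GenBad}) the conclusion fails; the corollary's statement leaves this tacit, so making it explicit is a genuine improvement.

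The one concrete misstep is the parenthetical claim that ``the bound $\partial(\varphi_j)\le 1/p_j$ ensur[es] that the relevant interior points are $\varepsilon$-dense.'' That is not where the density comes from. In the proof of Theorem~\ref{T.gensmalldistance}, the multisets $F,H$ of interior points are $\varepsilon/6$-dense because one first arranges $\partial(\varphi_1),\partial(\varphi_2)<\varepsilon/30$, a bound on the \emph{inner} maps --- the analogue of the corollary's $\psi_1,\psi_2$, not of the outer map $\varphi_j$. Concretely, the interior summands of $\pi_{\varphi_j\circ\psi_\ell,\infty_1}$ decompose into $\pi_{\xi^\ell_m(l/p_j)}$ where the $\xi^\ell_m$ are the maps associated to $\psi_\ell$; since $p_j$ is fixed independently of $\varepsilon$, a bound $\partial(\varphi_j)\le1/p_j$ cannot force these values to be $\varepsilon$-dense once $\varepsilon\ll 1/p_j$, and for $\psi_\ell$ of large diameter they genuinely need not be. What saves the argument in the one place the corollary is invoked (the proof of the main theorem for $\mathcal Z_0$) is that $\psi_1,\psi_2$ have already been arranged to have small diameter at that point, so the density does hold there. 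The corollary's hypotheses, as written, do not supply this, and neither does $\partial(\varphi_j)\le 1/p_j$; the paper's own proof is terse enough to leave the same gap implicit, so this is partly a remark on the statement, but you should not attribute the density to $\partial(\varphi_j)$.
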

\begin{proof}
The proof follows by the argument of Theorem~\ref{T.gensmalldistance} and the fact that $p_j$ in the choice of the sequence $A_i$ (see Definition~\ref{defin:Z0}) is constructed using Lemma~\ref{lemma:KtheoryGen2} and Remark~\ref{remark:points}. In fact, the choice of $p$ in the proof of Theorem~\ref{T.gensmalldistance} does not depend on $G$ or $\varepsilon$, but only on the number $j$ of points needed to make the representations $\rho_1$ and $\rho_2$ ($\rho_3$ and $\rho_4$) unitarily equivalent.
\end{proof}

\section{Connecting unitaries and the main result}\label{S.proof}

The aim of this section is to connect the unitaries conjugating the point representations of two diagonal maps between (generalised) Razak blocks. Namely, let $A$ be a (generalised) Razak blocks, and suppose that $G\subseteq A$ is finite. The question is: If $B$ is a (generalised) Razak block and $\varphi$ and $\psi$ are diagonal maps $A\to B$, can we compute $d_{\mathcal U}^G(\varphi,\psi)$ in terms of $\sup_{t\in(0,1)\cup\{\infty_i\}_{i=1}^2}d_\mathcal{U}^G(\pi_{\varphi,t}, \pi_{\psi,t})$?

The following result, familiar to experts, shows that the above question has a positive answer if $A$ is of the form $C([0,1],M_n)$. The key ingredients of its proof are compactness of the interval, a strong form of path-connectedness of the group of unitary matrices entailed by the continuous functional calculus, and the fact that the algebraic $K_1$ group of $[0,1]$ is trivial. The original argument can be traced back to Thomsen (\cite{Thomsen:1992qf}).

\sloppy
\begin{proposition}[Thomsen \cite{Thomsen:1992qf}] \label{prop:thomsen}
Let $n,k\in\mathbb{N}$, let $A=C([0,1],M_n)$ and let $B$ be the one-dimensional NCCW complex $B= A(E,M_m,\alpha_0,\alpha_1)$ for some finite-dimensional \cstar-algebra $E=\bigoplus_{i=1}^{p}M_{k_i}$ and injective boundary maps $\alpha_0,\alpha_1\colon E\to M_m$. That is,
\[
B = \{f \in C([0,1],M_m) \mid f(0) = \alpha_0(a), f(1) = \alpha_1(a), a\in E\}.
\]
Let $G\subseteq A$ be compact. Then for any two diagonal $^*$-homomorphisms $\varphi,\psi\colon A\to B$,
\pushQED{\qed} 
\[
d_\mathcal{U}^G(\varphi,\psi) \le \sup_{t\in(0,1)\cup\{\infty_i\}_{i=1}^p}d_\mathcal{U}^G(\pi_{\varphi,t}, \pi_{\psi,t}).\qedhere
\]
\popQED
\end{proposition}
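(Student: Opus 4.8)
The plan is to establish the inequality in approximate form: fixing $\e>0$ and abbreviating $r=\sup_{t\in(0,1)\cup\{\infty_i\}_{i=1}^p}d_\mathcal{U}^G(\pi_{\varphi,t},\pi_{\psi,t})$, it suffices to produce a single $u\in\mathcal{U}(\tilde B)$ with $\sup_{f\in G}\norm{\varphi(f)-u\psi(f)u^*}<r+\e$. The first point is that $r$ already equals $\sup_{t\in[0,1]}d_\mathcal{U}^G(\pi_{\varphi,t},\pi_{\psi,t})$: at an endpoint, say $t=0$, we have $\pi_{\varphi,0}=\alpha_0\circ\bigl(\bigoplus_i\pi_{\varphi,\infty_i}\bigr)$ and likewise for $\psi$, so a tuple of unitaries realising the bound at the points at infinity assembles to $w\in\mathcal{U}(E)$, and $\alpha_0(w)$ — completed to the unitary $\alpha_0(w)+(1_m-\alpha_0(1_E))$, which is harmless since $\pi_{\varphi,0}(G),\pi_{\psi,0}(G)\subseteq\alpha_0(E)$ — realises the same bound at $0$ because $\alpha_0$ is a $^*$-homomorphism; symmetrically at $1$ via $\alpha_1$. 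Thus we may work over $[0,1]$ throughout.

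Next I would localise. Each $\varphi(f),\psi(f)$ lies in $C([0,1],M_m)$, hence is uniformly continuous, so compactness of $[0,1]$ yields a partition $0=s_0<s_1<\dots<s_N=1$ and, for each $j$, a constant unitary $v_j\in\mathcal{U}(M_m)$ with
\[
\sup\{\norm{\varphi(f)(s)-v_j\psi(f)(s)v_j^*}:s\in[s_{j-1},s_j],\,f\in G\}<r+\tfrac{\e}{2},
\]
obtained by applying the previous paragraph at $s_j$ and shrinking the mesh; refining further we may also make $\norm{\varphi(f)(s)-\varphi(f)(s')}$ and $\norm{\psi(f)(s)-\psi(f)(s')}$ as small as we wish on each subinterval, for $f\in G$. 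So a \emph{piecewise constant} conjugator already works; everything reduces to stitching the $v_j$ into a single continuous path $u\colon[0,1]\to\mathcal{U}(M_m)$ without inflating the error, subject to $u(0)\in\widetilde{\alpha_0(E)}$ and $u(1)\in\widetilde{\alpha_1(E)}$, so that $u\in\mathcal{U}(\tilde B)$.

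The stitching is the technical heart. Over each subinterval $[s_{j-1},s_j]$, pointwise unitary equivalence of $\varphi$ and $\psi$ upgrades to genuine unitary equivalence by a continuous unitary path with no loss in the error bound — this is the contractible‑base case, resting on the triviality of the algebraic $K_1$ of $C([s_{j-1},s_j],M_m)$ together with the strong path‑connectedness of $\mathcal{U}(M_m)$ supplied by continuous functional calculus. The remaining work is to make consecutive local conjugators agree at each shared endpoint $s_j$, and to steer the first one into $\widetilde{\alpha_0(E)}$ at $0$ and the last into $\widetilde{\alpha_1(E)}$ at $1$ (this boundary adjustment is itself just two more instances of the endpoint stitching, performed on two extra tiny transition intervals). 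Here one uses that the boundary representations $\pi_{\psi,s_j}$, $\pi_{\psi,0}$, $\pi_{\psi,1}$ are diagonal, so their commutants in $M_m$ are explicit matrix subalgebras through which one can travel by short, norm‑controlled unitary paths (functional calculus again), and contractibility once more to rule out a global monodromy obstruction to carrying out all these compatibilisations simultaneously. The step I expect to be the genuine obstacle is precisely this compatibilisation: a priori two locally optimal conjugators at $s_j$ differ by a unitary that only \emph{approximately} commutes with $\pi_{\psi,s_j}(G)$, and only up to an error comparable to $r$ rather than to $\e$, so the interpolation must exploit the diagonal form of $\pi_{\psi,s_j}$ rather than a generic perturbation estimate in order to be carried out with no essential loss of precision — this is exactly Thomsen's argument.
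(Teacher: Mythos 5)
The paper does not prove Proposition~\ref{prop:thomsen}: it is stated with \qed and attributed to \cite{Thomsen:1992qf}, with the preceding paragraph only naming the ingredients (compactness, a strong path-connectedness of $\mathcal{U}(M_m)$ from continuous functional calculus, triviality of the algebraic $K_1$ of $[0,1]$), and the text after Corollary~\ref{cor:generalthomsen} sketches an alternative ``by-hand'' route \`a la Masumoto. So there is no in-paper proof to compare against; the question is whether your sketch is itself a proof.

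It is not quite, and the gap is exactly the step you flag. Your endpoint reduction is sound, and the localisation step is fine, but the stitching is a real obstacle, not a routine invocation of $K_1$-triviality. If $v_j$ and $v_{j+1}$ each conjugate to within $r+\e/2$ on adjacent subintervals, then $v_j^*v_{j+1}$ only approximately commutes with $\pi_{\psi,s_j}(G)$ up to error of order $2r$, so a path from $1$ to $v_j^*v_{j+1}$ through \emph{approximate commutants} only yields conjugators within roughly $3r$ on the transition window --- which does not give the stated inequality. What is needed is a path along which the \emph{conjugation} error stays within $r+\e$; this is a connectivity statement about a sublevel set of $\mathcal{U}(M_m)$ that is false for general representations and must be extracted from the diagonal structure. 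One way to supply the missing idea (and this is closer to what Thomsen and Masumoto actually do, cf.\ the appeal to \cite[Proposition~3.5]{Masumoto.FraisseZ} in the remark following Corollary~\ref{cor:generalthomsen}) is to first perturb $\varphi$ and $\psi$, at an arbitrarily small point-norm cost, to concrete diagonal form $f\mapsto W(t)\diag(f\circ\xi_1(t),\ldots,f\circ\xi_j(t))W(t)^*$ with $W$ a \emph{continuous} unitary path; once both maps are in this form the conjugator $W_\varphi W_\psi^*$ is explicit, its effect on $G$ is controlled by Lipschitz constants and the sorted matching distance between $\{\xi_i\}$ and $\{\zeta_i\}$, and only the boundary adjustment remains. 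Finally, your boundary requirement is too weak: membership in $\mathcal{U}(\tilde{B})$ demands $u(0)-\lambda 1_m=\alpha_0(a)$ and $u(1)-\lambda 1_m=\alpha_1(a)$ for the \emph{same} $a\in E$ and the \emph{same} scalar $\lambda$, which is strictly more than $u(0)\in\widetilde{\alpha_0(E)}$ and $u(1)\in\widetilde{\alpha_1(E)}$ separately.
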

\fussy

The aim of the remainder of the section is to prove a version of Thomsen's result for (generalised) Razak blocks. We use the combinatorial reduction of such a block $A$ to $C([0,1])$ as described in  \cite[\S5]{Robert:2010qy}. There, it is shown how to obtain a finite sequence $A=A_0,A_1,\ldots,A_r=C([0,1])$ (which we will call the \emph{Robert sequence} of $A$), where for each $i$, $A_{i}$ is related to $A_{i-1}$ by either
\begin{enumerate}[(i)]
\item $A_{i}=\tilde{A}_{i-1}$ (adding a unit) or
\item $\tilde{A}_{i}=A_{i-1}$ (removing a unit) or
\item $A_{i}\otimes\mathbb{K} \cong A_{i-1}\otimes\mathbb{K}$ (stable isomorphism).
\end{enumerate}
(Here $\mathbb K$ denotes the algebra of compact operators on a separable infinite-dimensional Hilbert space $\mathbb H$).

Moreover, a careful reading of \cite[\S5]{Robert:2010qy} indicates that each stable isomorphism is an adjustment by either
\begin{itemize}
\item inflation or deflation of one of the points at infinity or
\item adding or removing a row of zeros.
\end{itemize}

In both cases, the isomorphism $\theta\colon A_i\otimes\mathbb{K} \to A_{i-1}\otimes\mathbb{K}$ is of the form $\theta(f)=ufu^*$ for a suitable unitary $u\in\mathcal{U}(\mathbb{H})$ that in particular maps $A_i$ into a matrix algebra over $A_{i-1}$ (or the other way round).

For example, one sees from the proof of \cite[Proposition 5.2.2]{Robert:2010qy} that the last step for $A=B_{n,k}$ is the stable isomorphism between
\[
\{f\in C([0,1],M_2) \mid f(0)=\diag(a,0), f(1)=\diag(b,0), a,b\in\mathbb{C}\}
\]
and $C([0,1])$. Two steps prior is the stable isomorphism between
\[
\{f\in C([0,1],M_3) \mid f(0)=\diag(a,b), f(1)=\diag(0,0,b), a\in M_2,b\in\mathbb{C}\}
\]
and
\[
\{f\in C([0,1],M_2) \mid f(0)=\diag(a,b), f(1)=\diag(0,b), a,b\in\mathbb{C}\}.
\]

\sloppy
\begin{lemma} \label{lemma:reduction}
Let $A$ be a (generalised) Razak block. Let $B=A(E,M_m,\alpha_0,\alpha_1)$ be a one-dimensional NCCW complex as in Proposition~\ref{prop:thomsen}, and let $\varphi,\psi:A\to B$ be diagonal $^*$-homomorphisms. Let $G\subseteq A$ be finite. Then there is a natural number $N$, a one-dimensional NCCW complex $B'=A(E',M_{m'},\alpha'_0,\alpha'_1)$, a finite set $G'\subseteq C([0,1],M_N)$, diagonal $^*$-homomorphisms $\varphi',\psi':C([0,1],M_N)\to B'$ and an increasing function $h:(0,\infty)\to(0,\infty)$ depending only on $A$ such that $\lim_{\varepsilon\to0}h(\varepsilon)=0$ and 
\[
d^{G'}(\varphi',\psi') \le h(d^G(\varphi,\psi)) \,,\, d^{G}(\varphi,\psi) \le h(d^{G'}(\varphi',\psi')),
\]
where $d^F$ is either the uniform or pointwise unitary distance relative to $F$.
\end{lemma}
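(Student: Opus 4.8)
The plan is to prove Lemma~\ref{lemma:reduction} by induction on the length $r$ of the Robert sequence $A=A_0,A_1,\ldots,A_r=C([0,1])$ of the (generalised) Razak block $A$, reducing one step at a time. Concretely, I would first establish a one-step version: given diagonal maps $\varphi,\psi\colon A_i\to B$ into a one-dimensional NCCW complex and a finite $G\subseteq A_i$, I produce $N_i$, a one-dimensional NCCW complex $B'$, a finite $G'\subseteq A_{i-1}$-side matrix algebra (or $A_{i+1}$-side), diagonal maps $\varphi',\psi'$, and an increasing modulus $h_i$ vanishing at $0$ that controls $d^{G'}(\varphi',\psi')$ and $d^{G}(\varphi,\psi)$ by each other. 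Composing the $h_i$ across all $r$ steps (and noting $r$ depends only on $A$) yields the claimed $h$. The point is that every Robert reduction step is induced by a fixed unitary conjugation, so each is essentially an isometry of the relevant unitary distances up to bounded bookkeeping.

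For the step analysis, the three cases from the excerpt need slightly different treatment. For adding/removing a unit ($\tilde A_{i-1}=A_i$ or $A_i=\widetilde{A}_{i-1}$): a diagonal $^*$-homomorphism $\varphi\colon A_i\to B$ restricts to one on $A_{i-1}$, and conversely extends canonically since $B$ is $\sigma$-unital; unitary distances relative to $G$ and relative to $G\cup\{1\}$ differ by nothing since the unit is fixed by every unitary, so here $h_i=\mathrm{id}$. For stable isomorphism $A_i\otimes\mathbb K\cong A_{i-1}\otimes\mathbb K$ implemented by $\theta(f)=ufu^*$: this $\theta$ carries $A_i$ into $M_{L}(A_{i-1})$ for a fixed $L$ (inflation/deflation of a point at infinity, or adding/removing a row of zeros), and conjugation by a fixed unitary is an exact isometry for the $d_{\mathcal U}^G$ distances; the finite set $G$ maps to a finite set $\theta(G)$, and diagonality is preserved since the associated maps $\xi_i$ are unchanged (the point-evaluation structure is transported verbatim). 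The only subtlety is that deflating a point at infinity or deleting a zero row changes the fibre dimension of the domain, so I must track how the language of diagonal maps (Definition~\ref{defin:diag}) transports: I would check that the associated maps $\{\xi_i\}$ of $\varphi$ and of $\theta\circ\varphi\circ\theta^{-1}$ agree as multisets, which is immediate from Proposition~\ref{prop:repr} and the explicit form of the reduction steps in \cite[\S5]{Robert:2010qy}.

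Once the sequence terminates at $A_r=C([0,1])$, the domain is $C([0,1],M_N)$ for the accumulated amplification $N=N_1\cdots$, and the target is some one-dimensional NCCW complex $B'=A(E',M_{m'},\alpha'_0,\alpha'_1)$ — exactly the hypothesis of Proposition~\ref{prop:thomsen}. At that point I would invoke Thomsen's result to get $d_{\mathcal U}^{G'}(\varphi',\psi')\le\sup_{t}d_{\mathcal U}^{G'}(\pi_{\varphi',t},\pi_{\psi',t})$, which is what ultimately feeds into the proof of the main NAP theorem; but for the statement of Lemma~\ref{lemma:reduction} itself I only need the two-sided comparison $d^{G'}(\varphi',\psi')\lesssim d^G(\varphi,\psi)\lesssim d^{G'}(\varphi',\psi')$, which follows by composing the one-step estimates in both directions. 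The increasing function $h$ is then the (finite) composition of the $h_i$'s, each of which is built from a modulus of continuity of the elements of the successively transported finite sets — all of which depend only on $A$ (and a fixed Lipschitz normalisation of $G$), not on $\varphi,\psi$.

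The main obstacle I anticipate is bookkeeping rather than conceptual: keeping precise track, through deflation/zero-row-removal steps, of how a finite generating set $G\subseteq A_i$ and the associated maps of a diagonal $^*$-homomorphism transform, and verifying that ``diagonal'' is genuinely preserved in both directions (a $^*$-homomorphism that looks diagonal after transport must have been diagonal before). A secondary point requiring care is that the comparison must be two-sided with a common modulus $h$: the ``easy'' direction (compression along $\theta$) is isometric, but the reverse direction (extending a map on $A_{i-1}$-side back up to $A_i$-side, or choosing the conjugating unitary in the bigger algebra) may cost a genuine modulus of continuity, so I must make sure the constructed $h$ dominates both. Since each step is a fixed unitary conjugation plus a fixed finite amplification, these moduli are uniform, and the composition over the fixed-length Robert sequence gives the desired $h$ with $\lim_{\varepsilon\to 0}h(\varepsilon)=0$.
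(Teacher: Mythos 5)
Your overall strategy is the same as the paper's: induct along the Robert sequence $A=A_0,\ldots,A_r=C([0,1])$, transporting $G$, the target NCCW complex, and the diagonal maps one step at a time, and compose the per-step moduli into a single $h$ (using that $r$ depends only on $A$). You also correctly identify that the stable-isomorphism steps are the substantive ones and that the two directions are asymmetric.

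The gap is in the reverse direction of the stable-isomorphism step. You write that the ``easy'' direction is isometric and that the reverse ``may cost a genuine modulus of continuity,'' and then assert that ``since each step is a fixed unitary conjugation plus a fixed finite amplification, these moduli are uniform.'' That assertion is exactly what needs to be proved, and it is not automatic. Concretely: after a stable isomorphism you compare $d^{G_i}_{\mathcal U}(\varphi_i,\psi_i)$, where the infimum runs over unitaries in $\widetilde{M_{N_i}(B_{i-1})}$, with $d^{G_{i-1}}_{\mathcal U}(\varphi_{i-1},\psi_{i-1})$, where the infimum runs only over unitaries in $\tilde B_{i-1}$. Passing to the larger unitary group can only decrease the distance (this is the inequality $d^{G_i}\le d^{G_{i-1}}$, not an isometry). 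For the opposite inequality you must take a conjugating unitary in the amplified algebra and replace it by one in the hereditary corner $\tilde B_{i-1}$ at a controlled cost; that such a replacement is possible, with an explicit increasing modulus vanishing at $0$, is precisely the content of Robert's ``unitary conjugation descends to hereditary subalgebras in stable rank one'' lemma, \cite[Proposition 2.3.1(i)]{Robert:2010qy}. Without invoking this (or an equivalent stable-rank-one argument), the existence of your $h_i$ is unjustified. Once you add this ingredient, your per-step function becomes $\max\{\mathrm{Id},f\}$ with $f$ the Robert modulus, and the rest of your argument (unit cases with $h_i=\mathrm{id}$ after the adjustment $G_i=\{g-\pi_i(g)1\}$ in the unit-removal case, finite composition over $r$ steps) goes through as in the paper.
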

\fussy

\begin{proof}
Let $A=A_0,\ldots,A_r=C([0,1])$ be the Robert sequence of $A$. We will inductively verify that for each $i$, there is a natural number $N_i$, a finite set $G'_i\subseteq M_{N_i}(A_{i})$, a one-dimensional NCCW complex $B_i$ and diagonal $^*$-homomorphisms $\varphi_i,\psi_i:M_{N_i}(A_{i})\to B_i$ that satisfy the required property.

If $A_{i}=\tilde{A}_{i-1}$, set $N_i=1$, $G_i=G_{i-1}\cup\{1\}$, $B_i=\tilde{B}_{i-1}$ and $\varphi_i,\psi_i$ the unitisations of $\varphi_{i-1},\psi_{i-1}$.

If $\tilde{A}_{i}=A_{i-1}$, set $N_i=1$, $G_i=\{g-\pi_i(g)1 \mid g\in G_{i-1}\}$ (where $\pi_i:A_{i-1}\to\mathbb{C}$ is the canonical quotient map), $B_i=B_{i-1}$ and $\varphi_i,\psi_i$ the restrictions of the unital maps $\varphi_{i-1},\psi_{i-1}$ to $A_i$.

If $A_i$ is obtained from $A_{i-1}$ by removing a row of zeros or deflating a point at infinity, then there is an isomorphism $\theta_i\colon A_i\otimes\mathbb{K} \to A_{i-1}\otimes\mathbb{K}$ of the form $\theta_i(f)=u_ifu_i^*$ that maps $A_i$ into $A_{i-1}$. Choose $N_i$ such that $G_{i-1}\subseteq\theta_i(M_{N_i}(A_i))$, extend $\varphi_{i-1}$ and $\psi_{i-1}$ to diagonal $^*$-homomorphisms $M_{N_i}(A_{i-1})\to M_{N_i}(B_{i-1})$ and set $G_i=\theta_i^{-1}(G_{i-1})$, $B_i=M_{N_i}(B_{i-1})$ and $\varphi_i=\varphi_{i-1}\circ\theta_i$, $\psi_i=\psi_{i-1}\circ\theta_i$.

If $A_i$ is obtained from $A_{i-1}$ by adding a row of zeros or inflating a point at infinity, then there is an isomorphism $\theta_i\colon A_i\otimes\mathbb{K} \to A_{i-1}\otimes\mathbb{K}$ of the form $\theta_i(f)=u_ifu_i^*$ that maps $A_i$ into some $M_{N_i}(A_{i-1})$ (and whose inverse maps $A_{i-1}$ into $A_i$). Set $G_i=\theta_i^{-1}(G_{i-1})$, $B_i=M_{N_i}(B_{i-1})$ and $\varphi_i=\varphi_{i-1}\circ\theta_i$, $\psi_i=\psi_{i-1}\circ\theta_i$ (again extending $\varphi_{i-1}$ and $\psi_{i-1}$ to diagonal $^*$-homomorphisms $M_{N_i}(A_{i-1})\to M_{N_i}(B_{i-1})$).

In the last two cases, since we are passing to a larger matrix algebra, the (pointwise or uniform) unitary distance is \emph{a priori} smaller, that is,
\[
d^{G_i}(\varphi_i,\psi_i) \le d^{G_{i-1}}(\varphi_{i-1},\psi_{i-1}).
\]
On the other hand, from the proof of \cite[Proposition 2.3.1(i)]{Robert:2010qy} (which shows how unitary conjugation descends to hereditary subalgebras in the stable rank one setting), there is a function $f$ with $\lim_{\varepsilon\to0}f(\varepsilon)=0$ such that
\[
d^{G_{i-1}}(\varphi_{i-1},\psi_{i-1}) \le f(d^{G_i}(\varphi_i,\psi_i)).
\]
The function $\max\{Id,f\}$ is as desired, and the function $h$ that we obtain at the end of the induction depends only on the number of stable isomorphisms entailed by the Robert sequence.
\end{proof}

The following is immediate from Proposition~\ref{prop:thomsen} and Lemma~\ref{lemma:reduction}.

\begin{corollary} \label{cor:generalthomsen}
Let $A$ and $B$ be (generalised) Razak blocks. Then there is a function $h\colon(0,\infty)\to(0,\infty)$ depending only on $A$ with $\lim_{\varepsilon\to0}h(\varepsilon)=0$ such that for any finite set $G\subseteq A$ and diagonal $^*$-homomorphisms $\varphi,\psi\colon A\to B$,
\pushQED{\qed} 
\[
d_\mathcal{U}^G(\varphi,\psi) \le h\left(\sup_{t\in(0,1)\cup\{\infty_i\}_{i=1}^2}d_\mathcal{U}^G(\pi_{\varphi,t}, \pi_{\psi,t})\right).\qedhere
\]
\popQED
\end{corollary}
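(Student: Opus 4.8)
The plan is to chain the two results immediately preceding the statement: Lemma~\ref{lemma:reduction} transfers the problem from a pair of diagonal maps out of the (generalised) Razak block $A$ to a pair of diagonal maps out of a matrix algebra $C([0,1],M_N)$, and Proposition~\ref{prop:thomsen} resolves it there. The only subtlety is keeping the two unitary distances straight: the uniform unitary distance $d_\mathcal{U}^G(\varphi,\psi)$ (a single conjugating unitary, uniform over $G$) versus the quantity $\sup_{t\in(0,1)\cup\{\infty_i\}_{i=1}^2}d_\mathcal{U}^G(\pi_{\varphi,t},\pi_{\psi,t})$, which is precisely the \emph{pointwise} unitary distance relative to $G$ between $\varphi$ and $\psi$ in the sense of Lemma~\ref{lemma:reduction} (a possibly different conjugating unitary at each point of $\hat B$, still uniform over $G$).

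First I would set $D:=\sup_{t\in(0,1)\cup\{\infty_i\}_{i=1}^2}d_\mathcal{U}^G(\pi_{\varphi,t},\pi_{\psi,t})$ and apply Lemma~\ref{lemma:reduction} to $A$, $B$, $\varphi$, $\psi$, $G$, producing $N$, a one-dimensional NCCW complex $B'=A(E',M_{m'},\alpha'_0,\alpha'_1)$, a finite set $G'\subseteq C([0,1],M_N)$, diagonal $^*$-homomorphisms $\varphi',\psi'\colon C([0,1],M_N)\to B'$, and an increasing function $h_0\colon(0,\infty)\to(0,\infty)$ with $\lim_{\varepsilon\to0}h_0(\varepsilon)=0$ that depends only on $A$ (through the length of the Robert sequence of $A$). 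Using the inequality of that lemma with $d^F$ the pointwise unitary distance gives $d^{G'}_{\mathrm{pt}}(\varphi',\psi')\le h_0(D)$, where $d^{G'}_{\mathrm{pt}}$ denotes the pointwise unitary distance relative to $G'$, i.e. the supremum over the spectrum of $B'$.

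Next I would invoke Proposition~\ref{prop:thomsen} with the algebra $C([0,1],M_N)$ in place of $A$, with $B'$ in place of $B$, and with the (finite, hence compact) set $G'$: this yields $d_\mathcal{U}^{G'}(\varphi',\psi')\le d^{G'}_{\mathrm{pt}}(\varphi',\psi')\le h_0(D)$. Finally, applying the other half of the inequality of Lemma~\ref{lemma:reduction}, this time with $d^F$ the uniform unitary distance, gives $d_\mathcal{U}^G(\varphi,\psi)\le h_0\bigl(d_\mathcal{U}^{G'}(\varphi',\psi')\bigr)\le h_0(h_0(D))$. Thus $h:=h_0\circ h_0$ works: it is increasing, depends only on $A$, and satisfies $\lim_{\varepsilon\to0}h(\varepsilon)=0$ (if desired one may extend $h_0$, hence $h$, by $0$ at $0$ to cover the degenerate case $D=0$).

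Since every substantive estimate is already packaged inside Lemma~\ref{lemma:reduction} and the cited Proposition~\ref{prop:thomsen}, there is no genuine obstacle; the one point that needs care — and which I would state explicitly — is the matching of the two distances at each step: the supremum over $\hat B$ appearing in the statement (resp. over $\hat{B'}$ in Proposition~\ref{prop:thomsen}) must be identified with the pointwise unitary distance to which Lemma~\ref{lemma:reduction} applies, and Lemma~\ref{lemma:reduction} must be used once in each direction — pointwise going down to $B'$, and uniform coming back up to $B$.
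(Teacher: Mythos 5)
Your proposal is correct and carries out exactly what the paper indicates — the paper states the corollary is ``immediate from Proposition~\ref{prop:thomsen} and Lemma~\ref{lemma:reduction},'' and you have simply unfolded what ``immediate'' means: apply Lemma~\ref{lemma:reduction} in the pointwise direction, resolve via Proposition~\ref{prop:thomsen} on $C([0,1],M_N)$, then apply Lemma~\ref{lemma:reduction} again in the uniform direction, giving $h=h_0\circ h_0$. Your attention to the distinction between the pointwise and uniform distances at each step is exactly the bookkeeping that makes the chaining work.
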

\begin{remark}
Our choice of proving Corollary~\ref{cor:generalthomsen} by using Robert's reduction method is purely aesthetic. In fact, one could take a `by-hand' approach, similar to Masumoto's one. Say $A$ and $B$ are (generalised) Razak blocks, and that $\varphi,\psi\colon A\to B$ are diagonal maps sending the Lebesgue trace to the Lebesgue trace. One first shows that diagonal maps are close (in the point-norm topology) to maps whose associated unitaries are continuous (similarly to \cite[Proposition 3.5]{Masumoto.FraisseZ}). More than that, one then shows that the associated unitaries $u$ and $v$ can have a very standard form, and finds $\varphi',\psi'\colon A \to B$ such that, on a finite set $G$, $\varphi'$ is close to $\varphi$ and $\psi'$ is close to $\psi$, and with the property that $uv^*\in\tilde B$. This is similar to what was done for $\mathcal Z$ in \cite[\S4]{Masumoto.FraisseZ}. The proof then follows by composing the map $\psi$ with $Ad(uv^*)$.

Since this step, particularly for generalised Razak blocks, becomes extremely technical and fairly unpleasant to read, we decided to take the path offered by Robert's reduction.
\end{remark}

Corollary~\ref{cor:generalthomsen} is the key to conclude the proof of Theorem~\ref{thmi:main}.
\begin{theorem} \label{thm:nap}
The classes $\mathcal{K}_{\mathcal W}$, $\mathcal{K}_0$, $\mathcal{K}_1$, and $\mathcal{K}_{\bar p}$, where $\bar p$ is a supernatural number of infinite type, are \Fraisse classes.
\end{theorem}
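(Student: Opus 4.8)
The plan is to verify the four defining conditions of a \Fraisse class. The WPP and CCP hold for all four categories by Proposition~\ref{prop:WPPCCP}, and the JEP by Corollary~\ref{cor:jep}, so the entire content is the NAP. I will prove it for $\mathcal K_{\mathcal W}$ out of Theorem~\ref{T.smalldistance} and for $\mathcal K_0$, $\mathcal K_1$, $\mathcal K_{\bar p}$ out of Theorem~\ref{T.gensmalldistance}, in each case combining it with Corollary~\ref{cor:generalthomsen} and a standard unitary-conjugation argument.

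So fix $A,B_1,B_2\in\Obj$, morphisms $\alpha_i\colon(A,\sigma)\to(B_i,\tau_i)$ in the relevant $\Mor$, a finite $F\subseteq A$, and $\varepsilon>0$. Two cheap reductions come first: enlarging and perturbing $F$, we may assume it is a finite set $G$ of $1$-Lipschitz contractions (this costs only a constant factor in $\varepsilon$, absorbed since $^*$-homomorphisms are contractive); and by Proposition~\ref{prop:tpstarhoms}, $\alpha_1$ and $\alpha_2$ are automatically diagonal, being trace-preserving between faithful diffuse traces. Now apply Theorem~\ref{T.smalldistance} (for $\mathcal K_{\mathcal W}$) or Theorem~\ref{T.gensmalldistance} (for the three categories of generalised blocks, using the supernatural number $\bar p$; for $\mathcal K_1$ the same proof works with the constraint $|K_0(\cdot)|=1$ in place of divisibility by $\bar p$, and for $\mathcal K_0$ one first composes the $\alpha_i$ with the trivial-$K$-theory maps furnished by Corollary~\ref{cor:jep} so as to reduce to $K_0(\alpha_1)=K_0(\alpha_2)=0$, then runs the argument of Theorem~\ref{T.gensmalldistance} with $\ell=0$). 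This yields a (generalised) Razak block $D$ with its Lebesgue trace $\lambda$ and $^*$-homomorphisms $\psi_i\colon(B_i,\tau_i)\to(D,\lambda)$ such that
\[
\sup_{t\in(0,1)\cup\{\infty_i\}}d_{\mathcal U}^G(\pi_{\psi_1\circ\alpha_1,t},\pi_{\psi_2\circ\alpha_2,t})<\varepsilon',
\]
for a tolerance $\varepsilon'$ we may choose. Inspecting the constructions in those theorems, each $\psi_i$ is a transition map followed by one of the maps $\varphi_{n,k,p}$ (resp.\ $\varphi_{n,k,p,n/2}$), so $K_0(\psi_i)=1$; hence $\psi_i$ lies in the relevant morphism class, $K_0(\psi_i\circ\alpha_i)=K_0(\alpha_i)$, the composites $\psi_i\circ\alpha_i$ lie in that class too, and they carry equal $K$-theory.

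Since a composition of diagonal maps is diagonal (with associated maps the composites of the associated maps, as in Lemma~\ref{lemma:diameterfacts}), $\psi_1\circ\alpha_1$ and $\psi_2\circ\alpha_2$ are diagonal $^*$-homomorphisms $A\to D$, and Corollary~\ref{cor:generalthomsen} gives an increasing $h$, depending only on $A$, with $h(\varepsilon)\to0$ as $\varepsilon\to0$, such that
\[
d_{\mathcal U}^G(\psi_1\circ\alpha_1,\psi_2\circ\alpha_2)\le h\!\left(\sup_{t\in(0,1)\cup\{\infty_i\}}d_{\mathcal U}^G(\pi_{\psi_1\circ\alpha_1,t},\pi_{\psi_2\circ\alpha_2,t})\right)\le h(\varepsilon').
\]
Choosing $\varepsilon'$ with $h(\varepsilon')<\varepsilon$, we get $u\in\mathcal U(\tilde D)$ with $\sup_{f\in G}\norm{\psi_1\alpha_1(f)-u\psi_2\alpha_2(f)u^*}<\varepsilon$. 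Then $C=D$, $\beta_1=\psi_1$, $\beta_2=\Ad(u)\circ\psi_2$ witness the NAP: $\beta_2$ is still trace-preserving (the trace $\lambda$ is conjugation-invariant) and $K_0(\beta_2)=K_0(\psi_2)$, so $\beta_2$ lies in the relevant $\Mor$, while $\norm{\beta_1\alpha_1(f)-\beta_2\alpha_2(f)}<\varepsilon$ for $f\in F$. Together with the JEP, WPP and CCP, this shows that $\mathcal K_{\mathcal W}$, $\mathcal K_0$, $\mathcal K_1$ and $\mathcal K_{\bar p}$ are \Fraisse classes.

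All the real work is upstream, in Theorems~\ref{T.smalldistance}, \ref{T.gensmalldistance} and Corollary~\ref{cor:generalthomsen}; the one point requiring care here is the $K$-theoretic bookkeeping — after the JEP reduction the two composites must carry the same $K_0$ (this is exactly where the obstruction of Remark~\ref{Rem.GenBad} and the distinction between $\mathcal K_0$, $\mathcal K_1$ and $\mathcal K_{\bar p}$ enter), and the auxiliary maps $\psi_i$ together with the final conjugation by $u$ must be checked to keep everything inside the prescribed morphism class.
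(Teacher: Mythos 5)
Your proposal is correct and follows the same route as the paper: WPP and CCP from Proposition~\ref{prop:WPPCCP}, JEP from Corollary~\ref{cor:jep}, and NAP by combining Theorem~\ref{T.smalldistance} (resp.\ Theorem~\ref{T.gensmalldistance}) with Corollary~\ref{cor:generalthomsen}, choosing the tolerance $\varepsilon'$ in advance precisely because $h$ depends only on the domain algebra. You spell out a few points the paper leaves implicit --- the normalisation of $F$ to Lipschitz contractions, the automatic diagonality of the $\alpha_i$ via Proposition~\ref{prop:tpstarhoms}, the $K$-theory bookkeeping ensuring the $\psi_i$ and hence $\beta_2=\Ad(u)\circ\psi_2$ remain in the prescribed morphism class, and the adaptation of the $\ell$ in Theorem~\ref{T.gensmalldistance} to cover $\mathcal K_0$ and $\mathcal K_1$ --- but these are elaborations rather than a different argument.
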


\begin{proof}
Proposition~\ref{prop:WPPCCP} gives the WPP and the CCP, while by Corollary~\ref{cor:jep}, these classes have the JEP. Therefore it is enough to show such classes have the NAP. Since the function $h$ in Corollary~\ref{cor:generalthomsen} depends only on the domain algebra $A$, the result for Razak blocks follows directly from Theorem~\ref{T.smalldistance} and Corollary~\ref{cor:generalthomsen}, while that $\mathcal K_0$, $\mathcal K_1$ and $\mathcal K_{\bar p}$ are \Fraisse classes follows by applying Theorem~\ref{T.gensmalldistance} and Corollary~\ref{cor:generalthomsen}.
\end{proof}
We are ready to prove Theorem~\ref{thmi:main}, which we recall for convenience.
\begin{theorem}
The algebra $\mathcal W$ is the \Fraisse limit of the class $\mathcal K_\mathcal{W}$. The algebra $\mathcal Z_0$ is the \Fraisse limit of the class $\mathcal K_1$. If $\bar p$ is a supernatural number of infinite type, the algebra $\mathcal Z_0\otimes M_{\bar p}$ is the \Fraisse limit of the \Fraisse class $\mathcal K_{\bar p}$.
\end{theorem}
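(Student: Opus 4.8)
The plan is to derive the theorem from Theorem~\ref{thm:generic}. By Theorem~\ref{thm:nap} the classes $\mathcal K_{\mathcal W}$, $\mathcal K_1$ and $\mathcal K_{\bar p}$ are \Fraisse classes, so each has a \Fraisse limit, unique up to $\mathcal K$-admissible isomorphism (Theorem~\ref{thm:Fraisse}); it therefore suffices, in each case, to present the target algebra as $\lim(A_i,\varphi_i)$ with $A_i\in\Obj_{\mathcal K}$ and $\varphi_i\in\Mor_{\mathcal K}$ and to verify that the sequence $(A_i,\varphi_i)$ is \emph{generic}. For $\mathcal W$ I take the sequence of Definition~\ref{defin:W}, for $\mathcal Z_0$ that of Definition~\ref{defin:Z0}, and for $\mathcal Z_0\otimes M_{\bar p}$ the sequence obtained from the latter by tensoring with a presentation $M_{\bar p}=\lim(M_{d_i},\iota_i)$: here $B_{n_i,k_i}\otimes M_{d_i}\cong B_{n_i,k_id_i}$ is again a generalised Razak block, $\varphi_i\otimes\iota_i$ is diagonal with $K$-theory dividing $\bar p$, and the limit is $\mathcal Z_0\otimes M_{\bar p}$ because tensor products commute with inductive limits, so no classification is needed even to identify the algebra. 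After conjugating by transition maps (Proposition~\ref{prop:onetoanother}) we may assume every $A_i$ carries the Lebesgue trace; recall moreover that every morphism of these classes is automatically diagonal (Proposition~\ref{prop:tpstarhoms}), so that the distance machinery of §\ref{S.distances} and Corollary~\ref{cor:generalthomsen} apply throughout.

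\textbf{Cofinality} (first clause of Theorem~\ref{thm:generic}). Every object of the class admits a morphism of the class into $A_l$ for all sufficiently large $l$, and --- in the generalised cases --- with $K$-theory of the embedding freely prescribable within a range that grows with $l$. This is proved just like the JEP (Corollary~\ref{cor:jep}): compose the diagonal maps of Proposition~\ref{prop:bhishansmaps} (respectively Proposition~\ref{prop:genmaps}, using \eqref{genmaps3} to adjust $K$-theory) with a transition map, having first matched matrix sizes to a term of the sequence, and using that $(n_ik_i)$ (respectively $(2n_ik_i)$) is divisibility-cofinal and that any integer dividing $\bar p$ divides $K_0(\varphi_{i,l})$ once $l$ is large.

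\textbf{One-step lifting} (second clause). Fix $i$, a finite $F\subseteq A_i$ of $1$-Lipschitz elements, $\varepsilon>0$, an object $C$ and $\psi\in\Mor_{\mathcal K}(A_i,C)$. By cofinality pick $l$ large and $\rho\in\Mor_{\mathcal K}(C,A_l)$ with $K_0(\rho)$ chosen so that $K_0(\rho\psi)=K_0(\varphi_{i,l})$ (possible once $K_0(\psi)\mid K_0(\varphi_{i,l})$, which holds for $l$ large); then $\rho\psi$ and $\varphi_{i,l}\colon A_i\to A_l$ are diagonal morphisms with the same trace pull-back and $K$-theory. Applying Theorem~\ref{T.smalldistance} (respectively Theorem~\ref{T.gensmalldistance}) to this pair produces a block $D$ and a diagonal morphism $\mu\colon A_l\to D$ of arbitrarily small diameter with $\sup_t d_{\mathcal U}^{F}(\pi_{\mu\rho\psi,t},\pi_{\mu\varphi_{i,l},t})$ small over all representations, whence $\mu\rho\psi$ and $\mu\varphi_{i,l}$ are $d_{\mathcal U}^{F}$-close by Corollary~\ref{cor:generalthomsen}. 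Now embed $D$ into $A_m$ (cofinality, $m$ large, $K_0$ of the embedding $\gamma$ chosen so that $K_0(\gamma\mu)=K_0(\varphi_{l,m})$). The key point is that $\gamma\mu\colon A_l\to A_m$ has small diameter and \emph{so does} the tail $\varphi_{l,m}$ for $l$ large --- even though $\varphi_{i,m}$ itself does not, its diameter being bounded below by $\partial(\varphi_i)$ --- and the two have the same trace pull-back and $K$-theory; so Lemma~\ref{lemma:rewriting2}, Lemma~\ref{L.dist1} (its second part for Razak blocks, and Corollary~\ref{cor:onemore} after one further connecting map for generalised ones) and Corollary~\ref{cor:generalthomsen} make $\gamma\mu$ and $\varphi_{l,m}$ $d_{\mathcal U}^{\varphi_{i,l}(F)}$-close. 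Combining the two $d_{\mathcal U}$-estimates, each witnessed by a unitary of $\mathcal U(\widetilde{A_m})$, and using $\varphi_{i,m}=\varphi_{l,m}\circ\varphi_{i,l}$, one finds $u\in\mathcal U(\widetilde{A_m})$ with $\norm{u^*\gamma\mu\rho\psi(a)u-\varphi_{i,m}(a)}$ small for $a\in F$; then $\eta:=\Ad(u^*)\circ\gamma\circ\mu\circ\rho\in\Mor_{\mathcal K}(C,A_m)$ (unitary conjugation preserves the trace and acts trivially on $K_0$) satisfies $\norm{\eta\psi(a)-\varphi_{i,m}(a)}$ small on $F$. Since each use of Corollary~\ref{cor:generalthomsen} costs an application of its modulus $h$, and $h(\varepsilon)\to0$, choosing all intermediate tolerances small enough gives $\norm{\eta\psi(a)-\varphi_{i,k}(a)}<\varepsilon$ with $k=m$, verifying the clause. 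Theorem~\ref{thm:generic} then identifies each algebra with the \Fraisse limit of its class.

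I expect the one-step lifting to be the main obstacle, and within it the requirement that $\eta\psi$ approximate the \emph{specific} diagonal map $\varphi_{i,k}$: because $\varphi_{i,k}$ has diameter bounded below by $\partial(\varphi_i)$ it cannot be reached directly by the small-diameter estimates of §\ref{S.distances}, which forces the detour of straightening against the short tail $\varphi_{l,m}$ on the image $\varphi_{i,l}(F)$. In the stably projectionless generalised setting there is the further subtlety that closeness of the interior point representations fails to control the representations at infinity (Remark~\ref{Rem.GenBad}); this is exactly why the defining sequence was engineered with the $K$-theoretic stable-uniqueness reserve of Corollary~\ref{cor:onemore} (equivalently, with the $p_j$ chosen large in Definition~\ref{defin:Z0}, and the tensor parameters chosen analogously), and why the bookkeeping arranging $K_0(\rho\psi)=K_0(\varphi_{i,l})$ --- using that $K_0(\psi)$ divides $\bar p$ and that each prime dividing $\bar p$ recurs infinitely often among the $K_0(\varphi_j)$ --- is required.
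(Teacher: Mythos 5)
Your overall strategy agrees with the paper's: both prove the theorem by exhibiting the defining inductive sequences (Definitions~\ref{defin:W}, \ref{defin:Z0} and, for the tensor case, a cofinal sequence of generalised Razak blocks with suitably divisible $K$-theory) as \emph{generic} in the sense of Theorem~\ref{thm:generic}, and both reduce the work to the cofinality and one-step-lifting clauses. The difference is in the one-step lifting, and there you have introduced an unnecessary complication driven by a factual error.

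You assert that the diameter of $\varphi_{i,m}$ is \emph{bounded below} by $\partial(\varphi_i)$, and you cite this as the reason $\varphi_{i,m}$ ``cannot be reached directly by the small-diameter estimates of \S\ref{S.distances},'' forcing the two-stage detour through $\gamma\mu$ versus $\varphi_{l,m}$ on $\varphi_{i,l}(F)$. But Lemma~\ref{lemma:diameterfacts}\eqref{diam2} gives exactly the opposite inequality: $\partial(\psi\circ\varphi)\le\partial(\varphi)$, so $\partial(\varphi_{i,m})\le\partial(\varphi_i)$. Better still, combining Lemma~\ref{lemma:diameterfacts}\eqref{diam3} applied to the fixed map $\varphi_{i,j}$ with the fact that $\partial(\varphi_j)\le 1/j$ (and that one may choose the piecewise linear associated maps in Proposition~\ref{prop:bhishansmaps} with controlled slope, so that composites contract), one sees that $\partial(\varphi_{i,k'})\to0$ as $k'\to\infty$. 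That is precisely what the paper's proof uses: choose $k$ with a trace-preserving $\psi'\colon(B,\tau)\to(A_k,\tau_k)$, choose $\delta$ from the modulus $h$ of Corollary~\ref{cor:generalthomsen}, and then take $k'$ large enough that both $\varphi_{i,k'}$ \emph{and} $\varphi_{k,k'}\circ\psi'\circ\psi$ have diameter $<\delta/3$. One application of Lemma~\ref{lemma:rewriting2}, Lemma~\ref{L.dist1} (with Corollary~\ref{cor:onemore} for the generalised case) and Corollary~\ref{cor:generalthomsen} then produces the conjugating unitary directly; $\eta=\Ad(u)\circ\varphi_{k,k'}\circ\psi'$ completes the clause. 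Your route is roughly twice the work (two stable-uniqueness steps, two applications of $h$, extra bookkeeping for the trace on $D$ and the $K$-theory of $\gamma$) and the motivation for it is false. It is probably salvageable as written --- the composite estimate $\gamma\mu\rho\psi\approx\gamma\mu\varphi_{i,l}\approx\varphi_{i,m}$ should close up after inserting transition maps to align the intermediate traces --- but it is not the argument the paper gives and offers no advantage over it.

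Two further, smaller points. First, for $\mathcal Z_0\otimes M_{\bar p}$ you propose tensoring the $\mathcal Z_0$-sequence with a UHF-presentation; the paper instead builds the sequence intrinsically inside the class of generalised Razak blocks, interleaving maps $\varphi_{n,k,p,n/2}$ (for simplicity and unique trace) with maps $\psi_{n,k,p,p}$ (to produce $K_0\cong\mathbb Z[1/p]$) as in Remark~\ref{remark:Z0UHF}. Both can be made to work, but your version requires checking that $\varphi_i\otimes\iota_i$, under the identification $B_{n,k}\otimes M_d\cong B_{n,kd}$, has $K$-theory dividing $\bar p$; the paper avoids this by staying in the building-block category throughout. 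Second, Theorem~\ref{T.smalldistance} as stated produces a \emph{pair} of maps $\psi_1,\psi_2$; they coincide only after normalising the targets and their traces, as in its proof, so ``a single $\mu$'' needs that normalisation spelled out when you invoke it.
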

\begin{proof}
The approach for $\mathcal W$, $\mathcal Z_0$ and tensor products of the form $\mathcal Z_0\otimes M_{\bar p}$ is the same: we show that the sequence defining $\mathcal W$ (Definition~\ref{defin:W}), $\mathcal Z_0$ (Definition~\ref{defin:Z0}) and tensor products of the form $\mathcal Z_0\otimes M_{\bar p}$ (Remark~\ref{remark:Z0UHF}) satisfy the hypotheses of Theorem~\ref{thm:generic} for their respective classes. We only give the details for $\mathcal W$: the proof in the other cases follows exactly the same way.

Consider the sequence $A_i$ given by Definition~\ref{defin:W}, with $\varphi_i\colon A_i\to A_{i+1}$, so that $\mathcal W=\lim(A_i,\varphi_i)$. Let $\tau$ be the unique trace of $\mathcal W$, and $\tau_i$ be the faithful diffuse trace on $A_i$ which is the pullback of $\tau$ via the embedding $\varphi_{i,\infty}=\lim_{j>i}\varphi_{i,j}$. Notice that $\varphi_{i,j}\colon (A_i,\tau_i)\to (A_j,\tau_j)$.

We aim to show that such a sequence satisfies the conditions of Theorem~\ref{thm:generic}. For the first condition, notice that $A_i=A_{n_i,k_i}$ where $n_i=(i-1)!$. Therefore the maps in Proposition~\ref{prop:bhishansmaps}\eqref{emb:c1} give that each Razak block with an associated faithful diffuse trace $(A,\sigma)$ can be embedded in $(A_i,\tau_i)$ (for some $i$) in a trace preserving way. We are left with the second condition. Fix $i\in\NN$ and consider a trace preserving map $\psi\colon(A_i,\tau_i)\to (B,\tau)$ where $B$ is a Razak block and $\tau\in T_{fd}(B)$. Fix a finite set $F\subset A_i$, and let $\varepsilon>0$. Without loss of generality we can assume all functions in $F$ are $1$-Lipschitz.

 We can find $k$ large enough so that there is a trace preserving $\psi'\colon (B,\tau)\to(A_k,\tau_k)$. Let $h$ be the function given by Corollary~\ref{cor:generalthomsen} for $A_i$, and let $\delta>0$ such that $h(\delta)<\varepsilon$. Consider now $k'$ large enough so that both $\varphi_{i,k'}$ and $\varphi_{k,k'}\circ\psi'\circ\psi$ have diameter $<\delta/3$. Notice that both maps pull back $\tau_{k'}$ to $\tau_i$, hence by Lemma~\ref{lemma:rewriting2},
\[
d_\partial(\varphi_{i,k'},\varphi_{k,k'}\circ\psi'\circ\psi)<\delta.
\]
By applying Lemma~\ref{L.dist1}, we have that
\[
\sup_{t\in[0,1]}d_\mathcal U^G(\varphi_{i,k'},\varphi_{k,k'}\circ\psi'\circ\psi)<\delta, 
\]
hence
\[
\sup_{t\in [0,1]\cup\{\infty\}}d_\mathcal U^G(\varphi_{i,k'},\varphi_{k,k'}\circ\psi'\circ\psi)<\delta.
\]
By the choice of $h$, we have that there is $u\in \tilde A_{k'}$ such that
\[
\norm{\varphi_{i,k'}(f)-Ad(u)\circ\varphi_{k,k'}\circ\psi'\circ\psi (f)}<\varepsilon.
\]
The map $Ad(u)\circ\varphi_{k,k'}\circ\psi'$ gives the thesis.

In the case of $\mathcal Z_0$, one first gets a $k'$ large enough and a map $\psi'$ so that 
\[
\sup_{t\in[0,1]}d_\mathcal U^G(\varphi_{i,k'},\varphi_{k,k'}\circ\psi'\circ\psi)<\varepsilon.
\]
Then, using the definition of $p_i$ in Definition~\ref{defin:Z0}, Corollary~\ref{cor:onemore} implies that
\[
\sup_{t\in[0,1]\cup\{\infty_1,\infty_2\}}d_\mathcal U^G(\varphi_{i,k'+1},\varphi_{k,k'+1}\circ\psi'\circ\psi)<\varepsilon, 
\]
and therefore the thesis. The approach to $\mathcal Z_0\otimes M_{\bar p}$ is the same.
\end{proof}

The last class of study is the class $\mathcal K_0$, whose objects are generalised Razak blocks and such that there is no $K$-theory restriction on the maps between building blocks. One can show that the \Fraisse limit of $\mathcal K_0$ is monotracial, simple, and has $K_0$ equal to $\{0\}$. By classification, this limit must be isomorphic to $\mathcal W$. Another approach is to show that the class $\mathcal K_\mathcal W\cup\mathcal K_0$, whose objects are Razak blocks and generalised Razak blocks with an associated diffuse faithful trace, and maps are trace preserving $^*$-homomorphisms, is a \Fraisse class. WPP, CCP for the class $\mathcal K_\mathcal W\cup\mathcal K_0$ can be proved exactly as in Proposition~\ref{prop:WPPCCP}. For the JEP, notice that generalised Razak blocks can be viewed as subalgebras of Razak blocks, by rearranging the blocks via a permutation unitary (specifically, $B_{n,k}$ can be twisted to a subalgebra of $A_{n,2k}$), and that $A_{n,k}\oplus A_{n,k}$ can be viewed as a subalgebra of $B_{n,k}$. The class of Razak blocks is cofinal in the class whose objects are Razak and generalised Razak blocks, $\mathcal K_\mathcal W\cup\mathcal K_0$; therefore, the inductive sequence defining $\mathcal W$ is a generic sequence in this \Fraisse class.

\appendix
\section{Admissible maps}\label{App.Maps}
We conclude this section by showing that the technical definition of admissible embeddings is not needed for our applications of \Fraisse theory. Let $\mathcal K_{\mathcal Z}$ be the category whose objects are pairs $(Z_{p,q},\tau)$ where 
\[
Z_{p,q}=\{f\in C([0,1],M_{p}\otimes M_q)\mid f(0)\in 1\otimes M_q,\,f(1)\in M_p\otimes 1,\,\, p,q\text{ coprime }\}
\]
and $\tau$ is a faithful trace on $Z_{p,q}$. Let $\Mor_{\mathcal K_{\mathcal Z}}$ be the set of all morphisms $\varphi\colon Z_{p,q}\to Z_{p',q'}$ such that there are faithful traces $\sigma\in T(Z_{p,q})$ and $\tau\in T(Z_{p',q'})$ with $\sigma=\tau\circ\varphi$.

Theorems 3.5 and 3.13 in \cite{Masumoto.Real} showed that $\mathcal K_{\mathcal Z}$ is a \Fraisse class and that the Jiang-Su algebra $\mathcal Z$ is its limit. Masumoto then analyzed the structure of $\mathcal K_{\mathcal Z}$-admissible embeddings of $\mathcal Z$ into itself. The following completes his intuition.

\begin{lemma}\label{lemma:tracewitnesses}
Let $B$ be a one-dimensional NCCW complex as in Proposition\ref{prop:thomsen}, whose spectrum is Hausdorff (for example, $B=Z_{p,q}$ or $B=A_{n,k}$). Let $F\subset B$ be finite, $\varepsilon>0$ and $\sigma\in T_f(B)$. Then there is a finite set $G\subseteq B$ and a $\delta>0$ such that whenever $\tau\in T_{fd}(B)$ satisfies $|\tau(f)-\sigma(f)|<\delta$ for all $f\in G$, there is 
\[
\varphi\colon (B,\sigma)\to(B,\tau)
\]
such that
\[
\norm{\varphi(a)-a}<\varepsilon,\,\,a\in F.
\]
\end{lemma}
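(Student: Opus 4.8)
The plan is to reduce to the case $B = A_{n,k}$ (or more precisely to a one-dimensional NCCW complex with Hausdorff spectrum) and then to produce $\varphi$ as a transition map $\varphi_{\sigma \mapsto \tau}$ of the type constructed in Proposition~\ref{prop:onetoanother}, controlling its distance from the identity by Proposition~\ref{prop:measuring}. The key observation is that since the spectrum of $B$ is Hausdorff, it is a compact metric space homeomorphic to $[0,1]$ (with the points at infinity glued appropriately), and $\sigma$ corresponds via the GNS construction to a faithful Borel probability measure $\mu_\sigma$ on this space. The optimal matching distance $\bb$ metrises the $w^*$-topology on faithful diffuse measures, and the crucial point is that $\bb(\mu_\sigma, \mu_\tau)$ can be made small by requiring $\tau$ to agree with $\sigma$ (to within $\delta$) on a suitable finite set $G$: indeed, since the spectrum is a compact metric space, the $w^*$-topology on $\mathcal M(\hat B)$ is captured by finitely many test functions up to any prescribed accuracy, and these test functions can be realised as (real parts of matrix entries of) elements of $B$.

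The steps I would carry out, in order, are as follows. First, fix a homeomorphism of $\hat B$ with $[0,1]$ and identify traces with measures, as in \S\ref{subsection:blocks}; note $\sigma \leftrightarrow \mu_\sigma \in \mathcal M_f(\hat B)$. Second, invoke Proposition~\ref{prop:onetoanother}: for any $\tau \in T_{fd}(B)$ there is a transition map $\varphi_{\sigma \mapsto \tau}\colon (B,\sigma)\to (B,\tau)$, and by Proposition~\ref{prop:measuring} we have $d_\partial(\mathrm{Id}, \varphi_{\sigma\mapsto\tau}) \le \bb(\mu_\sigma,\mu_\tau)$; combined with Lemma~\ref{L.dist1} (all $f\in F$ may be assumed $1$-Lipschitz after enlarging $F$, since $F$ is finite and each element is uniformly continuous — actually one works with a Lipschitz constant $L$ for the finite set $F$), this gives $\norm{\varphi_{\sigma\mapsto\tau}(a) - a} \le L \cdot \bb(\mu_\sigma,\mu_\tau)$ for $a\in F$. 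Third — the technical heart — show there is a finite $G \subseteq B$ and $\delta>0$ such that $|\tau(g)-\sigma(g)| < \delta$ for all $g \in G$ forces $\bb(\mu_\sigma,\mu_\tau) < \varepsilon/L$: this is where I would use that $\hat B$ is a fixed compact metric space, cover it by finitely many small balls, choose positive contractions $g_j \in B$ supported near these balls and equal to $1$ on slightly smaller balls (a standard partition-of-unity argument available since $\hat B$ is Hausdorff and $B$ separates points of $\hat B$), and observe that controlling $\tau(g_j)$ controls $\mu_\tau$ of the corresponding open sets, hence controls $\bb(\mu_\sigma,\mu_\tau)$ by the definition of the optimal matching distance (quantifying over a finite cover of open sets suffices up to the mesh of the cover, cf.\ the reduction to intervals for $[0,1]$). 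Fourth, set $\varphi = \varphi_{\sigma \mapsto \tau}$ and conclude.

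The main obstacle I anticipate is the third step: carefully engineering the finite set $G$ and the threshold $\delta$ so that closeness of traces on $G$ genuinely translates into $\bb$-closeness of the associated measures. The subtlety is that $\bb$ is defined by a supremum over \emph{all} open sets, so one needs the finite cover to be fine enough that any open set $U$ is approximated (in the $U_r$ sense) by a union of cover elements with $r$ small; this requires choosing the $g_j$ carefully (with overlapping supports and an accompanying ``inner'' and ``outer'' bracketing of each basic open set), and then the elementary but slightly fiddly estimate that $\mu_\tau(U) \le \mu_\sigma(U_r) $ and symmetrically. Since $\hat B$ for the algebras in question is either $[0,1]$ or $[0,1]$ with one or two extra glued points, this is entirely concrete, and the reduction-to-intervals remark after the definition of $\bb$ in \S\ref{S.distances} does most of the work; the faithfulness of $\sigma$ (hence strict positivity of $\mu_\sigma$ on nonempty open sets) is what prevents degeneracies. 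One also needs to check that $\varphi_{\sigma\mapsto\tau}$ lands in $\Mor$, i.e.\ is genuinely trace-preserving from $(B,\sigma)$ to $(B,\tau)$ — but that is exactly the content of Proposition~\ref{prop:onetoanother}, so no extra work is needed there.
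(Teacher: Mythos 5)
Your plan is essentially the paper's (very terse) proof: take $\varphi$ to be the transition map $\varphi_{\sigma\mapsto\tau}$, observe that diffuseness of $\tau$ alone makes the associated map $\xi$ continuous, and invoke the fact --- left by the paper as ``known to experts'' --- that $\bb$ metrises the weak-$*$ topology on faithful Borel probability measures on $[0,1]$; your step three correctly identifies and spells out this last ingredient, which is the only real work, and the finite-cover/partition-of-unity argument you sketch is exactly how one verifies it. Two small corrections to incorporate: since the lemma only assumes $\sigma\in T_f(B)$ and not $\sigma\in T_{fd}(B)$, Proposition~\ref{prop:onetoanother} does not literally apply --- the quantile transform $\xi$ may have flat intervals over the atoms of $\mu_\sigma$, so $\varphi$ is a trace-preserving injective endomorphism rather than an automorphism, but $\xi$ is still continuous because $\tau$ is diffuse, and that is all you need. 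Also, you do not need Lemma~\ref{L.dist1} here (it controls only the \emph{pointwise unitary} distance between diagonal maps, not the norm distance): since the transition map is literally $\varphi(f)(t)=f(\xi(t))$ with no conjugating unitary, the estimate $\norm{\varphi(a)-a}\le L\sup_{t}\abs{\xi(t)-t}\le L\cdot\bb(\mu_\sigma,\mu_\tau)$ for $L$-Lipschitz $a$ is immediate, with no appeal to any Thomsen-type result.
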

\begin{proof}
Given $\tau$, let $\varphi$ be the transition map $\varphi_{\sigma\mapsto\tau}$ of Proposition~\ref{prop:onetoanother}. Note that diffuseness of $\tau$ is enough to ensure continuity. It is known to experts, and straightforward to verify, that $\mathfrak{b}$ provides a metrisation of the $w^*$-topology on faithful measures on $[0,1]$. The thesis follows.
\end{proof}

\begin{theorem}\label{thm:alladmissibleZ}
Let $\psi\colon \mathcal Z\to \mathcal Z$ be a nonzero $^*$-homomorphism. Then $\varphi$ is $\mathcal K_{\mathcal Z}$-admissible.
\end{theorem}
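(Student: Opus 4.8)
The plan is to verify the criterion from Remark~\ref{remark:admissible} that recognises $\psi\circ\varphi_{i,\infty}\colon Z_i\to\mathcal Z$ as a $\mathcal K_{\mathcal Z}$-admissible embedding; since admissibility is detected on the building blocks (the last item of that remark), this gives admissibility of $\psi$ itself. Write $\mathcal Z=\lim(Z_j,\varphi_j)$ for the generic sequence of \cite{Masumoto.Real}, let $\tau_{\mathcal Z}$ be the unique trace of $\mathcal Z$ and let $\tau_j\in T_{fd}(Z_j)$ be its pullback along $\varphi_{j,\infty}$, so that $\varphi_{i,j}\colon(Z_i,\tau_i)\to(Z_j,\tau_j)$. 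Concretely, we must show: for each finite $F\subseteq Z_i$ and $\varepsilon>0$ there are $j$ and $\vartheta\in\Mor_{\mathcal K_{\mathcal Z}}$, $\vartheta\colon(Z_i,\tau_i)\to(Z_j,\tau_j)$, with $\norm{\varphi_{j,\infty}\circ\vartheta(a)-\psi\circ\varphi_{i,\infty}(a)}<\varepsilon$ for $a\in F$. First note $\psi$ is unital: $\psi(1)$ is a projection, $\mathcal Z$ is simple with no projections other than $0$ and $1$, and $\psi\neq0$, so $\psi(1)=1$. Hence $\tau_{\mathcal Z}\circ\psi=\tau_{\mathcal Z}$ by uniqueness of the trace, so $\psi\circ\varphi_{i,\infty}\colon(Z_i,\tau_i)\to(\mathcal Z,\tau_{\mathcal Z})$ is trace preserving, and $\psi$ is injective.

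\textbf{Producing the morphism.} Fix $i,F,\varepsilon$; assume the elements of $F$ are $1$-Lipschitz and $1\in F$. Apply Lemma~\ref{lemma:tracewitnesses} to $(Z_i,\tau_i,F,\varepsilon/3)$ to obtain a finite $G\supseteq F$ and $\delta>0$ (enlarging $G$ if necessary so that it contains enough positive test functions to detect injectivity below). As a one-dimensional NCCW complex, $Z_i$ is semiprojective, so $\psi\circ\varphi_{i,\infty}$ lifts approximately through the inductive system: for a suitable $j$ there is a $^*$-homomorphism $\rho\colon Z_i\to Z_j$ with $\norm{\varphi_{j,\infty}\circ\rho(a)-\psi\circ\varphi_{i,\infty}(a)}$ as small as we wish on $G$, say $<\min\{\varepsilon/3,\delta\}$. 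Since $1\in G$ and $Z_j$ has only the projections $0,1$, the projection $\rho(1)$ lies within distance $<1$ of $1_{Z_j}$, whence $\rho(1)=1_{Z_j}$: $\rho$ is unital, so each $\pi_{\rho,t}$ is a unital representation of $Z_i$ and a point-norm perturbation puts the eigenvalue paths of $\rho$ into standard form (as in \cite[Proposition 3.5]{Masumoto.FraisseZ}, and keeping the displayed bound intact), so we may assume $\rho$ is diagonal with \emph{finite-to-one} associated maps $\xi_1\le\dots\le\xi_J$. Moreover $\rho$ is injective: being unital, a nontrivial kernel would exhibit a proper quotient of $Z_i$ inside $Z_j$, and every proper quotient of a (generalised) dimension-drop algebra carries a nontrivial projection, which $Z_j$ does not. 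Therefore $\sigma'':=\tau_j\circ\rho\in T_{fd}(Z_i)$ — faithful from injectivity of $\rho$, diffuse from finite-to-oneness of the $\xi_l$ and diffuseness of $\tau_j$. Since $\sigma''(a)=\tau_{\mathcal Z}(\varphi_{j,\infty}\rho(a))$ is within $\delta$ of $\tau_{\mathcal Z}(\psi\varphi_{i,\infty}(a))=\tau_i(a)$ on $G$, Lemma~\ref{lemma:tracewitnesses} supplies the transition map $\gamma=\varphi_{\tau_i\mapsto\sigma''}\colon(Z_i,\tau_i)\to(Z_i,\sigma'')$ of Proposition~\ref{prop:onetoanother} with $\norm{\gamma(a)-a}<\varepsilon/3$ for $a\in F$.

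\textbf{Conclusion.} Put $\vartheta:=\rho\circ\gamma$. Then $\tau_j\circ\vartheta=\sigma''\circ\gamma=\tau_i$, so $\vartheta$ is a trace-preserving $^*$-homomorphism between objects of $\mathcal K_{\mathcal Z}$ and hence $\vartheta\in\Mor_{\mathcal K_{\mathcal Z}}$; and, $\varphi_{j,\infty}$ being isometric and $\rho$ contractive,
\[
\norm{\varphi_{j,\infty}\circ\vartheta(a)-\psi\circ\varphi_{i,\infty}(a)}\le\norm{\gamma(a)-a}+\norm{\varphi_{j,\infty}\rho(a)-\psi\varphi_{i,\infty}(a)}<\varepsilon/3+\varepsilon/3<\varepsilon
\]
for $a\in F$. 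This verifies the criterion, so $\psi\circ\varphi_{i,\infty}$ is admissible for every $i$ and $\psi$ is $\mathcal K_{\mathcal Z}$-admissible.

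\textbf{The main obstacle.} I expect the delicate point to be the middle step: extracting from semiprojectivity a lift $\rho$ that is simultaneously unital, injective, and (up to a point-norm perturbation) diagonal with finite-to-one associated maps, so that its pull-back of $\tau_j$ lies in $T_{fd}(Z_i)$ and can be matched to the prescribed trace $\tau_i$ by the near-identity transition map of Lemma~\ref{lemma:tracewitnesses}; everything else is bookkeeping with Remark~\ref{remark:admissible}. One can also bypass the lift altogether: $\mathcal Z$ is strongly self-absorbing, so $\psi$ is approximately unitarily equivalent to $\id_{\mathcal Z}$ through unitaries of $\mathcal U(\mathcal Z)$; each inner automorphism $\Ad(u)$, $u\in\mathcal U(\mathcal Z)$, is $\mathcal K_{\mathcal Z}$-admissible because $u$ is approximable by $\varphi_{j,\infty}(u_j)$ with $u_j\in\mathcal U(Z_j)$ and $\Ad(u_j)\circ\varphi_{i,j}\in\Mor_{\mathcal K_{\mathcal Z}}$ (it preserves $\tau_i$), and the criterion of Remark~\ref{remark:admissible} for $\psi$ then follows by a triangle inequality.
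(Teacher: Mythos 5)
Your proof follows essentially the same route as the paper's: reduce via Remark~\ref{remark:admissible} to showing $\psi\circ\varphi_{i,\infty}$ is admissible, use semiprojectivity (Blackadar) to produce an approximate lift $\rho\colon Z_i\to Z_j$, observe $\rho$ may be taken diagonal with finite-to-one associated maps (so that $\tau_j\circ\rho\in T_{fd}(Z_i)$), and then correct the trace with a near-identity transition map supplied by Lemma~\ref{lemma:tracewitnesses}. The only cosmetic differences are that the paper phrases the core of the argument as a standalone claim about arbitrary trace-preserving $\pi\colon(Z_{p,q},\sigma)\to(\mathcal Z,\mathrm{tr}_{\mathcal Z})$ and deduces diagonality of $\rho$ from divisibility ($pq\mid p_iq_i$, citing Masumoto), whereas you deduce it from unitality plus an eigenvalue-path perturbation; both lead to the same conclusion, and your added explicit check of injectivity of $\rho$ (hence faithfulness of $\sigma''$) is a harmless elaboration of something the paper leaves implicit in its ``twiddling'' step.
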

\begin{proof}
Note that $\psi$ is unital and injective (as $\mathcal Z$ is projectionless and simple). Let $tr_\mathcal Z$ be the unique trace on $\mathcal Z$ and write $(\mathcal Z,tr_\mathcal Z)=\lim((Z_{p_i,q_i},\sigma_i),\varphi_i)$ with $\varphi_i\colon (Z_{p_i,q_i},\sigma_i)\to(Z_{p_{i+1},q_{i+1}},\sigma_{i+1})$ and $\sigma_i\in T_{fd}(Z_{p_i,q_i})$, making sure that every $n$ eventually divides $p_iq_i$. 
Let 
\[
\varphi_{i,\infty}\colon (Z_{p_i,q_i},\sigma_i)\to(\mathcal Z,tr_\mathcal Z)
\]
be defined as $\varphi_{i,\infty}=\lim_{j>i}\varphi_{i,j}$ where, for $i<j$, 
\[
\varphi_{i,j}=\varphi_{j-1}\circ\varphi_i.
\]

\begin{claim}
Let $\sigma\in T_f(Z_{p,q})$ and $\pi\colon (Z_{p,q},\sigma)\to(\mathcal Z,tr_{\mathcal Z})$. Let $F\subset Z_{p,q}$ be finite and let $\varepsilon>0$. Then there is a natural number $i$ and a trace preserving map
\[
\pi'\colon (Z_{p,q},\sigma)\to (Z_{p_i,q_i},\sigma_i)
\]
such that
\[
\norm{\varphi_{i,\infty}\circ\pi'(a)-\pi(a)}<\varepsilon, \,\, a\in F.
\]
In particular, $\pi$ is $\mathcal K_{\mathcal Z}$-admissible.
\end{claim}
\begin{proof}
We will assume $F$ is made of contractions. Obtain $G$ and $\delta<\varepsilon$ from Lemma~\ref{lemma:tracewitnesses} applied to $F$, $\varepsilon$ and $\sigma$. Since $Z_{p,q}$ is semiprojective, and by \cite[Theorem~3.1]{Blackadar.Shape} there is $i$ and a $^*$-homomorphism $\rho\colon Z_{p,q}\to Z_{p_i,q_i}$ such that 
\[
\norm{\varphi_{i,\infty}\circ\rho (a)-\pi(a)}<\delta/2, \,\, a\in G\cup F.
\]
We can also suppose that $pq$ divides $p_iq_i$ and therefore, by the first paragraph after the proof of \cite[Proposition~3.2]{Masumoto.Real}, $\rho$ is diagonal. Let $\tilde\sigma=\sigma_i\circ\rho$. If $\tilde\sigma$ is diffuse, let $\tilde{\tilde\sigma}=\tilde\sigma$. Otherwise, by twiddling the continuous functions associated to $\rho$ so that they are finite-to-one, we can construct $\rho'\colon Z_{p,q}\to Z_{p_i,q_i}$ such that $\norm{\rho(a)-\rho'(a)}<\delta/2$ for $a\in G\cup F$, and such that the map $\tilde{\tilde\sigma}$ defined as $\tilde{\tilde\sigma}=\sigma_i\circ\rho'$ is faithful and diffuse. Note that, for $a\in F\cup G$,
\[
|\sigma(a)-\tilde{\tilde\sigma}(a)|=|tr_\mathcal Z(\pi(a))-\sigma_i(\rho'(a))|\leq \delta/2+|tr_\mathcal Z(\pi(a))-\sigma_i(\rho(a))|.
\]
Since $\varphi_{i,\infty}\colon (Z_{p_i,q_i},\sigma_i)\to(\mathcal Z,tr_\mathcal Z)$ and $\norm{\varphi_{i,\infty}\circ\rho(a)-\pi(a)}<\delta/2$, we have that $|\sigma(a)-\tilde{\tilde\sigma}(a)|<\delta$. Applying Lemma~\ref{lemma:tracewitnesses}, we can find a $\psi\colon (Z_{p,q},\sigma)\to(Z_{p,q},\tilde{\tilde\sigma})$ with $\norm{\psi(a)-a}<\varepsilon$. Then, $\pi'=\psi\circ\rho'$ satisfies the thesis.
\end{proof}

Since every embedding $\varphi\colon (Z_{p,q},\sigma)\to(\mathcal Z,tr_\mathcal Z)$ is $\mathcal K_{\mathcal Z}$-admissible, then so is $\psi\restriction(\varphi_{i,\infty}(Z_{p_i,q_i}),tr_\mathcal Z)$. By Remark~\ref{remark:admissible}, this suffices.
\end{proof}

We now prove the correspondent of Theorem~\ref{thm:alladmissibleZ} for $\mathcal W$ and $\mathcal{Z}_0$. The proof is necessarily slightly different due to the absence of the unit, but the strategy is similar.  We appeal to classification machinery (none of which, we again stress, is needed in the main body of the article), namely the following consequence of \cite[Theorem 1.0.1, Proposition 6.1.1, Proposition 6.2.3]{Robert:2010qy}.

\begin{theorem}[Robert] \label{thm:robert}
Let $A$ and $B_i$, $i\in\mathbb{N}$, be one dimensional NCCW complexes with trivial $K_1$, and suppose that there are connecting maps $\varphi_i:B_i\to B_{i+1}$ such that $B=\lim(B_i,\varphi_i)$ is simple and has a unique trace $tr_B$. Then for every $\sigma\in T_{f}(A)$, there exists $\varphi:(A,\sigma)\to (B,tr_B)$. If $\psi:(A,\sigma)\to (B,tr_B)$ is another such map with $K_0(\varphi)=K_0(\psi)$, then $\varphi$ and $\psi$ are approximately unitarily equivalent.
\end{theorem}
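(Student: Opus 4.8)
Since the statement is a specialisation of Robert's classification of inductive limits of one-dimensional NCCW complexes of stable rank one to a simple monotracial target, the plan is to deduce it directly from the existence and uniqueness results quoted from \cite{Robert:2010qy}. The relevant invariant is the refined Cuntz semigroup functor used there, together with its natural pairing with traces. For a one-dimensional NCCW complex with trivial $K_1$ this invariant is computed explicitly in \cite[\S2--3]{Robert:2010qy}: it splits into a ``compact'' part, which is $K_0$, and a ``soft'' part assembled from lower-semicontinuous functions on the trace space. For the simple monotracial limit $B$ the soft part collapses to a copy of $(0,\infty]$ carrying the single functional $tr_B$, while the compact part is $K_0(B)$.

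For existence, $A$ lies in the source class of \cite[Proposition 6.1.1]{Robert:2010qy}, so it suffices to produce an abstract morphism between the refined Cuntz semigroups that is compatible with the trace pairings, and then lift it. A faithful trace $\sigma\in T_f(A)$ induces a faithful functional on the Cuntz semigroup of $A$; one defines the Cuntz-semigroup morphism so as to send this functional to the one determined by $tr_B$ and to act on the compact ($K_0$-)part by a chosen homomorphism $K_0(A)\to K_0(B)$ (subject only to the obvious compatibility with the trace data, which is automatic here since the target has a unique trace). That this assignment preserves order, suprema of increasing sequences and the compact--soft decomposition follows from faithfulness of $\sigma$ (no nonzero positive element is sent to $0$) together with the explicit form of the two semigroups. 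Robert's existence theorem then supplies a $^*$-homomorphism $\varphi\colon A\to B$ inducing this morphism; by construction $tr_B\circ\varphi=\sigma$, that is, $\varphi\colon(A,\sigma)\to(B,tr_B)$.

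For uniqueness, suppose $\varphi,\psi\colon(A,\sigma)\to(B,tr_B)$ induce the same homomorphism on $K_0$. Each induced Cuntz-semigroup morphism must carry the functional attached to $\sigma$ to the \emph{unique} functional $tr_B$ of $B$, so they agree on the soft part; they agree on the compact part by hypothesis; and since the refined Cuntz semigroup of a one-dimensional NCCW complex with trivial $K_1$ is determined by exactly these two kinds of data, the two morphisms coincide. The uniqueness statement \cite[Proposition 6.2.3]{Robert:2010qy}, applicable because $B$ has stable rank one (being a simple inductive limit of one-dimensional NCCW complexes), then yields that $\varphi$ and $\psi$ are approximately unitarily equivalent.

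The only point demanding genuine care --- the sole content beyond invoking \cite{Robert:2010qy} --- is the verification that, once its value on the unique trace functional and its $K_0$-component have been fixed, a Cuntz-semigroup morphism into the monotracial algebra $B$ is completely determined, with no residual freedom. This is where the explicit description of the Cuntz semigroup of point--line algebras, and of a simple monotracial inductive limit of them, is needed; I expect this bookkeeping, rather than any conceptual difficulty, to be the main obstacle.
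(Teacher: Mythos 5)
The paper does not prove this theorem; it simply records it as ``the following consequence of [Theorem 1.0.1, Proposition 6.1.1, Proposition 6.2.3]'' of Robert's paper, and your proposal reconstructs precisely that derivation: $\mathrm{Cu}^{\sim}$ is the invariant, Proposition 6.1.1 supplies existence, Proposition 6.2.3 supplies uniqueness (valid since $B$ has stable rank one as a simple limit of one-dimensional NCCW complexes), and the one genuinely substantive step --- that a $\mathrm{Cu}^{\sim}$-morphism into a simple, monotracial, $K_1$-trivial limit is pinned down once its $K_0$-component and its effect on the unique trace functional are fixed --- is exactly where the content of Robert's computation of the invariant enters. Your outline is correct and follows the same route the paper intends.
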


In the cases of interest (that is, $B=\mathcal{W}$ or $B=\mathcal{Z}_0$), the existence part of Theorem~\ref{thm:robert} follows from local existence (Propositions~\ref{prop:bhishansmaps} and \ref{prop:genmaps}), local uniqueness (Theorems~\ref{T.smalldistance}, \ref{T.gensmalldistance} and Corollary~\ref{cor:generalthomsen}), and an intertwining argument. However, the uniqueness statement is not quite accessible by our results because we cannot ensure that the maps obtained by the application of \cite[Theorem~3.1]{Blackadar.Shape} are trace preserving for any traces.

\begin{theorem}\label{thm:alladmissibleW}
Let $\psi\colon \mathcal W\to\mathcal W$ be a trace preserving $^*$-homomorphism. Then $\psi$ is $\mathcal K_{\mathcal W}$-admissible.
\end{theorem}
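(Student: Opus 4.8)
The plan is to follow the blueprint of the proof of Theorem~\ref{thm:alladmissibleZ}, but to replace the semiprojectivity-and-perturbation step --- which in the Jiang--Su case rests on Masumoto's observation that building-block maps of matching size are automatically diagonal --- by an appeal to Robert's classification, Theorem~\ref{thm:robert}. First I would write $\mathcal W=\lim((A_i,\tau_i),\varphi_i)$ as in Definition~\ref{defin:W}, with $\tau_i$ the pullback of the unique trace $tr_{\mathcal W}$ of $\mathcal W$ along $\varphi_{i,\infty}$, so that every $\varphi_{i,j}\colon(A_i,\tau_i)\to(A_j,\tau_j)$ and every $\varphi_{i,\infty}\colon(A_i,\tau_i)\to(\mathcal W,tr_{\mathcal W})$ is trace preserving and each $(A_i,\tau_i)\in\Obj_{\mathcal K_{\mathcal W}}$. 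By the last two preservation properties in Remark~\ref{remark:admissible}, it then suffices to show that every trace preserving embedding $\pi\colon(A,\sigma)\to(\mathcal W,tr_{\mathcal W})$, with $A$ a Razak block and $\sigma\in T_{fd}(A)$, is $\mathcal K_{\mathcal W}$-admissible; indeed, applying this to the (trace preserving, hence nonzero, hence injective since $\mathcal W$ is simple and projectionless) restrictions $\psi\circ\varphi_{i,\infty}$ and then invoking the last bullet of Remark~\ref{remark:admissible} gives admissibility of $\psi$. By the third bullet of Remark~\ref{remark:admissible}, admissibility of $\pi$ amounts to the following: for each finite $F\subset A$ and $\varepsilon>0$ there are $j\in\NN$ and $\pi'\in\Mor_{\mathcal K_{\mathcal W}}$, $\pi'\colon(A,\sigma)\to(A_j,\tau_j)$, with $\norm{\varphi_{j,\infty}\circ\pi'(a)-\pi(a)}<\varepsilon$ for $a\in F$.

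To produce such a $\pi'$, I would first build a trace preserving embedding $\iota\colon(A,\sigma)\to(A_i,\tau_i)$ for some $i$, exactly as in the verification of the first hypothesis of Theorem~\ref{thm:generic} (compose maps of Proposition~\ref{prop:bhishansmaps}\eqref{emb:c0} to match the first index, then maps of Proposition~\ref{prop:bhishansmaps}\eqref{emb:c1} to match the second, followed by a transition map), and set $\pi_0=\varphi_{i,\infty}\circ\iota$, a trace preserving embedding $(A,\sigma)\to(\mathcal W,tr_{\mathcal W})$ that by construction factors through the defining sequence. Since $K_0(A)=0$ by Proposition~\ref{prop:Ktheory}, one trivially has $K_0(\pi)=K_0(\pi_0)$, so Theorem~\ref{thm:robert} yields that $\pi$ and $\pi_0$ are approximately unitarily equivalent in $\widetilde{\mathcal W}$. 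Because $\mathcal W$ is an inductive limit of Razak blocks (one-dimensional NCCW complexes of stable rank one) it has stable rank one, hence so does $\widetilde{\mathcal W}$; as $K_1(\widetilde{\mathcal W})=0$, every unitary of $\widetilde{\mathcal W}=\lim\widetilde{A_i}$ is approximated by unitaries coming from $\bigcup_j\widetilde{A_j}$. So I may choose $j\ge i$ and a unitary $v\in\widetilde{A_j}$, identified with $\widetilde{\varphi_{j,\infty}}(v)\in\widetilde{\mathcal W}$, with $\norm{v\pi(a)v^*-\pi_0(a)}<\varepsilon$ for $a\in F$, and set $\pi'=\Ad(v^*)\circ\varphi_{i,j}\circ\iota$. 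Since $A_j$ is an ideal in $\widetilde{A_j}$, $\pi'$ maps $A$ into $A_j$; it is trace preserving because $\iota$ and $\varphi_{i,j}$ are and inner automorphisms preserve traces, so $\pi'\in\Mor_{\mathcal K_{\mathcal W}}$; and $\varphi_{j,\infty}\circ\pi'=\Ad(v^*)\circ\pi_0$, whence $\norm{\varphi_{j,\infty}\circ\pi'(a)-\pi(a)}=\norm{\pi_0(a)-v\pi(a)v^*}<\varepsilon$ on $F$, as required.

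The hard part is the uniqueness input. Unlike the Jiang--Su case, one cannot lift $\pi$ to a $^*$-homomorphism into a building block by semiprojectivity of Razak blocks and then correct the trace, since there is no control over the trace --- or even the diagonalisability --- of such a lift; this is precisely the gap flagged in the paragraph before Theorem~\ref{thm:robert}, and it is what forces the use of Robert's classification for the approximate unitary equivalence. A secondary point, routine but worth spelling out, is the transfer of the conjugating unitary from $\widetilde{\mathcal W}$ to a single building block $\widetilde{A_j}$ (handled above via stable rank one of $\widetilde{\mathcal W}$ and triviality of $K_1$, or alternatively via a one-sided intertwining built from local uniqueness). The arguments for $\mathcal Z_0$ and $\mathcal Z_0\otimes M_{\bar p}$ would be identical, except that the relevant generalised Razak block has $K_0\cong\zet$, so one must first choose $\iota$ --- now using Proposition~\ref{prop:genmaps} --- with $K_0(\iota)=K_0(\pi)$ before invoking Theorem~\ref{thm:robert}.
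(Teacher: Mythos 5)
Your proof is correct and follows essentially the same strategy as the paper: both reduce to showing that a trace-preserving embedding $\pi\colon(A,\sigma)\to(\mathcal W,tr_\mathcal W)$ from a Razak block is $\mathcal K_\mathcal W$-admissible, both invoke Theorem~\ref{thm:robert} (Robert's classification) to obtain approximate unitary equivalence between $\pi$ and a reference embedding, and both conclude via Remark~\ref{remark:admissible}. The only cosmetic difference is that the paper rebuilds a fresh generic sequence whose zeroth term is $(A_{n,k},\sigma)$ and appeals to uniqueness of the \Fraisse limit, whereas you use the original defining sequence and an explicit embedding $\iota\colon A\to A_i$ from Proposition~\ref{prop:bhishansmaps}; this buys nothing but clarity. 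You do spell out one step the paper treats rather telegraphically — pushing the conjugating unitary from $\widetilde{\mathcal W}$ back to some $\widetilde{A_j}$ so that the approximating map lands in a building block, as Remark~\ref{remark:admissible} requires. That transfer is standard (any unitary in a unital inductive limit is approximately a unitary from a finite stage via polar decomposition; no appeal to stable rank one or $K_1$ is actually needed), but making it explicit is a genuine improvement on the paper's ``as above, thanks to Remark~\ref{remark:admissible}, this is sufficient.''
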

\begin{proof}
Let $n,k\in\NN$, $\sigma\in T_{fd}(A_{n,k})$, $\pi\colon (A_{n,k},\sigma)\to(\mathcal W,tr)$ (where $tr_\mathcal{W}$ is the unique trace on $\mathcal W$). Let $F\subset A_{n,k}$ be finite and let $\varepsilon>0$. By Proposition~\ref{prop:bhishansmaps} and the uniqueness of $\mathcal{W}$ as the \Fraisse limit of the class $\mathcal{K}_\mathcal{W}$, there is a sequence $A_{n_i,k_i}$, for $i\in\mathbb{N}$, traces $\sigma_i\in T_{fd}(A_{n_i,k_i})$ and trace preserving maps 
\[
\varphi_i\colon (A_{n_i,k_i},\sigma_i)\to (A_{n_{i+1},k_{i+1}},\sigma_{i+1})
\]
such that
\begin{itemize}
\item $n=n_0$, $k=k_0$ and $\sigma=\sigma_0$
\item $\lim (A_{n_i,k_i},\varphi_i)=\mathcal W$
\item if $\varphi_{i,\infty}\colon A_{n_i,k_i}\to\mathcal W$ is defined as $a\mapsto\lim_{j\geq i}\varphi_{ij}(a)$, then 
\[
\varphi_{i,\infty}(a)\colon (A_{n_i,k_i},\sigma_i)\to (\mathcal W,tr_\mathcal{W}).
\]
\end{itemize}
By Theorem~\ref{thm:robert}, there is a unitary $u$ in the unitisation of $\mathcal{W}$ such that the map $\varphi$ defined as $\varphi=\Ad(u)\circ\varphi_{0,\infty}$ satisfies
\[
\varphi\colon (A_{n,k},\sigma)\to (\mathcal W,tr_\mathcal{W})
\]
and
\[
\norm{\varphi(a)-\pi(a)}<\varepsilon, \,\,\, a\in F.
\]
As above, thanks to Remark~\ref{remark:admissible}, this is sufficient.
\end{proof}

The following is obtained in exactly the same way.

\begin{theorem}\label{thm:alladmissibleZ0}
Let $\psi\colon \mathcal Z_0\to\mathcal Z_0$ be a trace preserving $^*$-homomorphism. Then $\psi$ is $\mathcal K_1$-admissible. Similarly, for any supernatural number $\bar p$ of infinite type, every trace preserving $^*$-homomorphism $\psi\colon \mathcal{Z}_0\otimes M_{\bar p} \to\mathcal{Z}_0\otimes M_{\bar p}$ is $\mathcal K_{\bar p}$-admissible. \qed
\end{theorem}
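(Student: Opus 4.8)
The plan is to follow the template already laid out for $\mathcal{Z}$ in Theorem~\ref{thm:alladmissibleZ} and for $\mathcal{W}$ in Theorem~\ref{thm:alladmissibleW}, with the obvious bookkeeping changes for the $K$-theoretic constraints present in the classes $\mathcal{K}_1$ and $\mathcal{K}_{\bar p}$. First I would fix a trace preserving $^*$-homomorphism $\psi\colon \mathcal{Z}_0\to\mathcal{Z}_0$; since $\mathcal{Z}_0$ is simple and stably projectionless, $\psi$ is automatically injective (cf.\ Proposition~\ref{prop:Ktheory}\eqref{noprojc2} applied at the level of building blocks, passing to the limit). Using Proposition~\ref{prop:genmaps} and the fact, established in the main body, that $\mathcal{Z}_0$ is the \Fraisse limit of $\mathcal{K}_1$, I would fix a generic inductive sequence $(B_{n_i,k_i},\sigma_i)$ with $\sigma_i\in T_{fd}(B_{n_i,k_i})$ and trace preserving connecting maps $\varphi_i$ (each in $\Mor_{GR,1}$, so $|K_0(\varphi_i)|=1$) whose limit is $(\mathcal{Z}_0,tr_{\mathcal{Z}_0})$, with $\varphi_{i,\infty}\colon (B_{n_i,k_i},\sigma_i)\to(\mathcal{Z}_0,tr_{\mathcal{Z}_0})$. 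By Remark~\ref{remark:admissible}, it suffices to show that for each $i$ the restriction $\psi\rs\varphi_{i,\infty}(B_{n_i,k_i})$ is $\mathcal{K}_1$-admissible, and for that it is enough to prove the analogue of the Claim in Theorem~\ref{thm:alladmissibleZ}: given a finite $F\subseteq B_{n_i,k_i}$ of contractions and $\varepsilon>0$, there is $j$ and a trace preserving $\pi'\colon (B_{n_i,k_i},\sigma_i)\to(B_{n_j,k_j},\sigma_j)$ with $|K_0(\pi')|=1$ such that $\norm{\varphi_{j,\infty}\circ\pi'(a)-(\psi\circ\varphi_{i,\infty})(a)}<\varepsilon$ for $a\in F$.

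The argument for this Claim splits into an \emph{existence} step and a \emph{correction} step. For existence, invoke Theorem~\ref{thm:robert} (Robert): since $\mathcal{Z}_0$ is a simple monotracial limit of one-dimensional NCCW complexes with trivial $K_1$, the composite $\psi\circ\varphi_{i,\infty}\colon(B_{n_i,k_i},\sigma_i)\to(\mathcal{Z}_0,tr_{\mathcal{Z}_0})$ is, up to approximate unitary equivalence, determined by $K_0$; and $\varphi_{i,\infty}$ itself has $|K_0|=1$ because each $\varphi_\ell$ does, while $K_0(\psi)=\pm1$ since $\psi$ is a trace preserving (hence nonzero, hence injective) endomorphism of the algebra $\mathcal{Z}_0$ with $K_0(\mathcal{Z}_0)\cong\zet$ generated by a single class --- so $K_0(\psi\circ\varphi_{i,\infty})=\pm1$. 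Now use semiprojectivity of $B_{n_i,k_i}$ together with \cite[Theorem~3.1]{Blackadar.Shape} to pull $\psi\circ\varphi_{i,\infty}$ back to a $^*$-homomorphism $\rho\colon B_{n_i,k_i}\to B_{n_j,k_j}$ for some $j$ with $\norm{\varphi_{j,\infty}\circ\rho(a)-(\psi\circ\varphi_{i,\infty})(a)}$ small on a suitable finite set $G\cup F$ (where $G,\delta$ come from Lemma~\ref{lemma:tracewitnesses} applied to $B_{n_i,k_i}$, $F$, $\varepsilon$, $\sigma_i$); arranging $n_ik_i\mid n_jk_j$ and $|K_0(\rho)|=1$ (which is forced up to sign once $j$ is large, since $\varphi_{j,\infty}\circ\rho$ and $\psi\circ\varphi_{i,\infty}$ agree on $K_0$ and $\varphi_{j,\infty}$ is injective on $K_0$), one makes $\rho$ diagonal by the same argument as in the $\mathcal{Z}$ case (perturbing to finite-to-one fibre maps, cf.\ Proposition~\ref{prop:tpstarhoms} and Masumoto's observation). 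The correction step is then identical to Theorem~\ref{thm:alladmissibleZ}: set $\tilde{\tilde\sigma}=\sigma_j\circ\rho'$ for a finite-to-one perturbation $\rho'$ of $\rho$ making $\tilde{\tilde\sigma}$ faithful and diffuse, check $|\sigma_i(a)-\tilde{\tilde\sigma}(a)|<\delta$ on $F\cup G$ using that $\varphi_{j,\infty}$ is trace preserving, apply Lemma~\ref{lemma:tracewitnesses} to get a transition map $\theta\colon(B_{n_i,k_i},\sigma_i)\to(B_{n_i,k_i},\tilde{\tilde\sigma})$ with $\norm{\theta(a)-a}<\varepsilon$ (note $K_0(\theta)=1$ by Proposition~\ref{prop:onetoanother}), and put $\pi'=\rho'\circ\theta$.

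The $\mathcal{K}_{\bar p}$ case for $\mathcal{Z}_0\otimes M_{\bar p}$ runs along exactly the same lines, noting that $\mathcal{Z}_0\otimes M_{\bar p}$ is still a simple monotracial limit of generalised Razak blocks with $K_0\cong\zet[1/\bar p]$ (Remark~\ref{remark:Z0UHF}), that Theorem~\ref{thm:robert} applies verbatim, and that the $K$-theory constraint ``$K_0(\varphi)$ divides $\bar p$'' is preserved under composition and under the transition maps of Proposition~\ref{prop:onetoanother} (which have $K_0=1$); the only point to verify is that the pulled-back map $\rho$ from \cite{Blackadar.Shape} can be taken with $K_0(\rho)$ dividing $\bar p$, which again follows since $K_0(\varphi_{j,\infty}\circ\rho)$ must match $K_0(\psi\circ\varphi_{i,\infty})$, a divisor of $\bar p$, and $\varphi_{j,\infty}$ is injective on $K_0$. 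I expect the main obstacle to be precisely this $K$-theoretic bookkeeping: making sure that the map produced by semiprojectivity lands in the correct morphism class ($\Mor_{GR,1}$, resp.\ $\Mor_{GR,\bar p}$) rather than merely being a $^*$-homomorphism, and that the subsequent finite-to-one perturbation and the transition map do not disturb $K_0$ --- but all of this is controlled by Proposition~\ref{prop:Ktheory2}, Proposition~\ref{prop:genmaps}, and the injectivity of $\varphi_{j,\infty}$ on $K_0$, so no genuinely new idea beyond the proof of Theorem~\ref{thm:alladmissibleW} is required, and the statement is obtained ``in exactly the same way''.
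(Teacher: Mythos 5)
Your proposal does invoke Theorem~\ref{thm:robert} (Robert), but only to observe that $\psi\circ\varphi_{i,\infty}$ is determined up to approximate unitary equivalence by its $K_0$; you then abandon this and revert to the semiprojectivity-plus-Blackadar route of Theorem~\ref{thm:alladmissibleZ} (the $\mathcal Z$ case). The paper's actual proof of Theorem~\ref{thm:alladmissibleZ0}, which is declared to be ``obtained in exactly the same way'' as Theorem~\ref{thm:alladmissibleW}, never passes through Blackadar: it uses Robert's \emph{uniqueness} statement directly to produce a unitary $u\in\tilde{\mathcal Z_0}$ with $\Ad(u)\circ\varphi_{i,\infty}$ close to $\psi\circ\varphi_{i,\infty}$ on $F$, and then approximates $u$ by a unitary coming from some $\tilde B_{n_j,k_j}$ to land the admissible approximating map in $\Mor_{GR,1}$ (or $\Mor_{GR,\bar p}$). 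Once you have $K_0(\psi\circ\varphi_{i,\infty}) = K_0(\varphi_{i,\infty})$, Robert's theorem already finishes the job; the detour through \cite[Theorem~3.1]{Blackadar.Shape} is not needed, and in fact introduces a genuine gap.

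The gap is exactly the one the paper flags in the paragraph following Theorem~\ref{thm:robert}: in the nonunital setting, the $^*$-homomorphism $\rho\colon B_{n_i,k_i}\to B_{n_j,k_j}$ obtained from semiprojectivity need not be trace preserving for \emph{any} pair of traces, equivalently (by Proposition~\ref{prop:tpstarhoms}) need not be diagonal. Your repair step --- ``one makes $\rho$ diagonal by $\ldots$ perturbing to finite-to-one fibre maps'' and then setting $\tilde{\tilde\sigma}=\sigma_j\circ\rho'$ --- presupposes that $\rho$ already has point fibres of the form $\diag(\pi_{s_1},\ldots,\pi_{s_j})$; a general $\rho$ from Blackadar may have fibres with zero summands or summands at $\{\infty_1,\infty_2\}$ (cf.\ Remark~\ref{remark:nontracial}), in which case there is no ``finite-to-one perturbation'' that restores diagonality, and $\sigma_j\circ\rho'$ will fail to be a state, so Lemma~\ref{lemma:tracewitnesses} does not apply. (In the unital $\mathcal Z$ case this issue is absent, because unital pullbacks of traces are always traces and Masumoto's observation gives diagonality of the Blackadar map; that is precisely why the $\mathcal W$/$\mathcal Z_0$ appendix results take a different, Robert-based route.) The other claim worth being careful about is that $K_0(\psi)=\pm1$ for a trace-preserving $\psi\colon\mathcal Z_0\to\mathcal Z_0$; your justification via ``$K_0(\mathcal Z_0)\cong\zet$ generated by a single class'' does not by itself pin down the sign or exclude other multipliers, and the paper also uses this implicitly --- so this is a shared omission rather than an error special to your write-up, but it should not be stated as though it were obvious.
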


\normalsize


\begin{thebibliography}{10}

\normalsize
\baselineskip=17pt

\bibitem{BY:Fraissee}
I.~Ben~Yaacov.
\newblock Fra\"\i ss\'e limits of metric structures.
\newblock {\em J. Symb. Log.}, 80(1):100--115, 2015.

\bibitem{Blackadar.Shape}
B.~Blackadar.
\newblock Shape theory for {$\Cstar$}-algebras.
\newblock {\em Math. Scand.}, 56(2):249--275, 1985.

\bibitem{CE.ClassNonunit}
J.~Castillejos and S.~Evington.
\newblock Nuclear dimension of simple stably projectionless {$\Cstar$}-algebras.
\newblock {\em Anal. PDE}, 13(7):2205--2240, 2020.

\bibitem{CETWW.Class}
J.~Castillejos, S.~Evington, A.~Tikuisis, S.~White, and W.~Winter.
\newblock Nuclear dimension of simple {$\Cstar$}-algebras.
\newblock {\em Invent. Math.}, 224(1):245--290, 2021.

\bibitem{Dixmier:1964rt}
J.~Dixmier.
\newblock {\em Les {$\Cstar$}-alg{\`e}bres et leurs repr{\'e}sentations}.
\newblock Cahiers Scientifiques, Fasc. XXIX. Gauthier-Villars \& Cie,
  {\'E}diteur-Imprimeur, Paris, 1964.

\bibitem{EFH:Fraisse}
C.~J. Eagle, I.~Farah, B.~Hart, B.~Kadets, V.~Kalashnyk, and M.~Lupini.
\newblock Fra\"\i ss\'e limits of {$\Cstar$}-algebras.
\newblock {\em J. Symb. Log.}, 81(2):755--773, 2016.

\bibitem{EllGongLinNiu.KK}
G.~A. Elliott, G.~Gong, H.~Lin, and Z.~Niu.
\newblock The classification of simple separable {KK}-contractible {$\Cstar$}-algebras with finite nuclear dimension.
\newblock {\em J. Geom. Phys.}, 158:103861, 51, 2020.

\bibitem{bourbaki}
I.~Farah, B.~Hart, M.~Lupini, L.~Robert, A.~Tikuisis, A.~Vignati, and
  W.~Winter.
\newblock Model theory of {$\Cstar$}-algebras.
\newblock {\em Mem. Amer. Math. Soc.}, 271(1324):viii+127, 2021.

\bibitem{FHS.II}
I.~Farah, B.~Hart, and D.~Sherman.
\newblock Model theory of operator algebras {II}: Model theory.
\newblock {\em Israel J. Math.}, 201:477--505, 2014.

\bibitem{Fraisse}
R.~Fra\"{\i}ss\'{e}.
\newblock Sur l'extension aux relations de quelques propri\'{e}t\'{e}s des
  ordres.
\newblock {\em Ann. Sci. Ecole Norm. Sup. (3)}, 71:363--388, 1954.

\bibitem{Ghasemi.Fraisse}
S.~Ghasemi.
\newblock {Strongly self-absorbing {$\Cstar$}-algebras and Fra\"{\i}ss\'e limits}.
\newblock {\em {Bull. Lond. Math. Soc.}}, 53(3):937--955, 2021.

\bibitem{GongLin.Class2}
G.~Gong and H.~Lin.
\newblock On classification of non-unital amenable simple
  {$\Cstar$}-algebras, {II}.
\newblock {\em J. Geom. Phys.}, 158:103865, 102, 2020.

\bibitem{GongLinNiu.Class1}
G.~Gong, H.~Lin, and Z.~Niu.
\newblock A classification of finite simple amenable {$\mathcal Z$}-stable
  \cstar-algebras, {I}: {$\Cstar$}-algebras with generalized tracial rank
  one.
\newblock {\em C. R. Math. Acad. Sci. Soc. R. Can.}, 42(3):63--450, 2020.

\bibitem{GongLinNiu}
G.~Gong, H.~Lin, and Z.~Niu.
\newblock A classification of finite simple amenable {$\mathcal Z$}-stable
  {$\Cstar$}-algebras, {II}: {$\Cstar$}-algebras with rational
  generalized tracial rank one.
\newblock {\em C. R. Math. Acad. Sci. Soc. R. Can.}, 42(4):451--539, 2020.

\bibitem{Hiai:1989aa}
F.~Hiai and Y.~Nakamura.
\newblock Distance between unitary orbits in von {N}eumann algebras.
\newblock {\em Pacific J. Math.}, 138(2):259--294, 1989.

\bibitem{Hodges1993}
W.~Hodges.
\newblock {\em Model Theory}.
\newblock Cambridge University Press, 1993.

\bibitem{IrwSol.Pseudo}
T.~Irwin and S.~Solecki.
\newblock Projective {F}ra\"iss\'e limits and the pseudo-arc.
\newblock {\em Trans. Amer. Math. Soc.}, 358(7):3077--3096, 2006.

\bibitem{Jacelon:W}
B.~Jacelon.
\newblock A simple, monotracial, stably projectionless {$\Cstar$}-algebra.
\newblock {\em J. Lond. Math. Soc. (2)}, 87(2):365--383, 2013.

\bibitem{Jacelon:2021wa}
B.~Jacelon, K.~R. Strung, and A.~Vignati.
\newblock Optimal transport and unitary orbits in {$\Cstar$}-algebras.
\newblock {\em J. Funct. Anal.}, 281(5):109068, 2021.

\bibitem{KPT}
A.~S. Kechris, V.~G. Pestov, and S.~Todorcevic.
\newblock Fra\"{\i}ss\'{e} limits, {R}amsey theory, and topological dynamics of
  automorphism groups.
\newblock {\em Geom. Funct. Anal.}, 15(1):106--189, 2005.

\bibitem{KubisSol.Gurarij}
W.~Kubi\'{s} and S.~Solecki.
\newblock A proof of uniqueness of the {G}urari\u{\i} space.
\newblock {\em Israel J. Math.}, 195(1):449--456, 2013.

\bibitem{Masumoto.Real}
S.~{Masumoto}.
\newblock A {F}ra\"iss\'e theoretic approach to the {J}iang-{S}u algebra.
\newblock arXiv:1612.00646 [math.OA], December 2016.

\bibitem{Masumoto.FraisseZ}
S.~Masumoto.
\newblock The {J}iang-{S}u algebra as a {F}ra\"{\i}ss\'{e} limit.
\newblock {\em J. Symb. Log.}, 82(4):1541--1559, 2017.

\bibitem{Masumoto.Fraisse2}
S.~Masumoto.
\newblock On a generalized {F}ra\"{\i}ss\'{e} limit construction and its
  application to the {J}iang-{S}u algebra.
\newblock {\em J. Symb. Log.}, 85(3):1186--1223, 2020.

\bibitem{melleray2014extremely}
J.~Melleray and T.~Tsankov.
\newblock Extremely amenable groups via continuous logic.
\newblock arXiv:1404.4590 [math.LO], April 2014.

\bibitem{Nawata:2020vt}
N.~Nawata.
\newblock A characterization of the {R}azak--{J}acelon algebra.
\newblock arXiv:2008.10235 [math.OA], September 2020.

\bibitem{Razak}
S.~Razak.
\newblock On the classification of simple stably projectionless
  {$\Cstar$}-algebras.
\newblock {\em Canad. J. Math.}, 54(1):138--224, 2002.

\bibitem{Robert:2010qy}
L.~Robert.
\newblock Classification of inductive limits of 1-dimensional {NCCW} complexes.
\newblock {\em Adv. Math.}, 231(5):2802--2836, 2012.

\bibitem{Schoretsanitis}
K.~Schoretsanitis.
\newblock {\em Fraisse theory for metric structures}.
\newblock PhD thesis, Thesis (Ph.D.)--University of Illinois at
  Urbana-Champaign, 2007.

\bibitem{ThielWinter}
H.~Thiel and W.~Winter.
\newblock The generator problem for {$\mathcal {Z}$}-stable
  {$\Cstar$}-algebras.
\newblock {\em Trans. Amer. Math. Soc.}, 366(5):2327--2343, 2014.

\bibitem{Thomsen:1992qf}
K.~Thomsen.
\newblock Homomorphisms between finite direct sums of circle algebras.
\newblock {\em Linear and Multilinear Algebra}, 32(1):33--50, 1992.

\bibitem{TWW.QD}
A.~Tikuisis, S.~White, and W.~Winter.
\newblock Quasidiagonality of nuclear {$\Cstar$}-algebras.
\newblock {\em Ann. of Math. (2)}, 185(1):229--284, 2017.

\bibitem{Toms:2007uq}
A.~S. Toms and W.~Winter.
\newblock Strongly self-absorbing {$\Cstar$}-algebras.
\newblock {\em Trans. Amer. Math. Soc.}, 359(8):3999--4029 (electronic), 2007.

\end{thebibliography}
\end{document}